\numberwithin{equation}{section}
\theoremstyle{plain}
\newtheorem{theorem}{Theorem}[section]
\newtheorem*{theorem*}{Theorem}
\newtheorem{proposition}[theorem]{Proposition} 
\newtheorem*{proposition*}{Proposition}
\newtheorem{corollary}[theorem]{Corollary}
\newtheorem*{corollary*}{Corollary}
\newtheorem{lemma}[theorem]{Lemma}     
\newtheorem{definition}[theorem]{Definition}          
\newtheorem{conjecture}[theorem]{Conjecture}
\newtheorem*{conjecture*}{Conjecture}
\newtheorem{claim}[theorem]{Claim}
\theoremstyle{remark}
\newtheorem*{notation}{Notation}
\newtheorem{remark}[theorem]{Remark}
\newtheorem{example}[theorem]{Example}
\newcommand{\calA}{\mathcal{A}}
\newcommand{\calB}{\mathcal{B}}
\newcommand{\calF}{\mathcal{F}}
\newcommand{\calS}{\mathcal{S}}
\newcommand{\bbC}{\mathbb{C}}
\newcommand{\bbF}{\mathbb{F}}
\newcommand{\bbG}{\mathbb{G}}
\newcommand{\bbN}{\mathbb{N}}
\newcommand{\bbP}{\mathbb{P}}
\newcommand{\bbR}{\mathbb{R}}
\newcommand{\bbZ}{\mathbb{Z}}
\newcommand{\str}{\mathrm{str}}
\newcommand{\slrk}{\mathrm{sl.rk}}
\newcommand{\HS}{\mathrm{HS}}
\renewcommand{\HF}{\mathrm{HF}}
\newcommand{\wFC}{\mathrm{wFC}}
\newcommand{\sFC}{\mathrm{sFC}}
\DeclareMathOperator{\codim}{codim}
\DeclareMathOperator{\coeff}{coeff}
\newcommand{\exampleend}{\,\hfill$\spadesuit$}
\newcommand{\remarkend}{\,\hfill$\clubsuit$}
\subjclass[2010]{13A02, 13D02, 13D40, 14N05, 14N15}
\keywords{slice rank; strength; homogeneous polynomials; secant varieties; variety of reducible forms; Fr\"oberg's conjecture}
\begin{document}
\title{{On the strength of general polynomials}}

\author{Arthur Bik}
\address[Arthur Bik]{Mathematisches Institut -- U. Bern, Alpeneggstrasse 22 -  CH-3012 Bern, Switzerland}
\email{arthur.bik@math.unibe.ch}

\author{Alessandro Oneto}
\address[Alessandro Oneto]{Dip. di Matematica -- U. di Trento, Via Sommarive, 14 - 38123 Povo (Trento), Italy}
\email{alessandro.oneto@unitn.it}

\begin{abstract}
A slice decomposition is an expression of a homogeneous polynomial as a sum of forms with a linear factor. A strength decomposition is an expression of a homogeneous polynomial as a sum of reducible forms. The slice rank and strength of a polynomial are the minimal lengths of such decompositions, respectively. The slice rank is an upper bound for the strength and the gap between these two values can be arbitrary large. However, in line with a conjecture by Catalisano et al. on the dimensions of secant varieties of the varieties of reducible forms, we conjecture that equality holds for general forms. By using a weaker version of Fr\"oberg's Conjecture on the Hilbert series of ideals generated by general forms, we show that our conjecture holds up to degree $7$ and in degree $9$. 
\end{abstract}
\maketitle

\section{Introduction and main results}

Additive decompositions of homogeneous polynomials can be a tool to provide useful classifications. We consider the ring $\calS = \Bbbk[x_0,\ldots,x_n]$ of polynomials in $n+1$ variables with coefficients in an algebraically closed field $\Bbbk$, equipped with the standard gradation $\calS = \bigoplus_{d\geq 0}\calS_d$, where $\calS_d$ denotes the $\Bbbk$-vector space of degree-$d$ homogeneous polynomials, or forms. Let $f\in\calS_d$ be a form of degree $d\geq 2$.

First, we consider \textit{(symmetric) slice decompositions} of $f$; that is, expressions of the form
\begin{equation}\label{eq:sliceDecomposition}
f = \ell_1g_1+\ldots+\ell_rg_r, 
\end{equation}
where the $\ell_i$ are linear forms. We call the smallest length of such a decomposition the \textit{(symmetric) slice rank} of $f$. We denote it by $\slrk(f)$.

From a slice decomposition such as \eqref{eq:sliceDecomposition}, it is clear that the hypersurface defined by the vanishing of~$f$ contains a linear space of codimension $r$. A classical example of this type of decomposition and their relation with the geometry of linear spaces on hypersurfaces goes back to the work of Cayley and Salmon: the properties of the celebrated $27$ lines lying on a smooth cubic surface are related to the $120$ ways to write the corresponding quaternary cubic as $f = \ell_1\ell_2\ell_3 + m_1m_2m_3$, where the $\ell_i$'s and $m_i$'s are linear forms. See \cite{HLV19:CayleySalmon} for a recent exposition of these equations and the related literature.

The term \textit{slice decomposition} appeared in \cite{ST16:blog} in the context of ordinary tensors: these decompositions have been used to study subsets of $\bbF^n_q$ with no three-terms arithmetic progressions  \cite{B+17:CapSets,CLP17:Progression,EG17:LargeSubsets}. In both settings, slice decompositions consist of sums of terms with a linear factor. However, even when we view homogeneous polynomials as symmetric tensors, the symmetric slice rank we consider here is different than the slice rank defined in \cite{ST16:blog}; see Remark \ref{rmk:differencesSliceRk}.

The exact value of the slice rank for a general form of degree $d$ in $n+1$ variables is known to be
\begin{equation}\label{eq:slrk_formula}
\slrk_{d,n}^\circ := \min\left\{r \in \bbZ_{\geq(n+1)/2} \,\middle|\, r(n+1-r) \geq \binom{d + n - r}{d}\right\};
\end{equation}
see Corollary \ref{cor:maximalSlice}. It equals the smallest codimension of a linear space contained in the general hypersurface of degree $d$ in $\bbP^n$, i.e., the smallest $r$ such that the \textit{Fano variety} of linear spaces of codimension~$r$ in the general hypersurface of degree $d$ in $n$-dimensional projective space is nonempty. The dimension of such Fano varieties (and therefore their non-emptiness) is well-known (see Section \ref{sec:Fano_slicerank}). We refer for example to \cite[Example~12.5]{Harris:First} or the recent survey \cite[Section 2]{CZ19:LinesConics}. As observed in \cite{CCG08:CIinHypersurfaces}, a more algebraic approach involves the study of the dimension of the $k$-th secant variety of the variety of forms with a linear factor in the projective space of degree-$d$ forms: this dimension can be computed by using a result by Hochster and Laksov \cite[Theorem 1]{HL87:LinearSyzygies} showing that an ideal generated by general forms of the same degree do not have linear syzygies.

Slice decompositions are a special case of \textit{strength decompositions}; that is, expressions of the form
\begin{equation}\label{eq:strengthDecomposition}
	f = g_1h_1 + \ldots + g_rh_r, 
\end{equation}
where $\deg(g_i), \deg(h_i) > 0$. The smallest length of such a decomposition is called the \textit{strength} of $f$. We denote it by $\str(f)$. Note that our notion of strength differs from the original one in \cite{AH:Stillman} by $1$. The advantage of the definition we use is that, like for other notions of rank, a form has strength $\leq r$ if and only if it can be written as a sum of $r$ forms of strength $\leq 1$.

From a decomposition such as \eqref{eq:strengthDecomposition}, it is clear that the variety defined by the vanishing of the $g_i$ is contained in the hypersurface defined by the vanishing of $f$, but there is no reason to expect that the $g_i$ form a complete intersection. However, this can be assumed for the general hypersurface; see~\cite{CCG08:CIinHypersurfaces}.
Strength decompositions were used by Ananyan and Hochster in \cite{AH:Stillman} to prove a famous conjecture by Stillman on the existence of a uniform upper bound, independent on the number of variables, on the projective dimension of a homogeneous ideals in polynomial rings.

Since then, the notion of strength has been prominent in several works: Ananyan and Hochster used it to study explicit Stillman bounds \cite{AH:StillmanBounds}; Erman, Sam and Snowden used it in their works on big polynomial rings, also in connection with Hartshorne's conjecture \cite{ESS:Hartshorne}; Bik, Draisma and Eggermont proved the universality of the notion of strength in \cite{BDE:strength}, generalizing previous results of Kazhdan and Ziegler \cite{KZ:rankspoly} and of Derksen, Eggermont and Snowden \cite{DES17:TopologicalCubic}. 
Moreover, Ballico and Ventura generalized the notion of strength and symmetric slice rank to sections of line bundles over algebraic varieties \cite{BV:lineBundles}.

However, the knowledge on strength of polynomials is still very limited. For example, on the space of homogeneous polynomials of fixed degree $d$ and fixed number of variables $n+1$, neither the general nor the maximal value of the strength is known for arbitrary $n,d$. We have
\begin{center}
	general strength $\leq$ maximal strength $\leq$ maximal slice rank $=$ general slice rank,
\end{center}
where the latter equality follows from the fact that the property of having bounded slice rank is a Zariski-closed condition (see Section \ref{sec:Fano_slicerank}). In this paper, we want to address the following conjecture which states that each of these values are equal. 
\begin{conjecture}\label{conj:genStr=genSl}
Let $f$ be a general form of degree $d\geq2$ in $n+1$ variables. Then $\str(f) = \slrk_{n,d}^\circ$.
\end{conjecture}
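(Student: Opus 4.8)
Since a slice decomposition $f=\ell_1g_1+\dots+\ell_rg_r$ is in particular a strength decomposition (each $\ell_ig_i$ is reducible because $d\ge 2$) and $\slrk(f)=\slrk_{n,d}^\circ$ for general $f$ by Corollary~\ref{cor:maximalSlice}, the inequality $\str(f)\le\slrk_{n,d}^\circ$ holds for general $f$ without further work; the plan is to prove the reverse. Put $r:=\slrk_{n,d}^\circ-1$; I must show that a general form of degree $d$ is not a sum of $r$ reducible forms. For $1\le k\le\lfloor d/2\rfloor$ let $R_k\subseteq\bbP(\calS_d)$ be the image of the multiplication map $\bbP(\calS_k)\times\bbP(\calS_{d-k})\to\bbP(\calS_d)$, and let $R=\bigcup_kR_k$ be the variety of reducible forms; a general $f$ has strength $\le r$ if and only if the $r$-th secant variety $\sigma_r(R)$ is all of $\bbP(\calS_d)$. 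As $\sigma_r(R)=\bigcup_{\underline k}W_{\underline k}$, where $\underline k=(k_1,\dots,k_r)$ runs over $\{1,\dots,\lfloor d/2\rfloor\}^r$ and $W_{\underline k}$ is the closure of $\{g_1h_1+\dots+g_rh_r\mid\deg g_i=k_i,\ \deg h_i=d-k_i\}$, it suffices to show $W_{\underline k}\ne\bbP(\calS_d)$ for every $\underline k$.

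I would next linearise. Differentiating the multiplication map $\prod_{i=1}^r(\calS_{k_i}\times\calS_{d-k_i})\to\calS_d$ at a general point --- equivalently, by Terracini's lemma --- shows that the affine tangent space to $W_{\underline k}$ at a general point $\sum_ig_ih_i$ is $\sum_i(g_i\calS_{d-k_i}+h_i\calS_{k_i})=I_d$, the degree-$d$ piece of the ideal $I=(g_1,h_1,\dots,g_r,h_r)$ generated by $2r$ general forms of degrees $k_1,d-k_1,\dots,k_r,d-k_r$. Since $W_{\underline k}$ is irreducible, $W_{\underline k}\ne\bbP(\calS_d)$ if and only if $(\calS/I)_d\ne 0$. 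Thus Conjecture~\ref{conj:genStr=genSl} is equivalent to the following: for $r=\slrk_{n,d}^\circ-1$ and every tuple $\underline k$, general forms of degrees $k_1,d-k_1,\dots,k_r,d-k_r$ generate an ideal $I$ with $(\calS/I)_d\ne 0$. For $\underline k=(1,\dots,1)$ the $g_i$ are general linear forms, and $(\calS/I)_d\ne 0$ is precisely the inequality $r<\slrk_{n,d}^\circ$ interpreted through the Hochster--Laksov maximal-rank theorem, exactly as in the proof of Corollary~\ref{cor:maximalSlice}; the other tuples carry the genuinely new content.

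To treat a general $\underline k$ I would appeal to Fröberg's conjecture for these $2r$ degrees, which identifies $\dim_\Bbbk(\calS/I)_d$ with the coefficient of $t^d$ in
\[
\Bigl[\,\frac{\prod_{i=1}^r(1-t^{k_i})(1-t^{d-k_i})}{(1-t)^{n+1}}\,\Bigr]_{+},
\]
so that the conjecture reduces to the combinatorial assertion that this coefficient is positive for every $\underline k$ when $r=\slrk_{n,d}^\circ-1$ --- equivalently, that the untruncated series has strictly positive coefficients in all degrees $0,1,\dots,d$. I would prove this by writing the series as $\bigl(\prod_i(1+t+\dots+t^{k_i-1})\bigr)\cdot\bigl(\prod_i(1-t^{d-k_i})/(1-t)^{n+1-r}\bigr)$ --- which makes sense since $r\le n$ --- whose first factor has nonnegative coefficients and constant term $1$, and comparing with the already-settled case $\underline k=(1,\dots,1)$; what remains is, for each fixed $d$, a finite but delicate verification that is uniform over all $n$ and all $\underline k$.

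The step I expect to be the main obstacle is the appeal to Fröberg's conjecture. Only a weak form of it is actually required --- one that predicts, for the specific lists of degrees above and only in the single degree $d$, merely whether $(\calS/I)_d$ vanishes --- and this weak form is known for $d\le 7$ and for $d=9$, which is exactly the source of the degree restriction; for general $d$ it is open, and the elementary bound $\dim(\calS/I)_d\ge\dim\calS_d-\sum_i(\dim\calS_{k_i}+\dim\calS_{d-k_i})$ coming from the absence of low-degree syzygies (which corresponds to the series $1-\sum_i(t^{k_i}+t^{d-k_i})$ instead of the product) is too weak to conclude. A secondary difficulty is that, even granted the needed Fröberg input, the combinatorial positivity has to be organised degree by degree and does not seem to follow from a single monotonicity statement in $\underline k$.
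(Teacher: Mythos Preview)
Your outline is essentially the paper's own strategy, and you have correctly identified both its structure and its limits. A first clarification: the statement is a \emph{conjecture}; the paper does not prove it in general, only for $d\le 7$ and $d=9$ (Corollary~\ref{cor:main}), and your proposal is accordingly not a proof but a plan that reproduces what the paper does in those degrees.

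The match is close. Your Terracini reduction to $(\calS/I)_d\ne 0$ for $I$ generated by $2r$ general forms of degrees $k_1,d-k_1,\dots,k_r,d-k_r$ is exactly equation~\eqref{eq:tangentJoin} and the surrounding discussion. Your ``weak form of Fr\"oberg'' is what the paper calls property~$\mathrm{P}$ (inequality~\eqref{eq:propertyP}); the paper does not cite it as known but \emph{proves} the needed instances in Propositions~\ref{prop:deg4}--\ref{prop:deg9} by an inductive scheme based on Lemma~\ref{lemma:wFC}, peeling off one generator at a time and using the known cases of $\mathrm{sFC}$ (complete intersections, Hochster--Laksov) to control the $\mathrm{Q}$-side. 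This inductive mechanism is the concrete content behind your sentence ``this weak form is known for $d\le 7$ and for $d=9$'', and it is precisely where $d=8,10$ fail (Remarks~\ref{rmk:d=8fail_1} and~\ref{rmk:d=8fail_2}).

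Your combinatorial step --- comparing the Fr\"oberg coefficient for a general $\underline k$ with that for $\underline k=(1,\dots,1)$ --- is Lemma~\ref{lemma:arithmeticLemma}. You are right that it does not follow from a single soft monotonicity argument: the paper proves it (Section~\ref{sec:proofkeylemma}) by an asymptotic analysis in $n$ that reduces to checking positivity of explicit one-variable polynomials for $n$ large, combined with a finite computer verification for the remaining $n$. The factorisation you sketch, pulling out $\prod_i(1+t+\dots+t^{k_i-1})$, is suggestive but does not by itself give the inequality in the right direction, which is why the paper's argument is as laborious as it is.

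In short: correct diagnosis, same route as the paper, and an honest acknowledgement that the general case is blocked by Fr\"oberg. What you would still need to supply to reach the paper's partial result is (i) the proofs of property~$\mathrm{P}$ in the relevant degree ranges and (ii) the proof of the arithmetic comparison lemma.
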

As far as we know, this conjecture has not been explicitly stated before in the literature; however, it was implicitly given within the analysis in \cite{C+19:SecantReducible} on the dimension of secant varieties of the variety of reducible forms. In particular, Conjecture \ref{conj:genStr=genSl} is implied by the following stronger conjecture. 

\begin{conjecture}[{\cite[Remark 7.7]{C+19:SecantReducible}}]\label{conj:dimEquality_intro}
For all integers $d\geq2$ and $n,r\geq1$, the dimension of the $r$-th secant variety of the variety of reducible forms of degree $d$ in $n+1$ variables is equal to the dimension of the $r$-th secant variety of the subvariety of forms with a linear factor. 
\end{conjecture}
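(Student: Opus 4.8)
The plan is to translate the statement into a comparison of Hilbert functions of ideals generated by general forms, and then to invoke a partial case of Fr\"oberg's Conjecture. Since every form with a linear factor is reducible, one containment of the relevant secant varieties---hence the inequality ``$\le$'' between the two dimensions---is automatic, and only the reverse inequality requires work. \textbf{Step 1 (Terracini).} The variety $\mathcal{R}_d$ of reducible forms of degree $d$ in $\calS=\Bbbk[x_0,\dots,x_n]$ is the union, over $a=1,\dots,\lfloor d/2\rfloor$, of the images $X_a$ of the multiplication maps $\calS_a\times\calS_{d-a}\to\calS_d$, and the variety of forms with a linear factor is $X_1$. Each $X_a$ is irreducible, and the affine tangent space to $X_a$ at a general point $gh$ (with $\deg g=a$ and $\deg h=d-a$) is $g\cdot\calS_{d-a}+h\cdot\calS_a=(g,h)_d$. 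Applying Terracini's lemma to each $X_a$ and using that a secant variety of a finite union is obtained by distributing the points among the components, one obtains
\[
\dim\sigma_r(\mathcal{R}_d)+1=\max\Big\{\dim\big(g_1,h_1,\dots,g_r,h_r\big)_d\ :\ \deg g_i+\deg h_i=d,\ g_i,h_i\ \text{general}\Big\},
\]
where I used $\sum_i(g_i,h_i)_d=(g_1,h_1,\dots,g_r,h_r)_d$; the analogous identity for $\sigma_r(X_1)$ is the special case in which all $\deg g_i=1$.

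\textbf{Step 2 (the linear-factor side).} For $X_1$, quotienting out the $r$ general linear forms $\ell_i$ reduces $(\ell_1,h_1,\dots,\ell_r,h_r)_d$ to the degree-$d$ component of an ideal generated by $r$ general forms of degree $d-1$ in $n+1-r$ variables; by Hochster--Laksov's theorem on the absence of linear syzygies among general forms---already used in the computation of $\slrk_{d,n}^\circ$, cf. Corollary~\ref{cor:maximalSlice}, \cite{CCG08:CIinHypersurfaces} and \cite{HL87:LinearSyzygies}---this dimension equals $\min\big(\binom{d+n}{d},\ \binom{d+n}{d}-\binom{d+n-r}{d}+r(n+1-r)\big)$, which is exactly the value predicted by Fr\"oberg's Conjecture for the degree sequence $(1^r,(d-1)^r)$. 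Thus Conjecture~\ref{conj:dimEquality_intro} becomes the assertion that, among all tuples of general forms whose degrees form a disjoint union of $r$ pairs $\{a_i,d-a_i\}$ with $1\le a_i\le d-1$, the tuple with all $a_i=1$ maximizes the dimension of the degree-$d$ component of the generated ideal $I$; equivalently, it \emph{minimizes} the value $\HF_{\calS/I}(d)$ of the Hilbert function of the general quotient.

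\textbf{Step 3 (Fr\"oberg and a combinatorial inequality).} Granting the statement---weaker than Fr\"oberg's full Conjecture, and known in low degrees---that the general quotient satisfies $\HF_{\calS/I}(d)\ge\big[\prod_i(1-t^{d_i})/(1-t)^{n+1}\big]_+\big|_{t^d}$, and combining it with the exact value from Step~2 (where this prediction is attained), the conjecture for a fixed triple $(n,d,r)$ reduces to the purely numerical inequality
\[
\Big[\frac{(1-t)^r(1-t^{d-1})^r}{(1-t)^{n+1}}\Big]_+\Big|_{t^d}\ \le\ \Big[\frac{\prod_{i=1}^r(1-t^{a_i})(1-t^{d-a_i})}{(1-t)^{n+1}}\Big]_+\Big|_{t^d}
\]
for every choice of integers $a_i$ with $1\le a_i\le d-1$. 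I would attack this by convexity: the linear (first-order) term $\binom{d+n}{n}-\sum_i\big(\binom{d-a_i+n}{n}+\binom{a_i+n}{n}\big)$ is minimized at $a_i=1$ because $a\mapsto\binom{a+n}{n}+\binom{d-a+n}{n}$ is convex and symmetric about $a=d/2$, hence maximal at the endpoints $a\in\{1,d-1\}$; one then argues by induction on $r$, adding one pair of forms at a time, that the higher Koszul corrections and the truncation $[\,\cdot\,]_+$ only reinforce the inequality, the base case $r=1$ being the convexity statement itself.

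\textbf{The main obstacle.} The genuinely missing ingredient is Fr\"oberg's Conjecture: unconditionally, Step~3 is available only in those ranges where the required instance of the weak form is a theorem, which is why the final result is restricted to $d\le7$ and $d=9$. A secondary, more technical point is to check that the ideals produced in Step~1 really are generated by \emph{general} forms of the stated degrees---here the tangent-space computation involves $g_i$ and $h_i$ only individually, so semicontinuity in the $2r$-tuple $(g_1,h_1,\dots,g_r,h_r)$ applies directly and no hidden dependency is created by the requirement that each $g_ih_i$ be a single form. I expect the convexity-plus-induction step to demand the most care, since the truncation in Fr\"oberg's formula interacts with the degree distribution in a way that has to be tracked degree by degree.
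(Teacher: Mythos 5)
Your plan follows the paper's high-level strategy quite closely in Steps 1 and 2: the Terracini reduction to Hilbert functions of general ideals (Section 3.2 of the paper, culminating in equation (3.2)) and the Hochster--Laksov computation on the linear-factor side (Remark \ref{rmk:HL}) are exactly what the paper does. However, Step 3 contains a subtle but genuine gap, and your attack on the numerical inequality is not the one the paper uses.

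The issue in Step 3 is that you invoke the weak Fr\"oberg conjecture in its usual truncated form, giving $\HF_{\calS/I}(d)\ge\coeff_d\lceil\prod_i(1-t^{d_i})/(1-t)^{n+1}\rceil$. Pairing this with the exact value on the linear side reduces the conjecture to the purely combinatorial inequality between \emph{truncated} coefficients, which is exactly the inequality you display. The problem is that truncation is a discontinuous operation: if some earlier coefficient $a_j$ of the untruncated series for the ``general'' degree sequence is negative (which is a priori possible once $2r>n+1$, a range one is forced into because $r$ goes up to $\slrk^\circ_{n,d}-1\le n-1$), then the truncated coefficient $b_d^{(\mathrm{gen})}$ drops to $0$, while $b_d^{(1)}=a_d^{(1)}>0$ on the linear side; your displayed inequality would then fail and WFC would give you nothing beyond the trivial $\HF(d)\ge 0$. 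The paper sidesteps this entirely by working with the stronger, bracket-free property $\mathrm{P}_{n,d}(d_1,\dots,d_s)$: $\coeff_d(\HS_{\calS/I})\ge\coeff_d(\prod_i(1-t^{d_i})/(1-t)^{n+1})$ (inequality (5.4)). This is what Lemma~\ref{lemma:wFC}, Lemma~\ref{lemma:P4lows} and Propositions~\ref{prop:deg4}--\ref{prop:deg9} establish via a careful short-exact-sequence induction, and it is precisely what makes the subsequent arithmetic tractable: one then only needs the untruncated inequality $f_{n,d}(m(\ell_1),\ell_2,\dots)\ge f_{n,d}(m(r),0,\dots,0)$ of Lemma~\ref{lemma:arithmeticLemma}. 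Your proposal neither formulates this bracket-free variant nor gives a way to control the truncation, so this step is a genuine hole.

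Your sketch of the purely numerical inequality by ``convexity plus induction'' is also not what works. The convexity of $a\mapsto\binom{a+n}{n}+\binom{d-a+n}{n}$ does handle the first-order Koszul correction, but the higher-order corrections (the full $\beta$-sum in the definition of $f_{n,d}$ in Section~\ref{sec:proofkeylemma}) do not simply ``reinforce the inequality''; the paper's Lemma~\ref{lemma:arithmeticLemma} is proved by a much more delicate asymptotic estimate for $n>N$ (via explicit bounds such as Lemma~\ref{lemma:appendix1} and a polynomial-minorant trick in Lemma~\ref{lemma:appendix2}) together with a finite computer verification for $n\le N$ (Lemma~\ref{lemma:final2}). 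The constants $N$ grow rapidly ($N\approx 7\cdot 10^7$ already for $d=9$), which is further evidence that a clean inductive convexity argument is unlikely. Finally, your framework does not explain why $d=8$ is excluded: for $d=8$ the degree sequence carries $m_4=2\ell_4$, and the hypotheses under which property $\mathrm{P}$ can be proved by this inductive scheme (see Remark~\ref{rmk:d=8fail_2}) fail for the full range $r<\slrk^\circ_{n,d}$, an obstruction your plan does not anticipate.
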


\begin{remark}
In fact, we conjecture that the $r$-th secant variety of the subvariety of forms with a linear factor is the unique component of the $r$-th secant variety of the variety of reducible forms of degree $d$ in $n+1$ variables unless $(n,d,r)=(3,4,2)$. See Remark~\ref{rmk:one-maximal}.
\remarkend
\end{remark}

Conjecture \ref{conj:dimEquality_intro} implies Conjecture \ref{conj:genStr=genSl}. Indeed, a general element of the $r$-th secant variety of the variety of reducible forms (respectively forms with a linear factor) has by definition a length-$r$ strength decomposition (respectively slice decomposition) and the conjecture implies that the general form lies on the $r$-th secant variety of the variety of reducible forms if and only if it lies on the $r$-th secant variety of the variety of forms with a linear factor. Conjecture \ref{conj:genStr=genSl}  is known to be true in some cases:\\
(1) If $d \geq \frac{3}{2}n+\frac{1}{2}$ by \cite[Corollary A]{Sz96:CompleteIntersections} where the author studies complete intersection curves on general hypersurfaces. Indeed, in this range, the general slice rank is at least $n-1$.\\
(2) If $2\cdot\slrk_{n,d}^\circ \leq n+2$ by \cite[Theorem 5.1]{CCG08:CIinHypersurfaces}. Here Conjecture \ref{conj:dimEquality_intro} is deduced from the known cases of Fr\"oberg's conjecture on the Hilbert series of ideals generated by general forms. 

Our main results are the following theorem and its corollary.

\begin{theorem}\label{thm:main}
Conjecture \ref{conj:dimEquality_intro} holds for $d \leq 7$ and $d=9$. 
\end{theorem}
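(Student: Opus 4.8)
The plan is to reduce the statement about secant varieties to a computation about the Hilbert function of ideals generated by general forms, and then to verify that computation in the relevant degrees using the known (or weak) cases of Fröberg's conjecture. Since Conjecture \ref{conj:dimEquality_intro} is an equality between the dimensions of two secant varieties, and the secant variety of forms with a linear factor is always contained in the secant variety of reducible forms, it suffices to show the reverse inequality $\dim \sigma_r(\text{Red}_{d,n}) \leq \dim \sigma_r(\absec_{d,n})$ for all $r$, where $\absec_{d,n}$ denotes the variety of forms with a linear factor. The dimension of the left-hand side is controlled by a Terracini-type argument: the tangent space at a general point $f = g_1h_1 + \cdots + g_rh_r$ is spanned by the spaces $g_i \calS_{e_i} + h_i \calS_{d-e_i}$ where $e_i = \deg g_i$, so $\dim \sigma_r(\text{Red}_{d,n}) + 1$ equals the dimension of $\sum_i (g_i, h_i)_d$, the degree-$d$ part of the ideal generated by the $2r$ general forms $g_1, h_1, \ldots, g_r, h_r$ of the appropriate degrees. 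The analogous computation on the linear-factor side gives the dimension in terms of the ideal generated by general forms $\ell_1, k_1, \ldots, \ell_r, k_r$ with $\deg \ell_i = 1$.

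Next I would set up the combinatorial optimization. For fixed $d$ and $n$, the dimension of $\sigma_r(\text{Red}_{d,n})$ is the maximum over all admissible degree splittings $d = e_i + (d - e_i)$, $1 \leq e_i \leq d-1$, of the dimension of the degree-$d$ piece of the ideal generated by $r$ general pairs of forms of those degrees; one wants to show this maximum is achieved (or at least matched) by the splitting with all $e_i = 1$, i.e. by the linear-factor variety. The engine for computing these dimensions is the statement that an ideal generated by general forms has the Hilbert function predicted by Fröberg, i.e. the coefficients of $\left[\prod_j (1 - t^{d_j}) / (1-t)^{n+1}\right]_+$ (truncation at the first non-positive coefficient); one only needs the weaker statement that the Hilbert function in the single degree $d$ is at least this predicted value, which is what the paper refers to as a weak form of Fröberg's conjecture. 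I would then argue, degree by degree for $d = 2, 3, 4, 5, 6, 7$ and $d = 9$, that this weak Fröberg bound is known (these are precisely the degrees where the relevant cases have been established, by work of Anick in three variables, Stanley, Hochster–Laksov for the first nontrivial degree, and the low-degree results), and that with it the maximum over splittings is attained at the all-linear splitting.

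The main obstacle, and where the real work lies, is the combinatorial step: proving that among all ways to write $d$ as sums $e_i + (d-e_i)$, the choice $e_i = 1$ for all $i$ maximizes the dimension of the degree-$d$ component of the ideal generated by the corresponding general forms. This requires a careful analysis of how the truncated Hilbert series $\left[(1-t)^{-(n+1)} \prod_i (1-t^{e_i})(1-t^{d-e_i})\right]_+$ in degree $d$ depends on the multiset $\{e_i\}$, together with an argument that the contribution of a pair $(e, d-e)$ with $2 \leq e \leq d-2$ never exceeds that of a pair $(1, d-1)$ in the presence of the other generators, which is why the restriction $d \leq 7$ (plus $d = 9$) appears: for these values the number of distinct splitting types is small enough, and the truncation behaves well enough, that the inequality can be checked, whereas for general $d$ the truncation can kick in at different places for different splittings and the comparison breaks down. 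A secondary subtlety is handling the case $(n,d,r) = (3,4,2)$, where the secant variety of reducible forms genuinely has an extra component but still the same dimension, so the dimension equality survives even though the "unique component" statement fails; this case should be dealt with by a direct computation. I expect the proof to proceed by fixing $d \in \{2,3,4,5,6,7,9\}$, enumerating the finitely many splitting patterns, invoking the known weak-Fröberg input for each, and reducing to a finite check that the all-linear pattern dominates.
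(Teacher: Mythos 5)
Your high-level reduction is right: Terracini's Lemma converts the dimension comparison into a statement about the degree-$d$ Hilbert function of ideals generated by pairs of general forms, and the task is to show the all-linear splitting is extremal. But there is a genuine gap in how you propose to run the Fröberg input. You say to use the weak Fröberg conjecture, i.e.\ the bound $\coeff_d(\HS_{\calS/I}) \geq \coeff_d\bigl(\lceil \prod_i(1-t^{d_i})/(1-t)^{n+1}\rceil\bigr)$ with the \emph{truncated} series, and then to compare truncated coefficients across splittings. This does not work as stated: if the truncation kicks in at some degree $<d$ for the ideal attached to a non-linear splitting, the truncated coefficient is $0$ and the bound tells you nothing, while the codimension of $\sigma_r(X_{(1,d-1)})$ you need to beat is strictly positive. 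Worse, comparing truncated coefficients of two different ideals is exactly the problem you flag yourself, and there is no general way around it. What the paper actually proves is the stronger pointwise inequality they call property $\mathrm{P}_{n,d}$, namely $\coeff_d(\HS_{\calS/I}) \geq \coeff_d\bigl(\prod_i(1-t^{d_i})/(1-t)^{n+1}\bigr)$ with \emph{no} truncation. The right-hand side is then an explicit polynomial in $n$ and the $\ell_i$, and the combinatorial comparison with the all-linear splitting becomes the purely arithmetic Lemma~\ref{lemma:arithmeticLemma}, proved separately by asymptotics plus a finite verification. Property $\mathrm{P}$ itself is established inductively (Lemma~\ref{lemma:wFC}) from the reverse inequality $\mathrm{Q}$ in lower degrees, and those $\mathrm{Q}$'s come from Hochster--Laksov and the $s\le n+1$ complete-intersection case.

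A second, related, misconception is your explanation of the restriction to $d\le 7$ and $d=9$. It is not that "the number of splitting types is small enough." The bottleneck is the inductive proof of property $\mathrm{P}$: for $d=8$ the middle factor type $(4,4)$ contributes $m_4 = 2\ell_4$ generators of degree $4$, which can push the generator count in the degrees $2,3,4$ above the budget $n+2-m_1$ needed to invoke the $s\le n+1$ case of Fröberg in the inductive step (see Remark~\ref{rmk:d=8fail_2}). For $d=9$ the middle split $(4,5)$ contributes $m_4=m_5=\ell_4$ and the count stays within budget, which is why $d=9$ is recovered while $d=8$ is not. Your observation about $(n,d,r)=(3,4,2)$ is correct but orthogonal to the theorem: the dimension equality still holds there (both components have codimension $1$); it is only the refined statement about uniqueness of the maximal-dimensional component in Remark~\ref{rmk:one-maximal} that requires special mention.
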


\begin{corollary}\label{cor:main}
Conjecture \ref{conj:genStr=genSl} holds for $d \leq 7$ and $d=9$. 
\end{corollary}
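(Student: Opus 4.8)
Corollary \ref{cor:main} is immediate from Theorem \ref{thm:main} and the implication ``Conjecture \ref{conj:dimEquality_intro} $\Rightarrow$ Conjecture \ref{conj:genStr=genSl}'' already recorded above, so the substance is Theorem \ref{thm:main}, and the plan is as follows. Write $\mathcal{R}\subseteq\bbP(\calS_d)$ for the variety of reducible forms and, for $1\le e\le\lfloor d/2\rfloor$, let $X_e\subseteq\bbP(\calS_d)$ be the closure of the locus of products $gh$ with $g\in\calS_e$ and $h\in\calS_{d-e}$, so that $\mathcal{R}=\bigcup_e X_e$ and $X_1$ is the variety of forms with a linear factor. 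For a finite union, $\dim\sigma_r(\mathcal{R})$ is the maximum, over all $(r_e)_{e=1}^{\lfloor d/2\rfloor}$ with $r_e\ge 0$ and $\sum_e r_e=r$, of the dimension of the join of $\sigma_{r_1}(X_1),\dots,\sigma_{r_{\lfloor d/2\rfloor}}(X_{\lfloor d/2\rfloor})$. The affine tangent space of $X_e$ at a general point $gh$ equals $g\calS_{d-e}+h\calS_e$, which is precisely the degree-$d$ component of the ideal $(g,h)$; hence, by Terracini's lemma, the affine tangent space of that join at a general point is $[I]_d$, where $I$ is the ideal generated by $r_e$ general forms of degree $e$ and $r_e$ general forms of degree $d-e$, for every $e$. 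Taking all points on $X_1$, the affine cone over $\sigma_r(X_1)$ has dimension $\dim[J]_d$, where $J$ is generated by $r$ general linear forms together with $r$ general forms of degree $d-1$. Since $X_1\subseteq\mathcal{R}$ gives $\dim\sigma_r(\mathcal{R})\ge\dim\sigma_r(X_1)$ for free, Conjecture \ref{conj:dimEquality_intro} for the triple $(n,d,r)$ is equivalent to the inequality $\HF(\calS/I,d)\ge\HF(\calS/J,d)$ holding for every admissible $(r_e)$.

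The right-hand side is unconditional: quotienting by the $r$ linear generators of $J$ leaves $r$ general forms of degree $d-1$ in $\max(0,n+1-r)$ variables, so the absence of linear syzygies for equigenerated ideals (Hochster--Laksov, as cited in the introduction) gives $\HF(\calS/J,d)=\max\bigl(0,\binom{d+n-r}{d}-r(n+1-r)\bigr)$ when $r\le n+1$, and $\HF(\calS/J,d)=0$ otherwise --- this being the quantity whose vanishing threshold is exactly \eqref{eq:slrk_formula}. For the left-hand side, when $2r\le n+1$ the ideal $I$ is a complete intersection; writing $(1-t^e)(1-t^{d-e})=(1-t)^2P_e(t)$ with $P_e(t)=(1+t+\cdots+t^{e-1})(1+t+\cdots+t^{d-e-1})$, one finds $\HF(\calS/I,d)=[t^d]\bigl(\prod_{\mathrm{pairs}}P_{e}(t)\cdot(1-t)^{-(n+1-2r)}\bigr)$ and, in the same way, $\HF(\calS/J,d)=[t^d]\bigl(P_1(t)^{r}\cdot(1-t)^{-(n+1-2r)}\bigr)$, with $P_1(t)=1+t+\cdots+t^{d-2}$. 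Since $P_e(t)$ dominates $P_1(t)$ coefficientwise for every $1\le e\le\lfloor d/2\rfloor$ (for $0\le k\le d-2$, pairing $t^{\min(k,e-1)}$ from the first factor of $P_e$ with the appropriate power from the second shows $[t^k]P_e\ge 1$), and multiplication by the nonnegative series $(1-t)^{-(n+1-2r)}$ preserves coefficientwise domination, the required inequality holds in this range with no input from Fröberg.

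For the remaining range $2r>n+1$ the ideal $I$ is no longer a complete intersection, and $\HF(\calS/I,d)$ must be supplied by a weak form of Fröberg's conjecture: that for general forms of degrees $a_1,\dots,a_m$ one has $\HF(\calS/I,d)$ equal to the coefficient of $t^d$ in the truncated series $\bigl[\prod_i(1-t^{a_i})\cdot(1-t)^{-(n+1)}\bigr]_+$. All generator degrees of $I$ lie in the finite set $\{1,\dots,d-1\}$, and the instances of this statement that the argument consumes are available in the literature when $d\le 7$ or $d=9$ but not when $d=8$ or $d\ge 10$; this is the source of the restriction on $d$. Granting the weak Fröberg input, the theorem reduces to the combinatorial assertion that, among all ways of writing the multiset of $2r$ generator degrees as $r$ conjugate pairs $\{e,d-e\}$ with $1\le e\le\lfloor d/2\rfloor$, the all-$\{1,d-1\}$ choice minimises the degree-$d$ coefficient of the corresponding truncated Fröberg series --- which one verifies by a short case analysis on $d\in\{4,5,6,7,9\}$, the cases $d\le 3$ being trivial because then $\mathcal{R}=X_1$.

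The main obstacle is precisely this last step. In the complete-intersection range the comparison was a one-line coefficientwise domination, but once truncation enters the picture the operator $[\,\cdot\,]_+$ is no longer monotone: before truncation the naive count for a mixed shape can overtake the one for $X_1$ in high degrees, and whether this survives at $t^d$ depends delicately on the degree in which the Fröberg series of $I$ first becomes non-positive. One has to control this uniformly in $n$ and in the distribution $(r_e)$, rather than by a finite verification, and one cannot bypass it by a crude dimension count: indeed $\sigma_r(X_1)$ is itself defective for $r\ge 2$, so both sides of the asserted equality lie strictly below their a priori expected values. I expect the cases $d=6,7,9$, where three or four shapes $e$ coexist and the Fröberg instances needed are the least classical, to demand the most care.
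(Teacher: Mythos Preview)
Your reduction of the corollary to Theorem~\ref{thm:main} is correct, and your structural outline of that theorem (Terracini, Hochster--Laksov for $\sigma_r(X_{(1,d-1)})$, complete-intersection range via coefficientwise domination) matches the paper. There are, however, two genuine gaps in the plan for the range $2r>n+1$.

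First, the needed Fr\"oberg-type instances are \emph{not} available in the literature. The paper must establish them itself: this is the content of Propositions~\ref{prop:deg4}--\ref{prop:deg9}, proved by an induction (Lemma~\ref{lemma:wFC}) that adds one generator at a time and consumes, at each step, a property~$\mathrm{Q}$ in a strictly lower degree where sFC \emph{is} known (Lemmas~\ref{lemma:sFC2Q2}, \ref{lemma:sFC2Q3}, \ref{lemma:P4lows}). The restriction to $d\le 7$ and $d=9$ arises precisely because this induction needs $m_2+\ldots+m_{\lfloor d/2\rfloor}\le n+1$ (or mild variants), and for even $d$ the middle component contributes $2\ell_{d/2}$ generators of the same degree; at $d=8$ the hypothesis $\ell_1+\ldots+\ell_4\le n$ no longer forces $m_2+m_3+m_4\le n+1-\ell_1$ (Remark~\ref{rmk:d=8fail_2}).

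Second, and more importantly, the paper does \emph{not} work with the truncated series $\lceil\cdot\rceil$ at all. It proves the property~$\mathrm{P}$ of~\eqref{eq:propertyP}, namely
\[
\HF_{\calS/I}(d)\ \ge\ \coeff_d\left(\frac{\prod_i(1-t^{d_i})}{(1-t)^{n+1}}\right)
\]
with the \emph{un}-truncated right-hand side. This is the device that dissolves exactly the obstacle you flag in your last paragraph: once truncation is removed, the comparison of the mixed join against $\sigma_r(X_{(1,d-1)})$ becomes the purely polynomial inequality $f_{n,d}(m(\ell_1),\ell_2,\ldots)\ge f_{n,d}(m(r),0,\ldots,0)$ of Lemma~\ref{lemma:arithmeticLemma}, with no positivity cut-off to track. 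That lemma is still far from ``a short case analysis'': its proof occupies all of Section~\ref{sec:proofkeylemma}, combining asymptotic bounds on $n-\slrk^\circ_{n,d}$ (Lemma~\ref{lemma:appendix1}), a reduction to single-variable polynomial positivity via a crude monomial replacement (Lemma~\ref{lemma:appendix2}), and a finite computer check for $n$ below an explicit threshold (up to $N\approx 4.4\times 10^9$ for $d=10$).
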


As observed in \cite{CCG08:CIinHypersurfaces}, the latter result can be rephrased as follows.

\begin{corollary}
Let $d\in\{2,3,4,5,6,7,9\}$ and $n\geq 1$ be any integers. Then the general hypersurface in~$\bbP^n_\Bbbk$ of degree $d$ does not contain any complete intersection of codimension $r < \slrk_{n,d}^\circ$.
\end{corollary}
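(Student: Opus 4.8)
The plan is to reduce the statement about complete intersections on general hypersurfaces to the equality of secant variety dimensions, i.e.\ to Corollary~\ref{cor:main} (equivalently Conjecture~\ref{conj:genStr=genSl}), which is established for $d\in\{2,3,4,5,6,7,9\}$. The translation goes as follows. Suppose the general hypersurface $\{f=0\}$ in $\bbP^n_\Bbbk$ of degree $d$ contains a complete intersection $Z$ of codimension $r$, cut out by forms $g_1,\dots,g_r$ of degrees $e_1,\dots,e_r$ (each $e_i\ge 1$, and we may assume $e_i<d$, else that generator is irrelevant). Then $f\in(g_1,\dots,g_r)_d$, so $f=g_1h_1+\dots+g_rh_r$ for suitable forms $h_i$, which is exactly a strength decomposition of length $\le r$; hence $\str(f)\le r$. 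By Corollary~\ref{cor:main}, $\str(f)=\slrk_{n,d}^\circ$ for general $f$, so $r\ge \slrk_{n,d}^\circ$. This contradicts $r<\slrk_{n,d}^\circ$, proving the corollary.

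First I would make precise the phrase ``the general hypersurface contains a complete intersection of codimension $r$'': this should mean that there is a family of complete intersections of codimension $r$ whose members sweep out (dominate, under the incidence correspondence) the space $\bbP(\calS_d)$ of all degree-$d$ hypersurfaces, so that a Zariski-general $f$ lies on one of them. One then picks such a general $f$ — general enough that simultaneously $\str(f)=\slrk_{n,d}^\circ$ holds, which is possible since that too is the behaviour on a dense open set by the discussion preceding Conjecture~\ref{conj:genStr=genSl} (general strength equals general slice rank). The containment $Z\subseteq\{f=0\}$ with $Z=V(g_1,\dots,g_r)$ forces $f$ to vanish on $Z$, and since $(g_1,\dots,g_r)$ is a saturated ideal (being a complete intersection, it is unmixed and saturated), $f$ lies in the ideal in the relevant degree, giving the strength decomposition.

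The only subtlety is bookkeeping about which $e_i$ to keep: if some $e_i\ge d$ then $g_i$ contributes nothing to $(g_1,\dots,g_r)_d$ and can be dropped, which only makes the codimension of the complete intersection cut out by the remaining forms smaller — but a genuine codimension-$r$ complete intersection needs all $r$ forms to be essential, so in the relevant range $1\le e_i\le d-1$ for all $i$ and the argument goes through cleanly. I do not expect a real obstacle here; this corollary is a direct dictionary translation of Corollary~\ref{cor:main} and was already observed in \cite{CCG08:CIinHypersurfaces} in the analogous context, so the proof is essentially the remark ``$f\in(g_1,\dots,g_r)$ yields a strength decomposition of the same length, hence $\str(f)\le r$'', combined with the main corollary.
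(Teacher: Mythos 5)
Your core argument---that a complete intersection $Z = V(g_1,\ldots,g_r)\subseteq X_f$ gives $f\in I(Z)_d=(g_1,\ldots,g_r)_d$, hence a strength decomposition of length at most $r$, which together with Corollary~\ref{cor:main} forces $r\ge\slrk_{n,d}^\circ$ for general $f$---is exactly the translation the paper intends; the paper gives no written proof and only cites \cite{CCG08:CIinHypersurfaces} for the rephrasing, so you are reconstructing the same dictionary.

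The one place where your reasoning is off is the treatment of generators of degree exactly $d$. You assert that if $e_i\ge d$ then $g_i$ ``contributes nothing to $(g_1,\dots,g_r)_d$''; that is true for $e_i>d$, but for $e_i=d$ the generator contributes the line $\Bbbk g_i$, and such a generator is not automatically inessential for the complete intersection. In fact, without a hypothesis on the degrees the corollary is simply false: take $f$ general, $g_1$ a general linear form, $h_1$ a general form of degree $d-1$, and set $g_2:=f-g_1h_1$. Then $g_1,g_2$ is a regular sequence (since $g_2\equiv f\not\equiv 0\pmod{g_1}$ for general $f$), so $V(g_1,g_2)$ is a genuine codimension-$2$ complete intersection contained in $X_f$, while $\slrk_{n,d}^\circ\ge\lceil(n+1)/2\rceil>2$ for $n\ge 4$. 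The resolution is not that degree-$d$ generators can be discarded, but that the corollary (following the convention of \cite{CCG08:CIinHypersurfaces}, and as needed to make sense of the sentence ``the general strength measures the smallest codimension of a complete intersection contained in a general hypersurface'' in Section~\ref{sec:geom_strength}) concerns complete intersections cut out by forms of degrees strictly less than $d$. Once you take $e_i\le d-1$ as part of the hypothesis rather than something to derive, each $h_i$ in $f=\sum g_ih_i$ has $\deg h_i = d-e_i\ge 1$, the expression is a legitimate strength decomposition, and your first paragraph is a complete and correct proof.
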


See Table~\ref{table:stateoftheart} for an overview of the currently known cases.

Our approach to computing dimensions of the different components of secant varieties of the varieties of reducible forms follows the one of \cite{CCG08:CIinHypersurfaces} and \cite{C+19:SecantReducible}: the tangent spaces at general points are defined by homogeneous parts of ideals generated by general forms and, therefore, we study Hilbert functions of general ideals. However, instead of Fr\"oberg's conjecture (Conjecture \ref{conj:Froberg}) we focus on the inequality (\ref{eq:propertyP}) which implies a weaker version (Conjecture \ref{conj:wFroberg}) where the equality between the Hilbert series of an ideal generated by general forms and the prescribed power series is replaced by a coefficient-wise inequality. We show that this inequality holds in low degrees ($d \leq 7$ and $d = 9$) for the ideals of our interest; see Section \ref{sec:Froberg}. Then, after a careful and technical study of the asymptotic behaviour of the prescribed power series, we deduce that in this numerical range Conjecture \ref{conj:dimEquality_intro} and, consequently, Conjecture \ref{conj:genStr=genSl} hold. Moreover, our analysis allows us to deduce that, for $d \leq 7$ and $d = 9$, the $r$-th secant variety of the variety of forms with a linear factor is the unique maximal-dimension component of the $r$-th secant variety of the variety of reducible forms, except in the case $(n,d,r) = (3,4,2)$ where the components are all of codimension one; see Remark \ref{rmk:one-maximal}. 

\begin{center}
\begin{table}[h]\label{table:stateoftheart}
\begin{tabular}{c | c c c c c c c c c c c c}
	\diaghead(-2,1){aaaaa}%
{$n$}{$d$} & 2 & 3 & 4 & 5 & 6 & 7 & 8 & 9 & 10 & 11 & 12 & $\cdots$ \\
\hline
 2 & \cellcolor{red!50!white} &  \cellcolor{red!50!white} &  \cellcolor{red!50!white} &  \cellcolor{red!50!white} &  \cellcolor{red!50!white} &  \cellcolor{red!50!white} &  \cellcolor{red!50!white} &  \cellcolor{red!50!white} &  \cellcolor{red!50!white} &  \cellcolor{red!50!white} &  \cellcolor{red!50!white} &  \cellcolor{red!50!white} \\
 3 & \cellcolor{green!50!white} &  \cellcolor{green!50!white} &  \cellcolor{red!50!white} &  \cellcolor{red!50!white} &  \cellcolor{red!50!white} &  \cellcolor{red!50!white} &  \cellcolor{red!50!white} &  \cellcolor{red!50!white} &  \cellcolor{red!50!white} &  \cellcolor{red!50!white} &  \cellcolor{red!50!white} &  \cellcolor{red!50!white} \\
 4 & \cellcolor{green!50!white} &  \cellcolor{green!50!white} &  \cellcolor{green!50!white} &  \cellcolor{green!50!white} &  \cellcolor{red!50!white} &  \cellcolor{red!50!white} &  \cellcolor{red!50!white} &  \cellcolor{red!50!white} &  \cellcolor{red!50!white} &  \cellcolor{red!50!white} &  \cellcolor{red!50!white} &  \cellcolor{red!50!white} \\
 5 & \cellcolor{green!50!white} &  \cellcolor{blue!50!white} &  \cellcolor{blue!50!white} &  \cellcolor{blue!50!white} &  \cellcolor{blue!50!white} &  \cellcolor{red!50!white} &  \cellcolor{red!50!white} &  \cellcolor{red!50!white} &  \cellcolor{red!50!white} &  \cellcolor{red!50!white} &  \cellcolor{red!50!white} &  \cellcolor{red!50!white} \\
 6 & \cellcolor{green!50!white} &  \cellcolor{green!50!white} &  \cellcolor{blue!50!white} &  \cellcolor{blue!50!white} &  \cellcolor{blue!50!white} &  \cellcolor{blue!50!white} &  &  \cellcolor{red!50!white} &  \cellcolor{red!50!white} &  \cellcolor{red!50!white} &  \cellcolor{red!50!white} &  \cellcolor{red!50!white} \\
 7 & \cellcolor{green!50!white} &  \cellcolor{blue!50!white} &  \cellcolor{blue!50!white} &  \cellcolor{blue!50!white} &  \cellcolor{blue!50!white} &  \cellcolor{blue!50!white} &  &  \cellcolor{blue!50!white} &  \cellcolor{red!50!white} &  \cellcolor{red!50!white} &  \cellcolor{red!50!white} &  \cellcolor{red!50!white} \\
 8 & \cellcolor{green!50!white} &  \cellcolor{green!50!white} &  \cellcolor{blue!50!white} &  \cellcolor{blue!50!white} &  \cellcolor{blue!50!white} &  \cellcolor{blue!50!white} &  &  \cellcolor{blue!50!white} & & &  \cellcolor{red!50!white} &  \cellcolor{red!50!white} \\
 \vdots & \cellcolor{green!50!white} &  \cellcolor{blue!50!white} &  \cellcolor{blue!50!white} &  \cellcolor{blue!50!white} &  \cellcolor{blue!50!white} &  \cellcolor{blue!50!white} &  &  \cellcolor{blue!50!white} & & & &  \cellcolor{red!50!white} $d \geq \frac{3}{2}n + \frac{1}{2}$ 
\end{tabular}
\begin{tabular}{r l}
	${\color{red!50!white}\blacksquare}$ & : \cite[Corollary A]{Sz96:CompleteIntersections} \\
${\color{green!50!white}\blacksquare}$ & : \cite[Theorem 5.1]{CCG08:CIinHypersurfaces} \\
${\color{blue!50!white}\blacksquare}$ & : Corollary \ref{cor:main}
\end{tabular}

\medskip
\caption{State-of-the-art of Conjecture \ref{conj:genStr=genSl}.} 
\end{table}
\end{center}

\subsection*{Structure of the paper}
Sections~\ref{sec:sliceRanks} and~\ref{sec:strength_geometry}, go over the basic properties of the slice rank and strength of forms, respectively. In Sections~\ref{sec:Froberg}, we discuss Fr\"oberg's conjecture. In Section~\ref{sec:proofMain}, we prove Theorem~\ref{thm:main} assuming Lemma~\ref{lemma:arithmeticLemma}. And finally, in Section~\ref{sec:proofkeylemma}, we prove Lemma~\ref{lemma:arithmeticLemma}.

\subsection*{Acknowledgements}
We thank Juliette Bruce for the proof of Proposition \ref{prop:unboundedslicerank} which simplifies the proof we had in a previous version of this paper. We thank an anonymous referee for useful comments which helped us to improve the first version of our paper. A.O. thanks the University of Bern (Switzerland) for its hospitality during a visit where the project of the paper was discussed. During this project, A.B. was supported by the NWO Vici grant entitled \textit{Stabilisation in Algebra and Geometry} and A.O. acknowledges financial support from the Spanish Ministry of
Economy and Competitiveness, through the Mar\'ia de Maeztu Programme for Units of Excellence in R\&D (MDM-2014-0445) and from the Alexander von Humboldt-Stiftung (Germany) via a Humboldt Research Fellowship for Postdoctoral Researchers
(April~2019--March~2020).

\section{The slice rank of a form}\label{sec:sliceRanks}

As far as we know, the term \textit{slice rank} for tensors was explicitly introduced by Sawin and Tao in \cite{ST16:blog}. Here, we consider a symmetric version of this notion.

\begin{definition}\label{def:sliceRank}
Let $f \in \calS_d$. A \textbf{(symmetric) slice decomposition} of $f$ is an expression 
\[
f = \ell_1g_1 + \ldots + \ell_rg_r,
\]
where $\ell_i \in \calS_1, g_i \in \calS_{d-1}$. The minimal length $r$ of a slice decomposition of $f$ is the \textbf{(symmetric) slice rank} of $f$. We denote it by $\slrk(f)$.
\end{definition}

\begin{remark}\label{rmk:differencesSliceRk}
If $V_1,\ldots,V_d$ are $\Bbbk$-vector spaces, the slice rank of a tensor $t \in \bigotimes_j V_j$ is defined as follows. For each $i \in \{1,\ldots,d\}$, let $\otimes_i\colon V_i \times \bigotimes_{j \neq i} V_j \rightarrow \bigotimes_j V_j$ be the bilinear map sending $(v_i,v_1\otimes\cdots\otimes \hat{v}_i\otimes\cdots\otimes v_d)$ to $v_1\otimes\cdots\otimes v_d$. Here $\hat{v}_i$ denotes that $v_i$ is missing from the expression. Then, the slice rank of $t \in \bigotimes_j V_j$ is the smallest length $r$ of an expression 
\[
t = \sum_{k=1}^r v_k \otimes_{i_k} t_k,
\]
where $v_k \in V_{i_k}$ and $t_k\in \bigotimes_{j \neq i_k} V_j$ for all $k$. Since symmetric tensors are naturally identified with homogeneous polynomials, we have two notions of slice rank in this case, which are very different. Consider for example the symmetric tensor 
\[
t:=\frac{1}{d}(x_0\otimes x_1 \otimes \cdots \otimes x_1 + x_1 \otimes x_0 \otimes x_1 \otimes \cdots \otimes x_1 + \ldots + x_1 \otimes \cdots \otimes x_1 \otimes x_0)
\]
corresponding to the monomial $x_0x_1^{d-1}\in\calS_d$. The slice rank from Definition \ref{def:sliceRank} of the polynomial $x_0x_1^{d-1}$ is $1$, while the slice rank defined in \cite{ST16:blog} of the above tensor is equal to $2$. It is for this reason that we call the slice rank from Definition \ref{def:sliceRank} the \textit{symmetric} slice rank. However, in this paper we only consider homogeneous polynomials and therefore we can call it simply the slice rank since no ambiguity occurs.

We see that the tensor $t$ has slice rank $\leq2$ using the following decomposition
\[
	t = x_0 \otimes_1(x_1 \otimes \cdots \otimes x_1) + x_1 \otimes_1 \big( x_0 \otimes x_1 \otimes \cdots \otimes x_1 + x_1 \otimes x_0 \otimes \cdots \otimes x_1 + \ldots + x_1 \otimes \cdots \otimes x_1 \otimes x_0\big).
\]
To see that the tensor has slice rank $>1$, we note that any tensor $t \in \bigotimes_j V_j$ induces natural maps 
\[
\langle -,t\rangle_i\colon{\rm Hom}\left({\textstyle \bigotimes_{j \neq i}} V_j,\Bbbk\right)\to V_i,\quad i=1,\ldots,d
\]
with the property that the image of $\langle -,v \otimes_{i} t\rangle_i$ is spanned by $v$. For our tensor $t$, the images these maps are all spanned by $x_0$ and $x_1$. So the tensor must have slice rank $>1$.
\remarkend
\end{remark}

\begin{example}
By the \textit{Fundamental Theorem of Algebra}, any nonzero binary form ($n=1$) has slice rank equal to $1$. The slice rank is subadditive, i.e., $\slrk(\sum_i f_i) \leq \sum_i\slrk(f_i)$. It follows that the slice rank of a homogeneous polynomial in $n+1$ variables has slice rank $\leq n$. Indeed, any polynomial $f \in \calS_d$ can be written as 
\[
f = f(x_0,x_1,0,\ldots,0)+ x_2\cdot g_2 + \ldots + x_n\cdot g_n,\quad g_2,\ldots,g_n\in\calS_{d-1}.
\]
The same bound can be explained more geometrically: any point $P = \{\ell_1 = \ldots = \ell_n = 0\}$ on the hypersurface $\{f = 0\} \subseteq \bbP^n_\Bbbk$ provides a slice decomposition of $f$ with $n$ summands. This geometric interpretation of slice decompositions in terms of linear spaces contained in hypersurfaces is explained in Section \ref{sec:Fano_slicerank}.
\exampleend
\end{example}

\begin{example}
In the case of quadrics ($d=2$), if $\Bbbk$ is a field of characteristic different than $2$ and $i \in \Bbbk$ is an element such that $i^2 = -1$, then we have
\[
\ell_1\ell_2 = \left(\frac{1}{2} (\ell_1+\ell_2)\right)^2 + \left(\frac{i}{2} (\ell_1-\ell_2)\right)^2 \qquad \text{ and } \qquad \ell_1^2 + \ell_2^2 = (\ell_1 + i\ell_2)(\ell_1 - i\ell_2)
\]
for all linear forms $\ell_1,\ell_2$. Identifying quadrics with symmetric matrices of size $(n+1)\times(n+1)$, it follows that the space of polynomials of slice rank $\leq r$ coincides with the variety of symmetric matrices of rank~$\leq 2r$. Hence, the general slice rank in $\calS_2$ is $\left\lceil \frac{n+1}{2} \right\rceil$.\exampleend
\end{example}

\subsection{Fano varieties of linear spaces and the general slice rank}\label{sec:Fano_slicerank}

Given $f \in \calS_d$, let $X_f$ be the hypersurface $\{f = 0\}$ in the $n$-dimensional projective space $\bbP^n_\Bbbk$. From a slice decomposition of $f$ of length~$r$, it is immediate that $X_f$ contains a linear space of codimension $r$. Conversely, if the hypersurface~$X_f$ contains the linear space $\{\ell_1=\ldots=\ell_r=0\}$, then $f$ belongs to the ideal $(\ell_1,\ldots,\ell_r)$, i.e., it gives rise to a slice decomposition of $f$ of length $r$. Therefore we have 
\begin{equation}\label{eq:sliceRank=codimension}
	\slrk(f) = \min\left\{\mathrm{codim}(H) \,\middle|\, H \text{ linear space}, H \subseteq X_f\right\}.
\end{equation}
As a direct consequence of this interpretation of the slice rank, it is easy to prove that the set of homogeneous polynomials of bounded slice rank is an algebraic variety. Indeed, it is enough to consider the incidence variety $\Xi = \left\{(H,[f]) ~|~ f|_H = 0\right\} \subseteq \bbG(n-r,n) \times \bbP(\calS_d)$. Since the Grassmannian is a complete variety, the projection of $\Xi$ to the second factor is a closed map. For the same proof see for example \cite[Corollary 2]{ST16:blog} in the tensorial case or \cite[Proposition 2.2]{DES17:TopologicalCubic} in the case of cubic polynomials. 

It follows that:
\begin{enumerate}
\item The set of forms with slice rank $\leq r$ is Zariski-closed.
\item The slice rank of the general form in $\calS_d$ coincides with the maximal slice rank. We denote it by $\slrk_{n,d}^\circ$ and simply call it the \textit{general slice rank} in $\calS_d$.
\end{enumerate}
We point out these two facts because they also hold for the usual rank of matrices, but fail for its higher-order generalizations of tensor rank and symmetric rank. For this reason, there is no need to define the \textit{border slice rank} as in the case of other ranks. 

By \eqref{eq:sliceRank=codimension}, the notion of slice rank is related to the study of the \textit{Fano varieties} $F_k(X)$ parametrizing $k$-dimensional linear spaces contained in a hypersurface $X$. See \cite[Example 12.5]{Harris:First} or the recent survey \cite{CZ19:LinesConics}. Concretely, given $f \in \calS_d$, we have
\begin{equation}\label{eq:slicerank_Fano}
\slrk(f) = \min\left\{r \in \bbZ_{\geq0} \,\middle|\, F_{n-r}(X_f) \neq \emptyset \right\}.
\end{equation}

\begin{example}\label{example:Fermat}
Let $f = x_0^d + \ldots + x_n^d$ be the degree-$d$ Fermat polynomial. Then 
\[
\slrk(f)\leq \left\lceil \frac{n+1}{2} \right\rceil
\]
since $f$ can be written as the sum of the binary forms $x_{2i}^d+x_{2i+1}^d$ for $i = 0,\dots,\lfloor (n+1)/2\rfloor$ together with the binary form $x_n^d$ if $n$ is even. Conversely, since $X_f$ is smooth, the slice rank of $f$ is at least~$(n+1)/2$. Indeed, any smooth nondegenerate hypersurface in $\bbP^n_\Bbbk$ cannot contain a linear space of dimension $m$ such that $2m\geq n$; see \cite[Proposition 1]{Starr}. Therefore, $\slrk(f) = \left\lceil \frac{n+1}{2} \right\rceil$.\exampleend
\end{example}

\begin{remark}\label{re:differenceRCslicerank}
Note that, at least when also considering fields that are not algebraically closed, the slice rank of a form can go down when we extend the ground field $\Bbbk$. For an example of this, again consider a Fermat polynomial $f = x_0^d + \ldots + x_n^d$ of even degree $d\geq 2$. The only real point on $X_f$ is $0$, and so the slice rank of $f$ equals $n+1$ over $\bbR$, while the slice rank of $f$ over $\bbC$ equals $\left\lceil \frac{n+1}{2} \right\rceil$.\remarkend
\end{remark}

\begin{proposition}\label{prop:slicerankreducible}
Let $g,h$ be forms of degree $\geq 2$ and take $f=g\cdot h$. Then
\[
\slrk(f)=\min(\slrk(g),\slrk(h)).
\]
\end{proposition}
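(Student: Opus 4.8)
The plan is to prove the two inequalities $\slrk(f) \le \min(\slrk(g),\slrk(h))$ and $\slrk(f) \ge \min(\slrk(g),\slrk(h))$ separately, using the geometric characterization \eqref{eq:sliceRank=codimension} throughout.

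For the upper bound, suppose without loss of generality that $\slrk(g) \le \slrk(h)$ and set $r = \slrk(g)$. By \eqref{eq:sliceRank=codimension} there is a linear space $H = \{\ell_1 = \cdots = \ell_r = 0\}$ of codimension $r$ contained in $X_g$. Since $X_g \subseteq X_f = X_g \cup X_h$, we get $H \subseteq X_f$, so $f \in (\ell_1,\ldots,\ell_r)$, giving a slice decomposition of $f$ of length $r$. Hence $\slrk(f) \le r = \min(\slrk(g),\slrk(h))$.

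For the lower bound, let $r = \slrk(f)$ and pick a linear space $H$ of codimension $r$ with $H \subseteq X_f = X_g \cup X_h$. The point is that a linear space, being irreducible, cannot be covered by two proper closed subsets: $H = (H \cap X_g) \cup (H \cap X_h)$ forces $H \subseteq X_g$ or $H \subseteq X_h$. If $H \subseteq X_g$ then $\slrk(g) \le \codim(H) = r$; similarly in the other case $\slrk(h) \le r$. Either way $\min(\slrk(g),\slrk(h)) \le r = \slrk(f)$. Combining the two bounds gives the claimed equality.

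The only subtle point is the irreducibility argument in the lower bound, and here the hypothesis $\deg(g),\deg(h) \ge 2$ is not actually needed for this step — it is presumably there to guarantee $f$ is genuinely reducible (and to match the setting of strength decompositions), but the proof goes through verbatim. One should just double-check the degenerate possibility that $g$ or $h$ is a nonzero constant or that $X_g = \emptyset$: since $\deg(g),\deg(h)\ge 2 \ge 1$ both hypersurfaces are nonempty in $\bbP^n_\Bbbk$ over the algebraically closed field $\Bbbk$, and $X_f$ is their set-theoretic union, so no edge cases arise. Thus I expect the main (minor) obstacle to be merely phrasing the "irreducible space not contained in either piece" step cleanly; the rest is immediate from \eqref{eq:sliceRank=codimension}.
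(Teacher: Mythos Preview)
Your proof is correct and follows essentially the same approach as the paper: both use the decomposition $X_f = X_g \cup X_h$, the irreducibility of linear spaces, and the geometric characterization \eqref{eq:sliceRank=codimension}. The paper's version is simply a more compressed one-sentence statement of the same argument.
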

\begin{proof}
Note that $X_f=X_g\cup X_h$ and so a linear subspace, which is always irreducible, is contained in~$X_f$ if and only if it is contained in $X_g$ or $X_h$. The proposition now follows from (\ref{eq:sliceRank=codimension}).
\end{proof}

A numerical condition to guarantee the nonemptiness of the Fano scheme is well-known.

\begin{theorem}[{\cite[Theorem 12.8]{Harris:First}}]\label{thm:nonemptinessFano}
Let $n\geq1$, $d\geq 3$ and $r\geq0$ be positive integers and take
\[
\delta(n,d,r) := (r+1)(n-r) - \binom{d+r}{d}.
\]
Let $f \in \calS_d$ be a general form. 
\begin{enumerate}
\item If $\delta(n,d,r) \geq 0$, then $F_r(X_f)$ is nonempty, smooth and of dimension $\delta(n,d,r)$.
\item If $\delta(n,d,r) < 0$, then $F_r(X_f)$ is empty.
\end{enumerate}
\end{theorem}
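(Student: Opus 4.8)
The plan is to realize $F_r(X_f)$ as a fiber of a projection from an incidence variety and then count dimensions, using a genericity (Bertini-type) argument to get smoothness and the exact dimension in the nonempty case. Concretely, I would work over the Grassmannian $G := \mathbb{G}(r,n)$ of $r$-planes in $\mathbb{P}^n$, which has dimension $(r+1)(n-r)$, and form the incidence variety
\[
\Xi = \{(H,[f]) \in G \times \mathbb{P}(\mathcal{S}_d) \mid f|_H = 0\}.
\]
The first step is to show $\Xi$ is irreducible of dimension $(r+1)(n-r) + \dim\mathbb{P}(\mathcal{S}_d) - \binom{d+r}{d}$: the projection $\pi_1 \colon \Xi \to G$ is surjective, and each fiber $\pi_1^{-1}(H)$ is the projectivization of the space of forms vanishing on $H$, hence a linear subspace of $\mathbb{P}(\mathcal{S}_d)$ of codimension exactly $\dim \mathcal{S}_d(H) = \binom{d+r}{d}$ (the dimension of degree-$d$ forms on $H \cong \mathbb{P}^r$). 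Since $\mathbb{P}(\mathcal{S}_d)$ is nonempty as soon as $\binom{d+r}{d}$ does not exceed $\dim\mathcal{S}_d + 1$ — which always holds — and since all fibers of $\pi_1$ have the same dimension over the irreducible base $G$, we conclude $\Xi$ is irreducible of the asserted dimension, and $F_r(X_f) = \pi_2^{-1}([f])$ where $\pi_2 \colon \Xi \to \mathbb{P}(\mathcal{S}_d)$ is the second projection.

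The second step is the dimension count for $\pi_2$. We have $\dim\Xi - \dim\mathbb{P}(\mathcal{S}_d) = (r+1)(n-r) - \binom{d+r}{d} = \delta(n,d,r)$. If $\delta(n,d,r) < 0$, then $\dim\Xi < \dim\mathbb{P}(\mathcal{S}_d)$, so $\pi_2$ cannot be dominant and a general $[f]$ is not in the image; hence $F_r(X_f) = \emptyset$, giving (2). If $\delta(n,d,r) \geq 0$, I would show $\pi_2$ is dominant by exhibiting a single $[f_0]$ for which $F_r(X_{f_0})$ has the expected dimension $\delta(n,d,r)$ and is generically smooth — for instance taking $f_0$ to be a form containing a fixed $r$-plane $H_0$ and checking, via the description of the tangent space to $F_r$ at $[H_0]$ as $\mathrm{Hom}(N, \mathcal{S}_{d-1}(\text{normal directions}))$-type data, i.e. as the kernel of the restriction map $\mathcal{S}_{d-1}|_{H_0} \otimes (\text{normal bundle data}) \to \cdots$, that the relevant linear map has the expected rank for a general choice. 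Generic smoothness then gives that over a dense open $U \subseteq \mathbb{P}(\mathcal{S}_d)$ the fiber $F_r(X_f)$ is smooth of pure dimension $\delta(n,d,r)$, which is (1).

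The main obstacle is the smoothness-and-exact-dimension claim in the nonempty case: establishing that the general fiber of $\pi_2$ has dimension exactly $\delta(n,d,r)$ — not merely at least that — requires showing $\pi_2$ is dominant, which amounts to producing one hypersurface whose Fano scheme is nonempty and behaves as expected. The cleanest route is the infinitesimal computation of $T_{[H]}F_r(X_f)$: identifying it with the kernel of the map $H^0(H, N_{H/\mathbb{P}^n}) \to H^0(H, \mathcal{O}_H(d))$ induced by $f$ (here $N_{H/\mathbb{P}^n} \cong \mathcal{O}_H(1)^{\oplus(n-r)}$, so $H^0 = \mathcal{S}_1^{\oplus(n-r)}/(\text{terms supported off } H)$ of dimension $(r+1)(n-r)$) and $H^0(H,\mathcal{O}_H(d))$ of dimension $\binom{d+r}{d}$. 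One must check this map is surjective for some $(H,f)$ with $f|_H = 0$; surjectivity is a straightforward linear-algebra genericity statement (the partial derivatives of a general such $f$ restricted to $H$ span $\mathcal{S}_{d-1}(H)$ against each normal direction), and once it holds at one point it holds generically, yielding both dominance of $\pi_2$ and smoothness of the general fiber of the stated dimension. The hypotheses $d \geq 3$ and $n \geq 1$ enter only to rule out degenerate low-degree behavior in this linear-algebra step and to ensure $\mathbb{P}(\mathcal{S}_d)$ is large enough; everything else is the fiber-dimension bookkeeping above.
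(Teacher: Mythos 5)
The paper does not prove this theorem --- it is quoted verbatim from Harris \cite[Theorem~12.8]{Harris:First}, so there is no internal proof to compare against. Your outline reproduces the standard incidence-correspondence argument (and it is indeed the one Harris uses): form $\Xi \subseteq \bbG(r,n)\times\bbP(\calS_d)$, compute $\dim\Xi$ via the projection to the Grassmannian, and observe that the $\delta<0$ case follows immediately since $\pi_2$ then cannot dominate. That part is correct and complete.

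The gap is exactly where you flag it --- the $\delta\geq 0$ case --- but your resolution does not close it. Writing $f=\sum_{i>r} x_i g_i$ in coordinates with $H=\{x_{r+1}=\dots=x_n=0\}$, the tangent map $H^0(H,N_{H/\bbP^n})\to H^0(H,\mathcal{O}_H(d))$ is
\[
(\ell_{r+1},\dots,\ell_n)\ \longmapsto\ \sum_{i=r+1}^n \ell_i\,g_i|_H,
\]
so surjectivity is precisely the statement that the ideal $(g_{r+1}|_H,\dots,g_n|_H)\subseteq\Bbbk[x_0,\dots,x_r]$ has degree-$d$ part of dimension $\min\{(n-r)(r+1),\binom{d+r}{d}\}$, i.e.\ that $n-r$ general forms of degree $d-1$ in $r+1$ variables have no linear syzygies. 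This is not a ``straightforward linear-algebra genericity statement''; it is the Hochster--Laksov theorem \cite[Theorem~1]{HL87:LinearSyzygies}, which the paper itself invokes for exactly this purpose (see its Remark on Hochster--Laksov and the discussion preceding the statement of the theorem). Your parenthetical justification --- ``the partial derivatives of a general such $f$ restricted to $H$ span $\calS_{d-1}(H)$'' --- is moreover false in general: in the classical case $(n,d,r)=(3,3,1)$ of the $27$ lines on a cubic surface one has $n-r=2$ general quadrics on a line $H\cong\bbP^1$, which span only a $2$-dimensional subspace of the $3$-dimensional $\calS_2(H)$; nevertheless the map above is surjective because the ideal they generate has full degree-$3$ part. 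You should replace the hand-waving with an explicit appeal to Hochster--Laksov (or an equivalent no-linear-syzygies result), after which the dominance of $\pi_2$, the smoothness, and the exact dimension $\delta(n,d,r)$ all follow as you describe.
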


Using Theorem \ref{thm:nonemptinessFano}, the general slice rank in $\calS_d$ can be computed.

\begin{corollary}\label{cor:maximalSlice}
Let $n \geq 1$ and $d \geq 3$ be integers. The general slice rank in $\calS_d$ is
\[
\slrk^{\circ}_{n,d} = \min\left\{r \in \bbZ_{\geq0} \,\middle|\, r(n+1-r) \geq \binom{d + n - r}{d}\right\}.
\]
\end{corollary}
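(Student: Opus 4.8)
The plan is to combine the Fano-variety description of the slice rank in \eqref{eq:slicerank_Fano} with the numerical nonemptiness criterion of Theorem~\ref{thm:nonemptinessFano}, via the change of variables $k = n-r$. First I would recall that $\slrk^{\circ}_{n,d}$ is by definition the slice rank of a general form $f \in \calS_d$, and that by \eqref{eq:slicerank_Fano} this equals $\min\{r \in \bbZ_{\geq 0} \mid F_{n-r}(X_f) \neq \emptyset\}$. As observed above, a general form in $n+1$ variables has slice rank at most $n$, so this minimum is attained at some $r$ with $0 \leq r \leq n$; hence it suffices to decide, for each such $r$, whether $F_{n-r}(X_f)$ is nonempty for general $f$.

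Next I would apply Theorem~\ref{thm:nonemptinessFano} with dimension parameter $k := n-r \in \{0,1,\dots,n\}$ (an admissible value): for general $f$, the Fano variety $F_{n-r}(X_f)$ is nonempty precisely when $\delta(n,d,n-r) \geq 0$. Substituting $k = n-r$ into $\delta(n,d,k) = (k+1)(n-k) - \binom{d+k}{d}$ gives
\[
\delta(n,d,n-r) = (n-r+1)\,r - \binom{d+n-r}{d} = r(n+1-r) - \binom{d+n-r}{d},
\]
so the nonemptiness condition for $F_{n-r}(X_f)$ is exactly $r(n+1-r) \geq \binom{d+n-r}{d}$. Since only the finitely many values $r = 0,1,\dots,n$ are in play, one dense open subset of $\calS_d$ serves as the locus of "general" forms simultaneously for all of them.

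Finally I would note that $r = n$ always satisfies this inequality, because $n(n+1-n) = n \geq 1 = \binom{d}{d}$; therefore the minimum over all $r \in \bbZ_{\geq 0}$ of the condition $r(n+1-r) \geq \binom{d+n-r}{d}$ is attained at some $r_0 \leq n$, and combining the two previous steps yields $\slrk^{\circ}_{n,d} = r_0 = \min\{r \in \bbZ_{\geq 0} \mid r(n+1-r) \geq \binom{d+n-r}{d}\}$, as claimed. I do not expect a real obstacle in this argument; the only points deserving a moment's care are that the relevant parameter range is $0 \leq n-r \leq n$, so that Theorem~\ref{thm:nonemptinessFano} is applied with a valid argument, and that $r = n$ always lies in the solution set, so that restricting the minimum to $r \leq n$ does not change its value.
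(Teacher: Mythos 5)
Your proposal is correct and follows essentially the same route as the paper: apply the Fano-scheme characterization \eqref{eq:slicerank_Fano} of the slice rank together with the nonemptiness criterion of Theorem~\ref{thm:nonemptinessFano} under the substitution $k = n-r$. The extra remarks you add (that $0 \leq n-r \leq n$ keeps the dimension parameter admissible, that $r = n$ always satisfies the inequality, and that a single dense open set of forms works for all finitely many $r$) are small points of care that the paper's terse proof leaves implicit.
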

\begin{proof}
Let $f \in \calS_d$ be a general form. By Theorem \ref{thm:nonemptinessFano}, $F_{n-r}(X_f) \neq \emptyset$ if and only if
\[
(n-r+1)r - \binom{d + n - r}{d} \geq 0.
\]
By \eqref{eq:slicerank_Fano}, this concludes the proof.
\end{proof}

\begin{example}
By \cite[Theorem 12.8]{Harris:First}, if $d > 2n-3$, then general degree-$d$ hypersurfaces in $\bbP^n_\Bbbk$ contain no lines. So in this case, it follows that the general slice rank is equal to $n$.\exampleend
\end{example}

\begin{remark}
If we consider a family of polynomials $\calF = \{f_1,\ldots,f_s\} \in \prod_{i=1}^s \calS_{d_i}$, then we can ask for the \textit{simultaneous} slice rank, that is, the minimal set of linear forms $\{\ell_1,\ldots,\ell_r\}$ such that there exist $g_{i,j}$ with $f_i = \sum_{j=1}^r \ell_jg_{i,j}$ for all $i$. This is the minimal codimension of a linear space contained in the variety $X_\calF := \{f_1 = \ldots = f_s = 0\}$. Generically, if $s \leq n+1$, the forms in $\calF$ may be assumed to form a regular sequence. This means that $X_\calF$ is a complete intersection. Fano varieties of linear spaces in complete intersections have also been studied extensively and we have again a result analogous to Theorem \ref{thm:nonemptinessFano}. See for example \cite[Corollary 2.2]{Bor90:DeformingVarieties} or the recent survey \cite[Theorem 2.6]{CZ19:LinesConics}. Consequently, the value of the general simultaneous slice rank is known: fix positive integers $d_1,\ldots,d_s \geq 3$ and $n \geq 1$, then the simultaneous slice rank of a general family $\calF \in \prod_{i=1}^s \calS_{d_i}$ is
\begin{equation}\label{eq:simultaneousSliceRank}
\slrk_{n,(d_1,\ldots,d_s)}^\circ := \min\left\{ r \in \bbZ_{\geq0} \,\middle|\, r(n+1-r) \geq \sum_{i=1}^s \binom{d_i +n -r}{ d_i}\right\}.
\end{equation}
In this case, the set of polynomial vectors with bounded simultaneous slice rank is also a variety. In particular, the right-hand-side of \eqref{eq:simultaneousSliceRank} is also the maximal simultaneous slice rank in $\prod_{i=1}^s \calS_{d_i}$.\remarkend
\end{remark}

\section{The strength of a form}\label{sec:strength_geometry}

Slice decompositions are special examples of \textit{strength decompositions}. 

\begin{definition}\label{def:strength}
Let $f \in \calS_d$. A \textbf{strength decomposition} of $f$ is an expression
\[
f = g_1h_1 + \ldots + g_rh_r,
\]
where $g_i \in \calS_{d_i}, h_i \in \calS_{d-d_i}$. The minimal length $r$ of a strength decomposition of $f$ is called the \textbf{strength} of $f$. We denote it by $\str(f)$.
\end{definition}

Clearly, we have $\str(f) \leq \slrk(f)$ for all $f\in\calS_d$. Moreover, when $d=2,3$ or $n=1$, the strength and slice rank coincide for every polynomial. Conjecture \ref{conj:genStr=genSl} says that generically this also holds for forms of higher degree. Note however that, in any degree $\geq 4$, the gap between strength and slice rank can be arbitrarily large for particular forms.

\begin{proposition}\label{prop:unboundedslicerank}
For integers $d \geq 4$ and $2 \leq i \leq d-2$, the set of slice ranks of forms $f\cdot g$ where $f,g$ are forms of degrees $i,d-i$, respectively, is unbounded. 
\end{proposition}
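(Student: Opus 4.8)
The plan is to exploit Proposition~\ref{prop:slicerankreducible}, which tells us that $\slrk(f\cdot g) = \min(\slrk(f),\slrk(g))$ whenever $f,g$ both have degree $\geq 2$. So to show the set of slice ranks of such products is unbounded, it suffices to produce, for each target value $N$, a choice of forms $f,g$ (of the prescribed degrees $i$ and $d-i$, in some number of variables we are free to grow) both of slice rank $\geq N$. The cleanest way to do this is to vary the number of variables: by Corollary~\ref{cor:maximalSlice}, the general slice rank $\slrk_{m,e}^\circ$ of a form of fixed degree $e\geq 3$ in $m+1$ variables is $\min\{r : r(m+1-r)\geq \binom{e+m-r}{e}\}$, and one checks that this quantity tends to infinity as $m\to\infty$ (for fixed $e$): indeed if $r$ stays bounded while $m$ grows, the right-hand side $\binom{e+m-r}{e}$ grows like $m^e$ while the left-hand side $r(m+1-r)$ grows only like $m$, so the inequality eventually fails, forcing $r$ to grow. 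Hence for any $N$ there is an $m$ with $\slrk_{m,e}^\circ \geq N$.

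The one wrinkle is the degree-$2$ case: the factors $f,g$ have degrees $i$ and $d-i$ with $2\le i\le d-2$, and either of these could equal $2$. For $e\ge 3$ we use Corollary~\ref{cor:maximalSlice} as above. For $e=2$ we instead use the computation from the earlier example that the general slice rank in $\calS_2$ (in $m+1$ variables) equals $\lceil (m+1)/2\rceil$, which again tends to infinity with $m$. So in all cases, for each $e\in\{i,d-i\}$ and each $N$, we may pick $m$ large enough that a general form of degree $e$ in $m+1$ variables has slice rank $\geq N$. Choosing a common $m$ that works for both $e=i$ and $e=d-i$, and taking $f\in\calS_i$, $g\in\calS_{d-i}$ general in $m+1$ variables, Proposition~\ref{prop:slicerankreducible} gives $\slrk(f\cdot g)=\min(\slrk(f),\slrk(g))\geq N$. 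Since $N$ was arbitrary, the set of slice ranks of such products $f\cdot g$ is unbounded.

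I expect the main (and only mildly nontrivial) obstacle to be the asymptotic estimate that $\slrk_{m,e}^\circ\to\infty$ as $m\to\infty$ for fixed $e\ge 2$ — but this is genuinely elementary, being just the observation that a bounded $r$ cannot satisfy $r(m+1-r)\ge\binom{e+m-r}{e}$ once $m$ is large, since the binomial is a degree-$e\geq 2$ polynomial in $m$ while $r(m+1-r)$ is linear in $m$. Everything else is a direct appeal to Proposition~\ref{prop:slicerankreducible} and the freedom to enlarge the ambient polynomial ring, since nothing in the statement fixes the number of variables.
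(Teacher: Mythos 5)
Your proof is correct, and it follows the same structural skeleton as the paper's: use Proposition~\ref{prop:slicerankreducible} to reduce the slice rank of a product to the minimum of the slice ranks of the factors, and then let the number of variables grow. The difference is in which polynomials you feed into this machine. The paper's proof is fully constructive: it takes $f,g$ to be Fermat polynomials $f_{n,i}$ and $f_{n,d-i}$ in $n+1$ variables, whose slice rank was computed in Example~\ref{example:Fermat} to equal $\lceil(n+1)/2\rceil$ (with the lower bound coming from smoothness and Starr's result on linear spaces in smooth hypersurfaces), so the product has slice rank exactly $\lceil(n+1)/2\rceil\to\infty$. You instead take $f,g$ to be \emph{general} forms and appeal to Corollary~\ref{cor:maximalSlice} (together with the $d=2$ case handled separately), then give the asymptotic argument that $\slrk_{m,e}^\circ\to\infty$ because a bounded $r$ cannot satisfy $r(m+1-r)\geq\binom{e+m-r}{e}$ for $m$ large. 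Both are sound; your route rests on the Fano-variety dimension count behind Corollary~\ref{cor:maximalSlice}, whereas the paper's rests on the elementary Fermat computation and only needs existence of a single family of high-slice-rank factors. The paper's choice has the nice side benefit of exhibiting explicit witnesses and simultaneously illustrating that these products have strength $1$, which is relevant to the surrounding discussion (see Remark~\ref{rmk:reducibility}); your argument is slightly more flexible in that it does not require knowing the exact slice rank of any particular form, only that the general value is unbounded.
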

\begin{proof}
Consider the polynomial $g_{n,i} = f_{n,d}\cdot f_{n,d-i}$, where $f_{n,j}$ is the degree-$j$ Fermat polynomial in $n+1$ variables. As we have seen in Example \ref{example:Fermat}, the polynomial $f_{n,j}$ has slice rank equal to  $\left\lceil \frac{n+1}{2}\right\rceil$. Therefore, while the strength of $g_{n,i}$ is equal to $1$, we have that the slice rank is~$\left\lceil \frac{n+1}{2}\right\rceil$ by Proposition~\ref{prop:slicerankreducible}.
\end{proof}

\begin{example}\label{example:Fermat2}
Again, let $f = x_0^d + \ldots + x_n^d$ be the degree-$d$ Fermat polynomial. Then 
\[
\str(f)\leq\slrk(f)=\left\lceil \frac{n+1}{2} \right\rceil.
\]
As pointed out in the introduction of \cite{AH:Stillman}, the partial derivatives of a polynomial $f = \sum_{i=1}^r g_ih_i$ are contained in the ideal $(g_1,h_1,\ldots,g_r,h_r)$ with $2r$ generators. So we get the same lower bound for the strength of forms defining smooth nondegenerate hypersurfaces as for the slice rank. Therefore also $\str(f) = \left\lceil \frac{n+1}{2} \right\rceil$.\exampleend
\end{example}

We do not know whether, like the slice rank, the strength of even degree Fermat polynomials over $\bbR$ and over $\bbC$ differ (in even degree $d\geq 4$). 

The notion of strength behaves very differently than the one of slice rank. For example, if $f = \sum_{i=1}^r g_ih_i$, then the variety $\{g_1 = \ldots =g_r = 0\}$ is clearly contained in $X_f$, but there is no reason to expect that it has codimension $r$ since the $g_i$ might not define a complete intersection if they have degrees higher than $2$. In the paper \cite{BBOV:notClosed}, together with Ballico and Ventura, we underline another big difference as we show that the set of forms with bounded strength is not necessarily Zariski-closed.

\subsection{Secant varieties of varieties of reducible forms}\label{sec:geom_strength}
In \cite{CCG08:CIinHypersurfaces}, the authors considered strength decompositions in order to understand which complete intersections can be contained in general hypersurfaces. This is motivated by the following remark obtained from a private communication with E.~Ballico and E. Ventura.

\begin{remark}
Although a decomposition $f = \sum_{i=1}^r g_ih_i$ does not always imply that the subvariety 
\[
\{g_1 = \ldots = g_r = 0\} \subseteq X_f
\]
is a complete intersection, it may be assumed if we consider a general form: let $r$ be the general strength and $U \subseteq \bbP(\calS_d)$ be the dense subset of forms of strength $r$. Let $U' \subseteq U$ be the subset of forms having a strength decomposition defined by a complete intersection. Then, $U'$ is dense in $U$ and, in particular, the general hypersurface contains a complete intersection of codimension $r$. Indeed, let $f = \sum_{i=1}^r g_ih_i \in U \setminus U'$ with $\deg(g_i) = d_i$. Then, since the set of regular sequences is dense, there is {a} $(u_1,\ldots,u_r) \in \calS_{d_1}\times \cdots \times \calS_{d_r}$ and an $\epsilon>0$ such that $(g_1+tu_1,\ldots,g_r+tu_r)$ {is a regular sequence for each} $t \in (0,\epsilon]$. In particular, we have $f = \lim_{t \rightarrow 0} f_t$  where 
\[
f_t = \sum_{i=1}^r \left(g_i+tu_i\right)h_i \in U'.
\]
\remarkend
\end{remark}

In other words, where the general slice rank measures the smallest codimension of a linear space contained in a general hypersurface, the general strength measures the smallest codimension of a complete intersection contained in a general hypersurface.
	
In \cite{CCG08:CIinHypersurfaces}, the authors approach the problem by studying secant varieties of the varieties of reducible forms. Here, we do the same. For $i = 1,\ldots,\left\lfloor d/2 \right\rfloor$, consider the \textit{variety of degree-$d$ forms with a factor of degree~$i$}, i.e., the variety
$X_{(i,d-i)} := \{ [g\cdot h] \mid[g] \in \bbP(\calS_i), [h] \in\bbP(\calS_{d-i})\} \subseteq \bbP(\calS_d)$. We define the \textbf{variety of reducible forms} as their union:
\[
X_{\rm red} := \bigcup_{i=1}^{\lfloor d/2\rfloor} X_{(i,d-i)} \subseteq \bbP(\calS_d).
\]
Note that 
\[
\dim X_{(i,d-i)} = \dim\bbP(\calS_i)+\dim\bbP(\calS_{d-1})=\binom{n+i}{n} + \binom{n+d-i}{n} -2 
\]
and $\dim X_{\rm red} = \dim X_{(1,d-1)}$; see \cite[Proposition~7.2]{C+19:SecantReducible}.

In order to give a better geometrical description of the variety of forms of bounded (border) strength, we recall the definitions of the \textit{join} of algebraic varieties and of \textit{secant varieties}. Given algebraic varieties $X_1,\ldots,X_r \subseteq \bbP^n_\Bbbk$, the \textit{join} $J(X_1,\ldots,X_r)$ of $X_1,\ldots,X_r$ is the Zariski-closure of the union of all linear spaces spanned by $r$-tuples of distinct points in $X_1\times\cdots\times X_r$. In the particular case where $X_1 = \ldots = X_r = X$, the join $\sigma_r(X):= J(X,\ldots,X)$ is called the \textit{$r$-th secant variety} of $X$. 

By definition, we see that $\sigma_r(X_{\rm red}) = \overline{\left\{[f] \in \bbP(\calS_d) \,\middle|\, \str(f) \leq r \right\}} \subseteq \bbP(\calS_d)$. The variety of reducible forms is highly reducible: we have 
\begin{equation}\label{eq:reducibleFormsJoins}
\sigma_r(X_{\rm red}) = \bigcup_{1\leq a_1\leq\cdots\leq a_r\leq\lfloor d/2\rfloor} J(X_{(a_1,d-a_1)},\ldots,X_{(a_r,d-a_r)}).
\end{equation}
The general strength $r$ corresponds to the first secant variety $\sigma_r(X_{\rm red})$ that fills the ambient space. 

\begin{remark}\label{rmk:reducibility}
From the proof of Proposition \ref{prop:unboundedslicerank}, we deduce that for {$1\leq r< \left\lceil \frac{n+1}{2} \right\rceil$ the variety $\sigma_r(X_{\rm red})$ is not contained in $\sigma_r(X_{(1,d-1)})$, and hence reducible. Indeed, consider $g = f_{{n},i} \cdot f_{{n},d-i}$. The slice rank of $g$ is equal to $\left\lceil \frac{n+1}{2} \right\rceil>r$ {and, since the set of forms of bounded rank is Zariski-closed, we deduce $g \not\in\sigma_r(X_{(1,d-1)})$. However, the strength of $g$ is clearly equal to $1\leq r$; hence, we have $g \in X_{(i,d-i)} \subseteq \sigma_r(X_{(i,d-i)})$.} This idea can be extended by considering products $g\cdot h$ where $g,h$ are forms with generic slice ranks in degrees $i,d-i$, respectively. Note however that this idea cannot prove the reducibility of $\sigma_r(X_{\rm red})$ for $\max\{\slrk_{2,n}^\circ,\ldots,\slrk_{\lfloor d/2\rfloor,n}^\circ\}\leq r<\slrk_{d,n}^\circ$, which is a nonempty range in general.}
\remarkend
\end{remark}

In \cite[Remark 7.7]{C+19:SecantReducible}, the authors conjecture the following.

\begin{conjecture*}[Conjecture \ref{conj:dimEquality_intro}]
For every $r$, the dimension of the $r$-th secant variety of the variety of reducible forms is equal to the dimension of the component given by the $r$-th secant variety of the variety of forms with a linear factor. I.e.,
\[
\dim \sigma_r (X_{\rm red}) = \dim \sigma_r (X_{(1,d-1)}).
\]
\end{conjecture*}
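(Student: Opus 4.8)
The plan is to reduce the equality $\dim\sigma_r(X_{\rm red}) = \dim\sigma_r(X_{(1,d-1)})$ to a statement about Hilbert functions of ideals generated by general forms, and then to extract that statement from a weakened Fröberg-type inequality in the relevant degree range. Concretely, by the decomposition \eqref{eq:reducibleFormsJoins}, it suffices to show that for every admissible tuple $(a_1,\ldots,a_r)$ with $1\le a_i\le\lfloor d/2\rfloor$ one has $\dim J(X_{(a_1,d-a_1)},\ldots,X_{(a_r,d-a_r)}) \le \dim\sigma_r(X_{(1,d-1)})$, since the reverse inequality $\dim\sigma_r(X_{(1,d-1)})\le\dim\sigma_r(X_{\rm red})$ is automatic from $X_{(1,d-1)}\subseteq X_{\rm red}$. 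The dimension of a join is computed at a general point by Terracini's lemma: the tangent space to $J(X_{(a_1,d-a_1)},\ldots,X_{(a_r,d-a_r)})$ at a general point $[\sum_i g_ih_i]$ is (the projectivization of) the degree-$d$ part of the ideal $(g_1,h_1,\ldots,g_r,h_r)$, where $\deg g_i = a_i$. So the codimension of the join in $\bbP(\calS_d)$ is $\binom{n+d}{n} - \dim_{\Bbbk}(g_1,h_1,\ldots,g_r,h_r)_d$, and for general forms this is governed by the Hilbert function of an ideal generated by $2r$ general forms of degrees $a_1,d-a_1,\ldots,a_r,d-a_r$ evaluated in degree $d$.

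The next step is to translate the desired inequality into the language of power series. For an ideal $I$ generated by general forms of degrees $e_1,\ldots,e_m$ in $n+1$ variables, Fröberg's conjecture predicts $\HS(\calS/I) = \left[\frac{\prod_{j}(1-t^{e_j})}{(1-t)^{n+1}}\right]_+$, where $[\cdot]_+$ truncates the series at its first nonpositive coefficient; the weak version (Conjecture~\ref{conj:wFroberg}) replaces equality by a coefficientwise inequality $\HS(\calS/I)\le[\cdot]_+$, i.e. the general ideal is at least as large as Fröberg predicts. Assuming the weak Fröberg inequality — equivalently property \eqref{eq:propertyP} — for the specific degree patterns $(a,d-a)^{\times r}$ appearing here, one obtains a lower bound on $\dim(g_1,h_1,\ldots,g_r,h_r)_d$, hence an upper bound on $\codim J(\ldots)$, which must be compared against the exactly known value $\codim\sigma_r(X_{(1,d-1)})$ coming from the slice-rank computation (the Hochster–Laksov/Fano-variety result, Theorem~\ref{thm:nonemptinessFano} and Corollary~\ref{cor:maximalSlice}). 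So the core of the argument is: verify that the weak Fröberg inequality holds for the relevant ideals when $d\le 7$ or $d=9$ (Section~\ref{sec:Froberg}), and then show the resulting numerical bound on the Fröberg-predicted power series is strong enough to force $\dim J(X_{(a_1,d-a_1)},\ldots,X_{(a_r,d-a_r)})\le\dim\sigma_r(X_{(1,d-1)})$ for all $n$ and all tuples.

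I expect the main obstacle to be this last purely arithmetic comparison, carried out uniformly in $n$. For fixed small $d$ there are only finitely many choices of $(a_1,\ldots,a_r)$ up to the partition structure, but $r$ and $n$ range over all positive integers, so one cannot simply check finitely many cases: one must understand the asymptotic behaviour, as $n\to\infty$ with $r$ possibly growing, of the truncated power series $\left[\frac{(1-t^{a})^{?}(1-t^{d-a})^{?}\cdots}{(1-t)^{n+1}}\right]_+$ evaluated at $t^d$, and show its relevant coefficient never exceeds the one coming from $2r$ general linear-plus-degree-$(d-1)$ forms — i.e. that adding a factor $\ell_i$ of degree $1$ rather than $a_i\ge 2$ only ever helps. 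This is precisely the content that the paper isolates as Lemma~\ref{lemma:arithmeticLemma}: a delicate estimate on where the truncation point of the Fröberg series falls and how the coefficient there behaves, which is then fed into the proof of Theorem~\ref{thm:main} in Section~\ref{sec:proofMain}. The degree restriction $d\le 7$, $d=9$ enters both because the weak Fröberg inequality is only known (or only verifiable by the methods of Section~\ref{sec:Froberg}) for those degrees, and because the arithmetic lemma's estimates degrade outside that range; the excluded degree $8$ (and $d\ge 10$) is exactly where one or the other ingredient is currently missing.
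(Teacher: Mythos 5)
Your proposal follows the same strategy as the paper: decompose $\sigma_r(X_{\rm red})$ into joins via~\eqref{eq:reducibleFormsJoins}, compute tangent spaces with Terracini's Lemma to reduce to a Hilbert-function comparison, control the Hilbert function by a Fr\"oberg-type inequality (Section~\ref{sec:Froberg}), and close with an arithmetic estimate on the resulting power-series coefficients uniformly in $n$ (Lemma~\ref{lemma:arithmeticLemma}). The core ingredients you isolate are exactly the ones the paper uses, and your diagnosis of why $d=8$ and $d\geq 10$ drop out is correct.

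However, there is a direction error in the middle of your argument that, if followed literally, would derail it. You write the weak Fr\"oberg inequality as $\HS(\calS/I) \leq \lceil\cdots\rceil$ and gloss it as ``the general ideal is at least as large as Fr\"oberg predicts''; Conjecture~\ref{conj:wFroberg} actually states $\coeff_d\left(\HS_{\calS/I}\right) \geq \coeff_d\left(\lceil\cdots\rceil\right)$, i.e.\ the general quotient is at least as big as predicted, so the general ideal is \emph{at most} as big. This propagates: you then claim a ``lower bound on $\dim I_d$, hence an upper bound on $\codim J$,'' and try to compare an upper bound on $\codim J$ against the known value $\codim\sigma_r(X_{(1,d-1)})$. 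But what you need is $\codim J \geq \codim\sigma_r(X_{(1,d-1)})$, which requires a \emph{lower} bound on $\codim J$ — equivalently an upper bound on $\dim I_d$. That lower bound is precisely what property~\eqref{eq:propertyP} gives: $\codim J = \HF_{\calS/I}(d) \geq f_{n,d}(\ldots)$, after which Lemma~\ref{lemma:arithmeticLemma} supplies $f_{n,d}(m(\ell_1),\ell_2,\ldots) \geq f_{n,d}(m(r),0,\ldots,0) = \codim\sigma_r(X_{(1,d-1)})$. With the direction corrected, the chain closes.

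A second, smaller imprecision: property~\eqref{eq:propertyP} is not equivalent to $\wFC$. Property~P compares $\HF_{\calS/I}(d)$ to the \emph{unceiled} Fr\"oberg coefficient, whereas $\wFC$ compares to the ceiled one; P is strictly stronger when some earlier coefficient of the predicted series goes negative. The paper works with P rather than $\wFC$ precisely because the induction in Lemma~\ref{lemma:wFC} (adding one generator at a time, trading property Q in a lower degree for property P in degree $d$) requires the unceiled version to pass cleanly through the short exact sequence; the ceiling would break the telescoping. Keeping the P/Q distinction explicit is what makes Propositions~\ref{prop:deg4}--\ref{prop:deg9} go through.
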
 
In \cite[Theorem 7.4]{C+19:SecantReducible}, the authors prove this conjecture for $2r \leq n$. Conjecture \ref{conj:dimEquality_intro} implies Conjecture \ref{conj:genStr=genSl} on the general equality of slice rank and strength. Indeed, the general slice rank is equal to $r$ if and only if the $r$-th secant variety of $X_{\rm red}$ is the first one filling the ambient space and, assuming Conjecture \ref{conj:dimEquality_intro}, this is equivalent to saying that the $r$-th secant variety of $X_{(1,d-1)}$ is the first one filling the ambient space, i.e., the general form has slice rank equal to $r$.

Following a standard approach to studying dimensions of secant varieties, we look at the tangent spaces of the components to $\sigma_r(X_{\rm red})$ at general points. We can compute these spaces using the classical \textit{Terracini's Lemma}.

\begin{lemma}[Terracini's Lemma, \cite{Ter11}]
Let $X_1,\ldots,X_r$ be algebraic varieties. Let $p_1\in X_1$, \ldots , $p_r\in X_r$ and $q \in \langle p_1,\ldots,p_r\rangle$ be general points. Then $T_qJ(X_1,\ldots,X_r) = \langle T_{p_1}(X_1),\ldots,T_{p_r}(X_r)\rangle$.
\end{lemma}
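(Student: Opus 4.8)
The plan is to use the classical \emph{abstract join} construction, reducing the statement to a differential computation together with generic smoothness. Working with affine cones, write $\widehat{X}_i \subseteq \mathbb{A}^{n+1}$ for the affine cone over $X_i \subseteq \bbP^n_\Bbbk$ and consider the addition morphism
\[
\phi\colon \widehat{X}_1 \times \cdots \times \widehat{X}_r \longrightarrow \mathbb{A}^{n+1},\qquad (v_1,\ldots,v_r)\longmapsto v_1+\cdots+v_r.
\]
The first step is to identify the closure of $\im\phi$ with the affine cone $\widehat{W}$ over $W := J(X_1,\ldots,X_r)$: by definition a general point of $W$ lies on a span $\langle p_1,\ldots,p_r\rangle$ with $p_i \in X_i$, and the cone over such a span is exactly the set of sums of representatives of the $p_i$, i.e.\ the image of $\phi$ over that configuration. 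Hence $\phi$ factors through a dominant morphism $\psi\colon \widehat{X}_1 \times \cdots \times \widehat{X}_r \to \widehat{W}$.

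The second step is the key computation. Since $\phi$ is the restriction of a linear map, its differential at $(v_1,\ldots,v_r)$ is again $(w_1,\ldots,w_r)\mapsto w_1+\cdots+w_r$, now on $T_{v_1}\widehat{X}_1 \oplus \cdots \oplus T_{v_r}\widehat{X}_r$; since each $T_{v_i}\widehat{X}_i$ is a linear subspace of $\mathbb{A}^{n+1}$ --- the affine cone over the embedded projective tangent space $T_{p_i}X_i$ --- the image of this differential is the linear span $T_{v_1}\widehat{X}_1 + \cdots + T_{v_r}\widehat{X}_r$. By generic smoothness, on a dense open subset $U$ of the irreducible source this differential is surjective onto $T_{\psi(v_1,\ldots,v_r)}\widehat{W}$, so for $(v_1,\ldots,v_r)\in U$ one gets the equality of linear subspaces $T_{\psi(v_1,\ldots,v_r)}\widehat{W} = \sum_i T_{v_i}\widehat{X}_i$. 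The third step is to projectivize: using that $T_q\widehat{W}$ is the cone over $T_{[q]}W$ and that the cone over a projective span $\langle L_1,\ldots,L_r\rangle$ is the sum of the cones $\widehat{L}_i$, this becomes $T_qJ(X_1,\ldots,X_r) = \langle T_{p_1}X_1,\ldots,T_{p_r}X_r\rangle$.

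The point needing the most care --- the main obstacle --- is reconciling the two flavours of ``general'': the statement fixes general $p_i \in X_i$ and then a general $q$ in their span, whereas generic smoothness only provides a dense open $U \subseteq \widehat{X}_1 \times \cdots \times \widehat{X}_r$. I would handle this by observing that general $p_1,\ldots,p_r$ are linearly independent, so that $q$ determines, up to scaling, a unique point $(v_1,\ldots,v_r)$ of $\widehat{X}_1 \times \cdots \times \widehat{X}_r$ lying over $(p_1,\ldots,p_r)$ and above $q$; as the $p_i$ and $q$ range over dense open sets the corresponding tuple sweeps out a dense subset of the irreducible variety $\widehat{X}_1 \times \cdots \times \widehat{X}_r$, hence eventually lands in $U$. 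Two minor preliminaries: one first replaces each $X_i$ by the irreducible component through $p_i$, and one is tacitly assuming $\psi$ separable --- automatic in characteristic zero, the only setting relevant here --- which is what makes generic smoothness (surjectivity of the differential of a dominant morphism at a general point of the source) available.
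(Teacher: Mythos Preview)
The paper does not prove Terracini's Lemma; it merely states it with a citation to Terracini's original 1911 paper and then applies it. So there is no paper proof to compare against.

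Your argument is the standard modern proof via the affine-cone addition map and generic smoothness, and it is correct. A couple of small remarks. First, your caveat about characteristic is apt: the paper works over an arbitrary algebraically closed field $\Bbbk$, and the equality in Terracini's Lemma can genuinely fail in positive characteristic (only the inclusion $\langle T_{p_i}X_i\rangle \subseteq T_qW$ is automatic, coming from the factorisation of $\phi$ through $\widehat{W}$); the paper silently uses the lemma as stated, so your restriction to characteristic zero is the honest hypothesis. Second, your reconciliation of the two notions of ``general'' is slightly more delicate than you indicate: the cleanest formulation is that generic smoothness yields a dense open $U$ in the source, and since $\psi$ is dominant the image $\psi(U)$ contains a dense open of $\widehat{W}$; the statement then holds for any $(p_1,\ldots,p_r,q)$ arising from a tuple in $U$, which is exactly the meaning of ``general $p_i$ and general $q$ in their span''. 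The linear-independence step you invoke is not strictly needed --- what matters is that the incidence correspondence is irreducible and dominates both the product $\prod X_i$ and the join $W$, so a dense open condition on tuples translates to the stated genericity.
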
	

Consider the parametrization of $X_{(a,d-a)}$
\begin{eqnarray*}
\varphi\colon \calS_a \times \calS_{d-a} &\to& \calS_d\\
(g,h) &\mapsto& g\cdot h
\end{eqnarray*}
and fix a point $p = (g,h) \in \calS_a \times \calS_{d-a}$. For a tangent direction $(g',h')$, consider the line 
\[
L_p(t) = (g+tg',h+th').
\]
The tangent direction to the curve $\varphi(L_p(t))$ through the point $\varphi(p) = gh$ is $gh' + g'h$, indeed
\[
\left.\left(\frac{d}{dt} (g+tg')(h+th')\right)\right|_{t=0} = g'h + gh'.
\]
Therefore, the tangent space at $[g\cdot h] \in X_{(a,d-a)}$ is $T_{[gh]}X_{(a,d-a)} = \bbP((g,h)_{d})$ where $(g,h)_{d}$ is the homogeneous degree-$d$ part of the ideal $(g,h)$. Hence, by Terracini's Lemma, if $q$ is a general point on $J(X_{(a_1,d-a_1)},\ldots,X_{(a_r,d-a_r)})$, then
\begin{equation}\label{eq:tangentJoin}
T_qJ(X_{(a_1,d-a_1)},\ldots,X_{(a_r,d-a_r)}) = \bbP((g_1,h_1,\ldots,g_r,h_r)_{d}),
\end{equation}
where the $g_i$ and $h_i$ are general. So, in order to prove Conjecture~\ref{conj:dimEquality_intro}, we study the dimensions of homogeneous components of ideals generated by general forms or, equivalently, their \textit{Hilbert functions}.

\begin{remark}\label{rmk:HL}
The $r$-th secant variety $\sigma_r(X_{(1,d-1)})$ of the variety of forms with a linear factor is the variety of forms with slice rank $\leq r$. From Terracini's Lemma, we know that the tangent space to $\sigma_r(X_{(1,d-1)})$ at a general point corresponds to $(\ell_1,g_1,\ldots,\ell_r,g_r)_{d}$ where the $\ell_i$ are linear and the $g_i$ have degree~$d-1$. A result by Hochster and Laksov \cite{HL87:LinearSyzygies} states that ideals generated by general forms of degree $d-1$ do not have linear syzygies. As a consequence of this, it is possible to compute the dimension of $(\ell_1,g_1,\ldots,\ell_r,g_r)_{d}$ and hence of all secant varieties $\sigma_r(X_{(1,d-1)})$; see \cite[Proposition~5.6]{CCG08:CIinHypersurfaces}.\remarkend
\end{remark}

\section{Hilbert functions of general ideals and Fr\"oberg's conjecture}\label{sec:Froberg}

Given a homogeneous ideal $I \subseteq \calS$, the \textit{Hilbert function} of $\calS/I$ is the numerical function
\begin{eqnarray*}
\HF_{\calS/I}\colon \bbZ_{\geq0} &\to& \bbZ_{\geq0}\\
d &\mapsto& \dim (\calS/I)_{d} =:\HF_{\calS/I}(d)
\end{eqnarray*}
and the \textit{Hilbert series} of $\calS/I$ is the generating power series $\HS_{\calS/I}(t) := \sum_{d\geq0} \HF_{\calS/I}(d)t^d \in \bbZ[[t]]$.	Hilbert series are among the most interesting and well-studied algebraic invariants associated to an homogeneous ideal since they encode a lot of information of the algebraic variety defined by it. By~\eqref{eq:tangentJoin},
\[
\codim J(X_{(a_1,d-a_1)},\ldots,X_{(a_r,d-a_r)}) = \HF_{\calS/I}(d),
\]
where $I = (g_1,h_1,\ldots,g_r,h_r)$ is generated by general forms with $\deg(g_i) = a_i$ and $\deg(h_i) = d-a_i$.

The Hilbert series for ideals generated by general forms is prescribed by \textit{Fr\"oberg's conjecture}. 

\begin{notation}
Let $P=\sum_{i\geq 0} a_it^i\in\bbZ[[t]]$ and take
\[
b_i = 
\begin{cases}
a_i & \text{ if } a_j \geq 0, \text{ for } j \leq i \\
0 & \text{ otherwise}
\end{cases}
\] 
for every $i\geq0$. Then we write  $\coeff_d(P) := a_d$ for each integer $d\geq0$ and $\lceil P \rceil := \sum_{i\geq 0} b_i t^i$.
\end{notation}

\begin{conjecture}[{Strong Fr\"oberg's Conjecture (sFC), \cite{Fr85:HilbertSeriesGradedAlg}}]\label{conj:Froberg}
Let $f_1,\ldots,f_s$ be general forms in $n+1$ variables of degrees $d_1,\ldots,d_s$ and take $I = (f_1,\ldots,f_s)\subseteq\calS$. Then for each integer $d\geq0$, we have
\begin{equation}\label{eq:sFC}
\coeff_d\left(\HS_{\calS/I}(t)\right) = \coeff_d \left(\left\lceil \frac{\prod_{i=1}^s (1-t^{d_i})}{(1-t)^{n+1}}\right\rceil\right).
\end{equation}
\end{conjecture}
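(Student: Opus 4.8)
The statement \eqref{eq:sFC} is Fr\"oberg's conjecture in its strong form; it has been open since \cite{Fr85:HilbertSeriesGradedAlg}, so what follows is a description of the line of attack one is forced into, together with the partial results that any complete proof must absorb, rather than a finished argument.

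\emph{Step 1: reduce to maximal rank of multiplication maps in one critical degree.} The rank of the map $\bigoplus_{i=1}^s \calS_{d-d_i}\to\calS_d$, $(a_i)\mapsto\sum_i a_if_i$, is lower-semicontinuous on the parameter space of coefficient tuples, so $\HF_{\calS/I}(d)$ is upper-semicontinuous and the generic value of $\coeff_d(\HS_{\calS/I}(t))$ is the \emph{minimum} of $\HF_{\calS/(f_1,\dots,f_s)}(d)$ over all choices of forms of the prescribed degrees; in particular it does not exceed the value for any single chosen family. Conversely, from the image-containment estimate
\[
\HF_{\calS/(f_1,\dots,f_i)}(d)\ \ge\ \HF_{\calS/(f_1,\dots,f_{i-1})}(d)-\HF_{\calS/(f_1,\dots,f_{i-1})}(d-d_i)
\]
together with $\HF\ge 0$, iterated over $i$, one gets for every choice of forms and every $d$ the inequality $\coeff_d(\HS_{\calS/I}(t))\ge\coeff_d\!\big(\lceil\prod_{i=1}^s(1-t^{d_i})/(1-t)^{n+1}\rceil\big)$. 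Hence \eqref{eq:sFC} is equivalent to producing, for each $(n,d_1,\dots,d_s)$, one family $f_1,\dots,f_s$ for which each multiplication map $\cdot f_i\colon(\calS/(f_1,\dots,f_{i-1}))_{d-d_i}\to(\calS/(f_1,\dots,f_{i-1}))_d$ has maximal rank in every $d$; and it suffices to verify this up to the first degree $d^\star$ where the predicted coefficient becomes $0$, since past $d^\star$ maximality propagates automatically ($\calS/I$ being generated in degree $1$).

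\emph{Step 2: specialise the first $n+1$ forms to a monomial complete intersection.} I would replace $f_1,\dots,f_{n+1}$ by the regular sequence $x_0^{d_1},\dots,x_n^{d_{n+1}}$, leaving $f_{n+2},\dots,f_s$ generic; by the semicontinuity of Step~1 it is then enough to prove that the quotient of the Artinian monomial complete intersection $A=\Bbbk[x_0,\dots,x_n]/(x_0^{d_1},\dots,x_n^{d_{n+1}})$ by generic forms of degrees $d_{n+2},\dots,d_s$ has the maximal-rank Hilbert function. For $s\le n+1$ there is nothing to prove: $A$ itself computes the series, and $\lceil\cdot\rceil$ is inert because $\prod_i(1+t+\dots+t^{d_i-1})$ has nonnegative coefficients. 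For $s=n+2$ this is Stanley's theorem: taking the single extra form to be $\ell^{d_{n+2}}$ with $\ell=x_0+\dots+x_n$, the statement is exactly the Strong Lefschetz Property of $A$, which holds in characteristic $0$ by Hard Lefschetz on a product of projective spaces. The cases $n\le 2$ (Anick for $n=2$, elementary for $n\le 1$) and the equal-degree, single-degree-beyond-generation case (Hochster--Laksov, used already in Remark~\ref{rmk:HL}) are likewise known and must be recovered.

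\emph{Step 3: the general case and the obstruction.} For $s\ge n+3$ one needs a ``maximal rank property'' for $A$ modulo several generic forms: after killing generic forms of degrees $d_{n+2},\dots,d_{s-1}$, the quotient by the generic form of degree $d_s$ must still multiply with maximal rank. This is where the program stalls: for $n\ge 3$ the relevant monomial algebra is no longer Gorenstein, there is no torus-invariant projective variety carrying it as a cohomology ring, and Weak/Strong Lefschetz is known to fail for some complete and almost complete intersections in $\ge 4$ variables, so one cannot transport the needed property from a monomial model; the purely combinatorial alternative---choosing all $f_i$ to be monomials in a well-adapted coordinate system (Moreno-Soc\'ias, Pardue)---founders because no monomial ideal with the conjectural Hilbert function is known to exist outside the cases above. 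Thus, although a proof is forced to be structured as (1) semicontinuity, (2) reduction to a monomial complete intersection and to the critical degree $d^\star$, (3) the required Lefschetz/maximal-rank input, I expect step~(3) to be the genuine obstacle, and in full generality it remains open.
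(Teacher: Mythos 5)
The statement you were asked to prove is the Strong Fr\"oberg Conjecture (Conjecture \ref{conj:Froberg}), which the paper itself does not prove and does not claim to prove: it is quoted as an open conjecture, and only its known special cases ($s\leq n+1$, $s=n+2$, at most three variables, and degree $\min_i d_i+1$ via Hochster--Laksov) are ever used. Your proposal is likewise not a proof, as you state explicitly; as a survey of the standard line of attack it is broadly reasonable, but it cannot close the gap, which is the entire statement.

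Beyond that, one step of your outline is genuinely incorrect. In Step 1 you assert that iterating
\[
\HF_{\calS/(f_1,\dots,f_i)}(d)\ \geq\ \HF_{\calS/(f_1,\dots,f_{i-1})}(d)-\HF_{\calS/(f_1,\dots,f_{i-1})}(d-d_i)
\]
together with $\HF\geq 0$ yields, for \emph{every} choice of forms, the coefficient-wise lower bound $\coeff_d(\HS_{\calS/I}(t))\geq\coeff_d\left(\left\lceil \prod_{i}(1-t^{d_i})/(1-t)^{n+1}\right\rceil\right)$. By semicontinuity that claim is equivalent to the Weak Fr\"oberg Conjecture (Conjecture \ref{conj:wFroberg}), which is open; the iteration does not close because a lower bound in degree $d$ at step $i$ requires an \emph{upper} bound on $\HF_{\calS/(f_1,\dots,f_{i-1})}(d-d_i)$, not a lower bound. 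This sign problem is precisely why the paper's Lemma \ref{lemma:wFC} must combine property $\mathrm{P}$ \eqref{eq:propertyP} in degree $d$ with property $\mathrm{Q}$ \eqref{eq:propertyQ} for $s-1$ forms in degree $d-d_s$, and why Fr\"oberg could only establish the lexicographic version of the inequality. The true content of your Step-1 equivalence (that exhibiting a single family with maximal-rank multiplication maps suffices) is correct, but its justification goes through semicontinuity plus Fr\"oberg's lexicographic bound rather than the naive iteration. With that repaired, your Steps 2--3 correctly locate the genuine obstruction, and the honest conclusion stands: the statement remains a conjecture, unproven both in your proposal and in the paper.
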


When the number of generators $s$ is at most the number of variables $n+1$, a general ideal is a complete intersection and it is an easy exercise in commutative algebra to see that the formula holds with no need for brackets. The solution of the case $s = n+2$ is attributed to Stanley; see \cite[Proposition at pg.~367]{Ia:Compressed}. The sFC is also known in a few more cases: in two variables \cite{Fr85:HilbertSeriesGradedAlg}; in three variables \cite{An:Froberg}; and in degree $\min_i\{d_i\}+1$ \cite{HL87:LinearSyzygies}. Evidence pointing towards the conjecture is also given by an asymptotic result in \cite{Ne17}. See \cite{BFL18:surveyHF} for a recent survey on these questions.

In \cite{Ia97:InverseSystem3_ThinAlgebras}, a weaker version of Fr\"oberg's conjecture is also considered. 

\begin{conjecture}[{Weak Fr\"oberg's Conjecture (wFC), \cite{Ia97:InverseSystem3_ThinAlgebras}}]\label{conj:wFroberg}
Let $f_1,\ldots,f_s$ be general forms in $n+1$ variables of degrees $d_1,\ldots,d_s$ and take $I = (f_1,\ldots,f_s)\subseteq\calS$. Then for each integer $d\geq0$, we have
\begin{equation}\label{ineq:wFC}
\coeff_d\left(\HS_{\calS/I}(t)\right) \geq \coeff_d \left(\left\lceil \frac{\prod_{i=1}^s (1-t^{d_i})}{(1-t)^{n+1}}\right\rceil\right).
\end{equation}
\end{conjecture}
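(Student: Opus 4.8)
Write $P:=\prod_{i=1}^{s}(1-t^{d_i})/(1-t)^{n+1}$ and let $d^{\ast}$ be the smallest index at which $P$ has a negative coefficient, so that $\coeff_d(\lceil P\rceil)=\coeff_d(P)$ for $d<d^{\ast}$ and $\coeff_d(\lceil P\rceil)=0$ for $d\ge d^{\ast}$. For $d\ge d^{\ast}$ the inequality (\ref{ineq:wFC}) merely says $\HF_{\calS/I}(d)\ge 0$ and is vacuous, so the whole statement lives in the range $d<d^{\ast}$, where one must prove $\HF_{\calS/I}(d)\ge\coeff_d(P)$; equivalently, that the degree-$d$ part $I_d$ of the ideal has dimension at most the expected value $\binom{n+d}{n}-\coeff_d(P)$. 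Because the evaluation map $\bigoplus_i\calS_{d-d_i}\to\calS_d$, $(g_i)\mapsto\sum_i f_ig_i$, has lower-semicontinuous rank in the coefficients of the $f_i$, the general Hilbert function is the pointwise minimum over all choices of generators of the prescribed degrees; hence what is really needed is a \emph{uniform} lower bound on $\HF_{\calS/I}(d)$ valid for \emph{every} such choice, and the plan is to produce it by induction on the number $s$ of generators.

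\textbf{The induction.} A crude estimate already disposes of part of the range: since $I_d=\sum_i f_i\calS_{d-d_i}$, one has $\dim I_d\le\sum_i\binom{n+d-d_i}{n}$, i.e. $\HF_{\calS/I}(d)\ge\coeff_d\big((1-\sum_i t^{d_i})/(1-t)^{n+1}\big)$, and comparing with the inclusion--exclusion expansion of $P$ shows this implies (\ref{ineq:wFC}) whenever the higher-order terms $\sum_{i<j}t^{d_i+d_j}-\sum_{i<j<k}t^{d_i+d_j+d_k}+\cdots$ contribute non-positively in degree $d$ — in particular once $d$ is large relative to the $d_i$. For the remaining small-degree range write $I'=(f_1,\dots,f_{s-1})$ and use the exact sequence
\[
\big((I':f_s)/I'\big)(-d_s)\longrightarrow(\calS/I')(-d_s)\xrightarrow{\,\cdot f_s\,}\calS/I'\longrightarrow\calS/I\longrightarrow 0,
\]
which gives $\HF_{\calS/I}(d)=\HF_{\calS/I'}(d)-\HF_{\calS/I'}(d-d_s)+\HF_{(I':f_s)/I'}(d-d_s)\ge\HF_{\calS/I'}(d)-\HF_{\calS/I'}(d-d_s)$. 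To turn this into $\HF_{\calS/I}(d)\ge\coeff_d(P)=\coeff_d(P')-\coeff_{d-d_s}(P')$, with $P'=\prod_{i<s}(1-t^{d_i})/(1-t)^{n+1}$, one needs both $\HF_{\calS/I'}(d)\ge\coeff_d(P')$ — the inductive hypothesis, together with a short check that the bracket operation is compatible with multiplying by $(1-t^{d_s})$ in the relevant range (the first negative coefficient of $P$ occurs no later than that of $P'$) — and an \emph{upper} bound $\HF_{\calS/I'}(d-d_s)\le\coeff_{d-d_s}(P')$, i.e. that the general forms $f_1,\dots,f_{s-1}$ span at least the expected dimension in each degree. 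The latter, together with wFC for $s-1$ forms, is exactly the strong conjecture (sFC) for those $s-1$ forms, which is known for $s-1\le n+2$ by a result attributed to Stanley (see \cite{Ia:Compressed}), for $n+1\le 3$ by Anick \cite{An:Froberg}, and in the lowest degree by Hochster--Laksov \cite{HL87:LinearSyzygies}; so this route yields (\ref{ineq:wFC}) unconditionally in those cases, in particular for every $s\le n+3$ and for at most three variables.

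\textbf{The obstacle.} The crux — and the reason the argument stops short of the full statement — is the upper bound $\HF_{\calS/I'}(j)\le\coeff_j(\lceil P'\rceil)$ for ideals of more than $n+2$ general forms in at least four variables: this is a maximal-rank (Weak-Lefschetz-type) property for the algebra cut out by general forms and is, up to reformulation, equivalent to Fr\"oberg's conjecture itself, hence widely open. Removing it would require genuinely new input — either a direct control of the non-Koszul syzygies of general forms in the low degrees where $\coeff_d(P)$ is still positive (a minimal-resolution-type statement), or a bootstrap turning the asymptotic estimate of \cite{Ne17} into honest coefficientwise bounds; degeneration-to-monomial and positive-characteristic methods are natural to try here but have not so far succeeded. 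In the present paper the full strength of (\ref{ineq:wFC}) is never used: it is only invoked for the degree sequences $(a_1,d-a_1,\dots,a_r,d-a_r)$ with $a_i\le\lfloor d/2\rfloor$ and $d\le 7$ or $d=9$, and for those one establishes instead the sharper numerical inequality (\ref{eq:propertyP}) directly, which suffices for Theorem~\ref{thm:main}.
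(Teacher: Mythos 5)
You were asked to prove a statement that the paper itself records as an open conjecture: the paper offers no proof of wFC (it explicitly notes that not much more is known about wFC than about sFC), so there is no proof of record to compare against, and your write-up rightly stops short of claiming one. What you do prove along the way coincides with the paper's own machinery: the inequality $\HF_{\calS/I}(d)\geq\HF_{\calS/I'}(d)-\HF_{\calS/I'}(d-d_s)$ from the multiplication-by-$f_s$ sequence is exactly Lemma~\ref{lemma:wFC}, and your reduction ``lower bound for $s$ forms follows from the lower bound \eqref{eq:propertyP} for $s-1$ forms plus the upper bound \eqref{eq:propertyQ} for $s-1$ forms in degree $d-d_s$'' is precisely how the paper establishes property $\mathrm{P}$ for the degree sequences it actually needs (Propositions~\ref{prop:deg4}--\ref{prop:deg9}), feeding in the known cases of sFC (complete intersections, Stanley's $s=n+2$ case, Anick in three variables, Hochster--Laksov) through Lemmas~\ref{lemma:P4lows}, \ref{lemma:sFC2Q2} and~\ref{lemma:sFC2Q3}. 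Your diagnosis of the obstruction is also accurate: the missing ingredient is the upper bound $\mathrm{Q}$ (maximal growth of an ideal of general forms) beyond $n+2$ generators in four or more variables, which is essentially the open core of Fr\"oberg's conjecture; this is exactly why the paper proves only the instances of \eqref{eq:propertyP} needed for Theorem~\ref{thm:main} rather than Conjecture~\ref{conj:wFroberg} itself.

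Two small corrections to the sketch. First, the claim that the crude bound $\dim I_d\leq\sum_i\binom{n+d-d_i}{n}$ disposes of the case ``$d$ large relative to the $d_i$'' is backwards: it settles the range $d$ smaller than the sum of the two smallest degrees, where no higher inclusion--exclusion terms appear, whereas for large $d$ those terms contribute \emph{positively} (asymptotically about $(s-1)\binom{d+n}{n}$), so the crude bound fails there -- although in that range $\coeff_d(\lceil P\rceil)$ is typically $0$ and \eqref{ineq:wFC} is vacuous for a different reason. Second, ``$\mathrm{Q}$ together with wFC is exactly sFC'' is only true in degrees where the bracket does not alter the coefficient; your parenthetical check that the first negative coefficient of $P$ occurs no later than that of $P'$ is correct and is what makes the deduction of $\mathrm{Q}$ from sFC legitimate in the relevant range, mirroring the coefficient checks in Lemmas~\ref{lemma:sFC2Q2} and~\ref{lemma:sFC2Q3}.
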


Not much more is known regarding wFC compared to sFC. In \cite{Fr85:HilbertSeriesGradedAlg}, Fr\"oberg proved that the inequality holds \textit{lexicographically}, i.e., it holds at the first coefficient where equality fails. It is trivial to notice that wFC holds for $d<2\min_i\{d_i\}$, since we have no Koszul syzygies in this range. Following an idea of Iarrobino \cite{Ia97:InverseSystem3_ThinAlgebras}, our approach essentially is to prove instances of wFC from instances of sFC and wFC for fewer generators.

\section{Proof of the main results}\label{sec:proofMain}

Fix integers $n\geq1$, $d\geq 2$ and $\ell_1,\ell_2,\ldots,\ell_{\left\lfloor d/2 \right\rfloor}\geq0$. Write $r=\ell_1+\ell_2+\ldots+\ell_{\left\lfloor d/2 \right\rfloor}$ and
\[
J_{\ell_1,\ldots,\ell_{\left\lfloor d/2 \right\rfloor}} := J\left(\sigma_{\ell_1}(X_{(1,d-1)}),\sigma_{\ell_2}(X_{(2,d-2)}),\ldots,\sigma_{\ell_{\lfloor d/2\rfloor}}(X_{({\lfloor d/2\rfloor},{\lceil d/2\rceil})})\right).
\]
Then, we want to show that 
\begin{equation}\label{eq:main}
\dim J_{\ell_1,\ldots,\ell_{\left\lfloor d/2 \right\rfloor}}  \leq \dim \sigma_r(X_{(1,d-1)}).
\end{equation}
Write $m(\ell) := n -\ell$ and take
\[
f_{n,d}(m,\ell_2,\ldots,\ell_{\left\lfloor d/2 \right\rfloor}) := \sum_{\beta\in\bbZ_{\geq0}^{2\times(\lfloor d/2\rfloor-1)}}(-1)^{|\beta|}\prod_{i=2}^{\lfloor d/2\rfloor}\binom{\ell_i}{\beta_{1,i}}\binom{\ell_i}{\beta_{2,i}}\cdot\binom{m+d-||\beta||}{m} - (n-m)(m+1),
\]
where we write $||\beta||:=\sum_{i=2}^{\lfloor d/2\rfloor}\left(\beta_{1,i}\cdot i+\beta_{2,i}\cdot(d-i)\right)$ and $|\beta|:=\sum_{i=2}^{\lfloor d/2\rfloor}\left(\beta_{1,i}+\beta_{2,i}\right)$ for all
\[
\beta=\begin{pmatrix}\beta_{1,2}&\beta_{1,3}&\cdots&\beta_{1,\lfloor d/2\rfloor}\\\beta_{2,2}&\beta_{2,3}&\cdots&\beta_{2,\lfloor d/2\rfloor}\end{pmatrix}\in\bbZ_{\geq0}^{2\times(\lfloor d/2\rfloor-1)}.
\]
It is direct to show that
\begin{align*}
\coeff_d \left(\frac{\prod_{i=1}^{\left\lfloor d/2 \right\rfloor} (1-t^i)^{\ell_i}(1-t^{d-i})^{\ell_i}}{(1-t)^{n+1}}\right) & = \coeff_d \left( \frac{(1-t^{d-1})^{\ell_1}\prod_{i=2}^{\left\lfloor d/2 \right\rfloor} (1-t^i)^{\ell_i}(1-t^{d-i})^{\ell_i}}{(1-t)^{m(\ell_1)+1}}\right) \\
& = \coeff_d \left( \frac{\prod_{i=2}^{\left\lfloor d/2 \right\rfloor} (1-t^i)^{\ell_i}(1-t^{d-i})^{\ell_i}}{(1-t)^{m(\ell_1)+1}}\right) -\ell_1(m(\ell_1)+1)\\
& = f_{n,d}(m(\ell_1),\ell_2,\ldots,\ell_{\left\lfloor d/2 \right\rfloor})
\end{align*}
As explained in Remark \ref{rmk:HL}, we have 
\[
\codim\sigma_r(X_{(1,d-1)}) = \coeff_d\left(\frac{(1-t^{d-1})^r}{(1-t)^{n-r}}\right).
\]
So in particular, we get
\begin{equation}\label{eq:codim_1}
\codim\sigma_r(X_{(1,d-1)}) = f_{n,d}(m(r),0,\ldots,0).
\end{equation}
Now, to prove Theorem~\ref{thm:main}, we use the following lemma.

\begin{lemma}\label{lemma:arithmeticLemma}
Let $d \leq 10$. In the same notation as above, if $r=\ell_1 + \ldots + \ell_{\lfloor d/2\rfloor} \leq \slrk_{n,d}^\circ - 1$, then
\[
f_{n,d}(m(\ell_1),\ell_2,\ldots,\ell_{\lfloor d/2\rfloor}) \geq f_{n,d}(m(r),0,\ldots,0).
\]
{If $\ell_2+\ldots+\ell_{\lfloor d/2\rfloor}>0$ and $(n,d,r)\neq (3,4,2)$, then the inequality holds strictly.}
\end{lemma}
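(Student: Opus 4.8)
The plan is to treat the inequality $f_{n,d}(m(\ell_1),\ell_2,\ldots,\ell_{\lfloor d/2\rfloor}) \geq f_{n,d}(m(r),0,\ldots,0)$ as a coefficient comparison between two explicit power series and to reduce it, step by step, to a one-parameter statement. First I would observe that, by the computation preceding the lemma,
\[
f_{n,d}(m(\ell_1),\ell_2,\ldots,\ell_{\lfloor d/2\rfloor}) = \coeff_d\!\left(\frac{\prod_{i=1}^{\lfloor d/2\rfloor}(1-t^i)^{\ell_i}(1-t^{d-i})^{\ell_i}}{(1-t)^{n+1}}\right),
\]
and that the target value $f_{n,d}(m(r),0,\ldots,0)$ is the analogous coefficient where all $r$ reducible factors are replaced by linear-times-$(d-1)$ factors, namely $\coeff_d\bigl((1-t^{d-1})^r/(1-t)^{n-r}\bigr)$. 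So the content of the lemma is: replacing a factor $(1-t^i)(1-t^{d-i})$ (for $2\le i\le d/2$) by $(1-t)(1-t^{d-1})$ in the numerator cannot decrease the degree-$d$ coefficient, as long as we stay below the general slice rank. The natural strategy is an exchange argument: show that for each $i$ with $2\le i\le\lfloor d/2\rfloor$,
\[
\coeff_d\!\left(\frac{(1-t^i)(1-t^{d-i})}{(1-t)^2}\cdot Q(t)\right)\ \ge\ \coeff_d\!\left(\frac{(1-t^{d-1})}{1}\cdot Q(t)\right)
\]
for the relevant auxiliary series $Q$, and then iterate, converting one reducible factor at a time into a linear one. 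The difference of the two bracketed expressions is $\bigl((1-t^i)(1-t^{d-i}) - (1-t)^2(1-t^{d-1})\bigr)Q(t)/(1-t)^2$; expanding the polynomial in front gives a short explicit expression whose low-degree coefficients (up to degree $d$) can be written out, since $d\le 10$ bounds the number of terms that matter.

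The key steps, in order, would be: (i) reduce to the case $\lfloor d/2\rfloor\le d\le 10$ and write out, for each of the finitely many values $d\in\{4,5,6,7,8,9,10\}$, the explicit finite list of exponents appearing among $\{i,d-i,d-1,1\}$ that contribute to $\coeff_d$; (ii) reformulate the inequality as a statement about $\coeff_d$ of a product of $(1-t)^{-1}$-shifted geometric-type series, using $1/(1-t)^k = \sum_j \binom{j+k-1}{k-1}t^j$, so that $f_{n,d}$ becomes the alternating sum over $\beta$ already displayed; (iii) carry out the exchange argument one factor at a time, each exchange changing $\ell_i\mapsto\ell_i-1$ and $\ell_1\mapsto\ell_1+1$ (equivalently $m(\ell_1)\mapsto m(\ell_1)-1$), and check that each individual exchange is coefficient-wise non-decreasing in degree $d$; (iv) track the constraint $r\le\slrk_{n,d}^\circ - 1$ carefully, since it is exactly what guarantees that the relevant binomials $\binom{m+d-\|\beta\|}{m}$ do not vanish or behave pathologically, and that the subtracted linear term $(n-m)(m+1)$ does not overwhelm the main sum; (v) finally, identify the cases where equality can occur — this should force $\ell_2+\ldots+\ell_{\lfloor d/2\rfloor}=0$ except for the sporadic $(n,d,r)=(3,4,2)$, which one checks directly by hand since there the ambient dimensions are tiny.

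I expect the main obstacle to be step (iii)–(iv): making the per-exchange inequality genuinely monotone rather than merely true in aggregate, and controlling the interplay between the binomial $\binom{m+d-\|\beta\|}{m}$ (which grows as $m$ grows) and the linear correction $(n-m)(m+1)$ (which is maximized near $m=n/2$). One cannot simply bound term-by-term because the alternating signs in the $\beta$-sum mean cancellation is essential; the honest approach is to regroup the $\beta$-sum so that the net effect of a single exchange is visibly a sum of genuinely non-negative quantities. Concretely, after the exchange the difference $f_{n,d}(m-1,\ldots)-f_{n,d}(m,\ldots)$ should telescope into something like $\coeff_d\bigl((\text{short polynomial})\cdot(1-t)^{-(m+1)}\cdot(\text{product of remaining factors})\bigr)$ minus an explicit integer, and the short polynomial — coming from $(1-t^i)(1-t^{d-i}) - (1-t)(1-t^{d-1})(1-t) = t + t^{d-1} - t^i - t^{d-i} + (\text{higher order, degree}>d)$ after dividing by $(1-t)$ — has exactly two positive and two negative low-degree terms, so the inequality becomes a comparison of four binomial coefficients of the form $\binom{m+a}{m}$ for $a\in\{d-1,0,d-i,i-1\}$ (roughly), which is a convexity/log-convexity statement about binomials that I would verify by the usual inequality $\binom{x+1}{m}\binom{y-1}{m}\le\binom{x}{m}\binom{y}{m}$ for $x\le y$. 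The bound $d\le 10$ is what keeps the bookkeeping finite: for each $d$ there are only a handful of possible $i$, hence only a handful of short polynomials to analyze, so the whole lemma reduces to a finite — if tedious — verification once the exchange framework is set up. The strictness claim and the $(3,4,2)$ exception then drop out of tracking when each binomial inequality above is an equality.
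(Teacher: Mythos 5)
Your proposal has the right top-level skeleton — an exchange argument that moves one unit of weight from $\ell_i$ to $\ell_1$ at a time and compares the degree-$d$ coefficients of the corresponding generating series — and this matches the strategy of Section~\ref{sec:proofkeylemma} (the paper cascades $\ell_j\to\ell_{j-1}\to\cdots\to\ell_2\to\ell_1$ via the statements $\calA^{(j)}_{n,d}$ and $\calB_{n,d}$, rather than jumping directly to $\ell_1$, but that is a minor variation). However, the key claim in your steps (iii)--(iv), that each exchange reduces to a comparison of four binomial coefficients $\binom{m+a}{m}$ and hence to a convexity fact about binomials, does not survive contact with the general case. The identity $(1-t^i)(1-t^{d-i})-(1-t)(1-t^{d-1})=t+t^{d-1}-t^i-t^{d-i}$ does reduce the exchange to $\coeff_{d-1}(Q)+\coeff_1(Q)-\coeff_i(Q)-\coeff_{d-i}(Q)\ge 0$, and when $Q=1/(1-t)^{m+1}$ these coefficients are binomials and the convexity argument works. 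But $Q$ generally has numerator $\prod_{i'\neq i}(1-t^{i'})^{\ell_{i'}}(1-t^{d-i'})^{\ell_{i'}}$, and those alternating-sign factors destroy monotonicity and convexity of the coefficient sequence of $Q$. You acknowledge this (``one cannot simply bound term-by-term'') and say the ``honest approach is to regroup the $\beta$-sum'' into non-negative pieces, but no such regrouping is exhibited, and the paper's proof indicates none is available: the per-exchange difference $g^{(j)}_{n,d}(m,\ell_2,\dots,\ell_j)$ is genuinely sign-indefinite for small $m$, even under the constraint $r\le\slrk^\circ_{n,d}-1$.

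The paper's actual proof handles this with a much cruder device and a computer check, and both are essential. First, Lemma~\ref{lemma:appendix2} bounds $g^{(j)}_{n,d}$ below by a univariate polynomial $\tilde g^{(j)}_{n,d}(m)$ obtained by replacing each $\ell_{i'}$ by $m$ (when its coefficient is negative) or by $0$ (when positive); this polynomial has positive leading coefficient, hence is eventually positive. Second, and this is where the hypothesis $r\le\slrk^\circ_{n,d}-1$ is used quantitatively rather than just to ``keep binomials from vanishing,'' Lemma~\ref{lemma:appendix1} shows $m\ge n-\slrk^\circ_{n,d}+1\approx\sqrt[d-1]{d!\,n}$, so $m$ is forced to be large once $n$ is large, and the asymptotic positivity of $\tilde g^{(j)}_{n,d}$ kicks in for $n>N$. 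The remaining finitely many cases $n\le N$ (Lemma~\ref{lemma:final2}) are verified by software, and the relevant $N$ are huge: $N=73{,}305{,}293$ for $d=9$ and $N=4{,}393{,}224{,}603$ for $d=10$. So the ``finite, if tedious, verification'' you invoke is a real computational task (further reduced, but still nontrivial) rather than a small by-hand list of binomial inequalities. In short: the exchange framework is correct, but the convexity lemma you would need to fill it is false in the presence of the other $(1-t^{i'})$ factors, the constraint on $r$ must enter through explicit asymptotics for $n-\slrk^\circ_{n,d}$, and a substantial computer-assisted check is unavoidable with this approach.
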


Before getting into the technicalities of the proof of this lemma in the next section, we show that Theorem~\ref{thm:main} follows from Lemma \ref{lemma:arithmeticLemma}. The key will be to prove that
\begin{equation}\label{ineq:dimJoins}
\codim J_{\ell_1,\ldots,\ell_{\left\lfloor d/2 \right\rfloor}} \geq f_{n,d}(m(\ell_1),\ell_2,\ldots,\ell_{\lfloor d/2\rfloor}).
\end{equation}
This inequality, combined with Lemma \ref{lemma:arithmeticLemma} and \eqref{eq:codim_1}, shows that
\begin{equation}\label{ineq:codimensionsJoins}
\codim J_{\ell_1,\ldots,\ell_{\left\lfloor d/2 \right\rfloor}} \geq \codim \sigma_r(X_{(1,d-1)})
\end{equation}
whenever $r=\ell_1 + \ldots + \ell_{\lfloor d/2\rfloor}\leq \slrk_{n,d}^\circ - 1$, i.e., the assertion of Conjecture \ref{conj:dimEquality_intro}. Note here that for $r\geq  \slrk_{n,d}^\circ$ the variety $\sigma_r(X_{(1,d-1)})$ equals the entire space and so Conjecture \ref{conj:dimEquality_intro} holds trivially.

\begin{remark}
In \cite[Lemma 7.3]{C+19:SecantReducible}, the authors prove Conjecture \ref{conj:dimEquality_intro} when $2r \leq n+1$. Under this assumption, \eqref{ineq:dimJoins} holds with equality (see Lemma \ref{lemma:P4lows}), while for larger numbers of generators, not even the inequality is known to be true in general. Under this numerical assumption, Lemma \ref{lemma:arithmeticLemma} also gets easier: indeed it can be deduced from the case $2r = n+1$ where the inequality is deduced from the immediate inequality
	\[
		\coeff_j \frac{(1-t^i)(1-t^{d-i})}{(1-t)^2} \geq \coeff_j \frac{(1-t)(1-t^{d-1})}{(1-t)^2}, \quad \text{ for } 1 \leq j \leq d/2. 
	\] 
Hence, we focus on proving \eqref{ineq:dimJoins}.\remarkend
\end{remark}

Let $f_1,\ldots,f_s$ be general forms in $n+1$ variables of degrees $d_1,\ldots,d_s$ and take $I = (f_1,\ldots,f_s)$. Then we write:
\begin{itemize}
\item $\wFC_{n,d}(d_1,\ldots,d_s)$ (resp. $\sFC_{n,d}(d_1,\ldots,d_s)$) if the inequality \eqref{ineq:wFC} (resp. equality~\eqref{eq:sFC}) holds. 
\item $\mathrm{P}_{n,d}(d_1,\ldots,d_s)$ to indicate that the inequality 
\begin{equation}\label{eq:propertyP}
\coeff_d\left(\HS_{\calS/I}(t)\right) \geq \coeff_d \left(\frac{\prod_{i=1}^s (1-t^{d_i})}{(1-t)^{n+1}}\right)
\end{equation}
holds.
\item $\mathrm{Q}_{n,d}(d_1,\ldots,d_s)$ to indicate that the inequality
\begin{equation}\label{eq:propertyQ}
\coeff_d\left(\HS_{\calS/I}(t)\right) \leq \coeff_d \left(\frac{\prod_{i=1}^s (1-t^{d_i})}{(1-t)^{n+1}}\right)
\end{equation}
holds.
\end{itemize}
Using this notation, we want to prove instances of the property P.

\begin{remark}
Note that $\mathrm{P}_{n,d}(d_1,\ldots,d_s)$ implies $\wFC_{n,d}(d_1,\ldots,d_s)$. Also, if 
\[
\coeff_d \left(\left\lceil \frac{\prod_{i=1}^s (1-t^{d_i})}{(1-t)^{n+1}}\right\rceil\right)=\coeff_d \left( \frac{\prod_{i=1}^s (1-t^{d_i})}{(1-t)^{n+1}}\right),
\]
then $\sFC_{n,d}(d_1,\ldots,d_s)$ implies $\mathrm{P}_{n,d}(d_1,\ldots,d_s)$ and $\mathrm{Q}_{n,d}(d_1,\ldots,d_s)$.\remarkend
\end{remark}

\begin{lemma}\label{lemma:wFC}
If $\mathrm{Q}_{n,d-d_s}(d_1,\ldots,d_{s-1})$ and $\mathrm{P}_{n,d}(d_1,\ldots,d_{s-1})$ hold, then $\mathrm{P}_{n,d}(d_1,\ldots,d_s)$ holds as well.
\end{lemma}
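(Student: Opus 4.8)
Set $J=(f_1,\ldots,f_{s-1})$ and $I=J+(f_s)=(f_1,\ldots,f_s)$, and abbreviate
\[
H_{s-1}(t):=\frac{\prod_{i=1}^{s-1}(1-t^{d_i})}{(1-t)^{n+1}},\qquad H_{s}(t):=\frac{\prod_{i=1}^{s}(1-t^{d_i})}{(1-t)^{n+1}}=(1-t^{d_s})H_{s-1}(t),
\]
so that $\coeff_d(H_s)=\coeff_d(H_{s-1})-\coeff_{d-d_s}(H_{s-1})$, where we read $\coeff_e=0$ for $e<0$. The plan is to bound $\HF_{\calS/I}(d)$ from below by $\HF_{\calS/J}(d)-\HF_{\calS/J}(d-d_s)$ using multiplication by $f_s$, and then to feed $\mathrm{P}_{n,d}(d_1,\ldots,d_{s-1})$ into the first term and $\mathrm{Q}_{n,d-d_s}(d_1,\ldots,d_{s-1})$ into the (subtracted) second term; note it is precisely because $\mathrm{Q}$ is an \emph{upper} bound that it controls this term in the right direction.

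First I would establish the elementary inequality. Since $I_d=J_d+f_s\cdot\calS_{d-d_s}$, the space $(I/J)_d$ is the image of the multiplication map $\cdot f_s\colon(\calS/J)_{d-d_s}\to(\calS/J)_d$, hence $\dim(I/J)_d\le\HF_{\calS/J}(d-d_s)$ and therefore
\[
\HF_{\calS/I}(d)=\HF_{\calS/J}(d)-\dim(I/J)_d\ \ge\ \HF_{\calS/J}(d)-\HF_{\calS/J}(d-d_s).
\]
This step uses nothing about $f_s$. Next I would choose $f_1,\ldots,f_{s-1}$ general so that $\HF_{\calS/J}(d)$ and $\HF_{\calS/J}(d-d_s)$ simultaneously attain their generic values; the hypotheses then give $\HF_{\calS/J}(d)\ge\coeff_d(H_{s-1})$ and $\HF_{\calS/J}(d-d_s)\le\coeff_{d-d_s}(H_{s-1})$, so that for any $f_s$,
\[
\HF_{\calS/I}(d)\ \ge\ \coeff_d(H_{s-1})-\coeff_{d-d_s}(H_{s-1})\ =\ \coeff_d(H_s).
\]

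Finally I would tidy up the one genuinely fussy point, which is also the only place I expect to have to be careful: $\mathrm{P}_{n,d}(d_1,\ldots,d_s)$ is a statement about a general $s$-tuple $(f_1,\ldots,f_s)$, whereas the bound above was obtained for a general choice of $(f_1,\ldots,f_{s-1})$ and an \emph{arbitrary} $f_s$. Because $\dim I_d$ is the rank of the multiplication map $\bigoplus_{i=1}^s\calS_{d-d_i}\to\calS_d$, the value $\HF_{\calS/I}(d)$ is upper semicontinuous in $(f_1,\ldots,f_s)$ and attains its generic (minimal) value on a dense open subset of $\prod_{i=1}^s\calS_{d_i}$; this subset meets the dense open subset on which $f_1,\ldots,f_{s-1}$ are general in the sense used above, and at any common point the last display forces the generic value of $\HF_{\calS/I}(d)$ to be at least $\coeff_d(H_s)$, which is exactly $\mathrm{P}_{n,d}(d_1,\ldots,d_s)$. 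Everything else reduces to the one-line estimate $\HF_{\calS/I}(d)\ge\HF_{\calS/J}(d)-\HF_{\calS/J}(d-d_s)$ combined with the two hypotheses, so I do not anticipate any further obstacle.
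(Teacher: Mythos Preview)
Your proof is correct and follows essentially the same approach as the paper: both use that multiplication by $f_s$ gives a map $(\calS/J)_{d-d_s}\to(\calS/J)_d$ with cokernel $(\calS/I)_d$, yielding $\HF_{\calS/I}(d)\ge\HF_{\calS/J}(d)-\HF_{\calS/J}(d-d_s)$, and then apply $\mathrm{P}$ to the first term and $\mathrm{Q}$ to the second. The paper phrases this as a short exact sequence and omits the semicontinuity discussion you spell out, but the argument is the same.
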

\begin{proof}
Take $I' = (f_1,\ldots,f_{s-1})$. Then we have the short exact sequence
\[
(\calS/I')_{d-d_s} \xrightarrow{-\cdot g_s} (\calS/I')_d \rightarrow (\calS/I)_d \rightarrow 0.
\]
It follows that
\begin{align*}
\coeff_d\left(\HS_{\calS/I}(t)\right) & \geq \coeff_d\left(\HS_{\calS/I'}(t)\right) -  \coeff_{d-d_s}\left(\HS_{\calS/I'}(t)\right)  \\
& \geq \coeff_d \left( \frac{\prod_{i=1}^{s-1} (1-t^{d_i})}{(1-t)^{n+1}}\right) -  \coeff_{d-d_s} \left( \frac{\prod_{i=1}^{s-1} (1-t^{d_i})}{(1-t)^{n+1}}\right) \\
& = \coeff_d \left( \frac{\prod_{i=1}^{s} (1-t^{d_i})}{(1-t)^{n+1}}\right).
\end{align*}
\end{proof}

\begin{samepage}
\begin{lemma}\label{lemma:P4lows}
If $s\leq n+1$, then $\mathrm{Q}_{n,d}(d_1,\ldots,d_s)$ and $\mathrm{P}_{n,d}(d_1,\ldots,d_s)$ hold.
\end{lemma}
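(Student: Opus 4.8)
The plan is to use the hypothesis $s\le n+1$ to reduce to the complete-intersection case: general forms $f_1,\dots,f_s$ of the given degrees then form a regular sequence, so $\HS_{\calS/I}(t)$ is computed by the Koszul complex, and the resulting rational function already has nonnegative coefficients. Consequently the operation $\lceil-\rceil$ does nothing, the right-hand sides of \eqref{eq:propertyP} and \eqref{eq:propertyQ} coincide with $\coeff_d\big(\HS_{\calS/I}(t)\big)$, and both $\mathrm{P}_{n,d}(d_1,\dots,d_s)$ and $\mathrm{Q}_{n,d}(d_1,\dots,d_s)$ hold (in fact with equality).

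First I would recall that the set of tuples $(g_1,\dots,g_s)\in\calS_{d_1}\times\cdots\times\calS_{d_s}$ forming a regular sequence is Zariski-open, and that it is nonempty: since $s-1\le n$, the monomials $x_0^{d_1},\dots,x_{s-1}^{d_s}$ lie among the variables $x_0,\dots,x_n$, and a sequence of pure powers of distinct variables is regular in $\calS$ (one sees directly that $\calS/(x_0^{d_1},\dots,x_{s-1}^{d_s})$ has Krull dimension $n+1-s$). Hence the general forms $f_1,\dots,f_s$ form a regular sequence, so the Koszul complex on $f_1,\dots,f_s$ is a graded free resolution of $\calS/I$; taking the alternating sum of Hilbert series of its terms yields
\[
\HS_{\calS/I}(t)=\frac{\prod_{i=1}^s(1-t^{d_i})}{(1-t)^{n+1}}.
\]

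Next I would check that this power series has only nonnegative coefficients, by writing it as $\Big(\prod_{i=1}^s(1+t+\cdots+t^{d_i-1})\Big)\cdot(1-t)^{-(n+1-s)}$, a product of power series with nonnegative coefficients (using $n+1-s\ge 0$). Therefore
\[
\left\lceil\frac{\prod_{i=1}^s(1-t^{d_i})}{(1-t)^{n+1}}\right\rceil=\frac{\prod_{i=1}^s(1-t^{d_i})}{(1-t)^{n+1}},
\]
and in particular the $d$-th coefficients of the two sides agree. Combining this with the displayed formula for $\HS_{\calS/I}(t)$ gives $\coeff_d\big(\HS_{\calS/I}(t)\big)=\coeff_d\Big(\frac{\prod_{i=1}^s(1-t^{d_i})}{(1-t)^{n+1}}\Big)$, which is exactly both inequality \eqref{eq:propertyP} and inequality \eqref{eq:propertyQ}.

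I do not expect a genuine obstacle here; the only point needing a sentence of justification is the nonemptiness of the regular-sequence locus, dispatched by the explicit pure-power example, while the rest is the standard complete-intersection Hilbert-series computation already alluded to after Conjecture~\ref{conj:Froberg}. It is worth noting that the argument in fact establishes $\HF_{\calS/I}=\coeff_\bullet\big(\frac{\prod_i(1-t^{d_i})}{(1-t)^{n+1}}\big)$ in all degrees, of which the statement is the degree-$d$ specialization.
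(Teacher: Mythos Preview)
Your proof is correct and follows essentially the same approach as the paper: the paper's one-line argument is that sFC holds for $s\le n+1$ without need for the brackets, and you have simply unpacked this by exhibiting the regular-sequence locus as nonempty, computing the Hilbert series via Koszul, and verifying nonnegativity of the coefficients so that $\lceil-\rceil$ is the identity. Your version is more detailed but the underlying idea is identical.
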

\begin{proof}
As sFC holds for $s\leq n+1$ without need for the brackets around the power series, both the properties $Q$ and $P$ hold.
\end{proof}
\end{samepage}

In our case, many of the $d_i$ are equal. So we use the following notation.

\begin{notation}
We write 
\[
(a_1^{(m_1)},\ldots,a_s^{(m_s)}) := (\underbrace{a_1,\ldots,a_1}_{m_1},\ldots,\underbrace{a_s,\ldots,a_s}_{m_s})
\]
for all $a_1,\ldots,a_s\in\bbN$ and $m_1,\ldots,m_s\in\bbZ_{\geq0}$.
\end{notation}
Let $d\geq 1$ and $m_1,\ldots,m_d\in\bbZ_{\geq0}$ be integers and take $I=I_1+\ldots+I_d$ where $I_i\subseteq\calS$ is an ideal generated by $m_i$ general forms of degree $i$. Our goal is to prove that $\mathrm{P}_{n,d}(1^{(m_1)},\ldots,d^{(m_d)})$ holds in the cases we need. Note that we can always assume to have no linear generators since, by genericity, they can always be removed in exchange for a decrease in the number of variables. More precisely, we have $\calS/I\cong (\calS/I_1)/(I_2+\ldots+I_s)$ and 
\[
\frac{\prod_{i=1}^d (1-t^i)^{m_i}}{(1-t)^{n+1}}=\frac{\prod_{i=2}^d (1-t^i)^{m_i}}{(1-t)^{n-m_1+1}}.
\]
It follows that $\mathrm{P}_{n,d}(1^{(m_1)},\ldots,d^{(m_d)})$ and $\mathrm{P}_{n-m_1,d}(2^{(m_2)},\ldots,d^{(m_d)})$ are equivalent. For the property $\mathrm{Q}$, we have the same equivalence.

\begin{lemma}\label{lemma:sFC2Q2}
Suppose that $m_2 \leq \binom{n+2}{2}$. Then $\mathrm{Q}_{n,2}(2^{(m_2)},\ldots,d^{(m_d)})$ holds.
\end{lemma}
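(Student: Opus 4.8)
The plan is to reduce $\mathrm{Q}_{n,2}(2^{(m_2)},\ldots,d^{(m_d)})$ to the case involving only the quadric generators, since generators of degree $\geq 3$ cannot affect the degree-$2$ part of the ideal. Concretely, let $I = I_2 + I_3 + \ldots + I_d$ where $I_i$ is generated by $m_i$ general forms of degree $i$, and let $J = I_2$. Since $f_1, \ldots, f_s$ are general forms, and in degree $2$ the only generators contributing are the $m_2$ quadrics, we have $(\calS/I)_2 = (\calS/J)_2$, hence $\coeff_2(\HS_{\calS/I}(t)) = \coeff_2(\HS_{\calS/J}(t))$. On the power series side, the bracket-free expressions satisfy
\[
\coeff_2\left(\frac{\prod_{i=2}^d(1-t^i)^{m_i}}{(1-t)^{n+1}}\right) = \coeff_2\left(\frac{(1-t^2)^{m_2}}{(1-t)^{n+1}}\right),
\]
since every factor $(1-t^i)^{m_i}$ with $i \geq 3$ contributes $1$ modulo $t^3$. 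So it suffices to prove $\mathrm{Q}_{n,2}(2^{(m_2)})$, i.e., the degree-$2$ inequality for an ideal generated by $m_2$ general quadrics.

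For that reduced statement, I would simply compute both sides explicitly. The space of quadrics has dimension $\binom{n+2}{2}$, and $m_2$ general quadrics are linearly independent precisely when $m_2 \leq \binom{n+2}{2}$, which is the hypothesis. Hence $\coeff_2(\HS_{\calS/J}(t)) = \binom{n+2}{2} - m_2$ when $m_2 \leq \binom{n+2}{2}$. On the other hand,
\[
\frac{(1-t^2)^{m_2}}{(1-t)^{n+1}} = (1 - m_2 t^2 + \ldots)\left(1 + (n+1)t + \binom{n+2}{2}t^2 + \ldots\right),
\]
so $\coeff_2 = \binom{n+2}{2} - m_2$ as well. Thus the two sides are in fact equal in degree $2$, which certainly gives the $\leq$ inequality required by $\mathrm{Q}$. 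One should note that the bracketed Fröberg series could differ from the bracket-free one only if the bracket-free coefficient were negative, i.e., if $\binom{n+2}{2} - m_2 < 0$, which is excluded by hypothesis; so there is no subtlety there, but property $\mathrm{Q}$ is stated in terms of the bracket-free series anyway, so this is immaterial.

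The main (and only real) obstacle is the bookkeeping in the reduction step: making sure that genericity of $f_1,\ldots,f_s$ legitimately lets one ignore the higher-degree generators when reading off the degree-$2$ coefficient, and that the power series identity holds. Both are routine — the higher-degree generators live in $\calS_{\geq 3}$ and so generate no new elements of $\calS/I$ in degree $2$, and the power series truncation modulo $t^3$ kills all $(1-t^i)^{m_i}$ with $i\geq 3$. Once these are in place the result is immediate from the linear independence of general quadrics. I expect the actual proof in the paper to be a one- or two-line argument along exactly these lines.
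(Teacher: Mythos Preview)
Your proof is correct and follows essentially the same approach as the paper: compute both sides of the inequality directly in degree $2$ and observe that they are actually equal, using the hypothesis $m_2\leq\binom{n+2}{2}$ to ensure the general quadrics are linearly independent. The paper compresses this into a single displayed line, leaving the reduction to only the quadric generators implicit, while you spell it out; but the content is identical.
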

\begin{proof}
We have
\[
\coeff_2\left(\HS_{\calS/I}(t)\right)=\max\left(\binom{n+2}{2}-m_2,0\right)=\binom{n+2}{2}-m_2=\coeff_2 \left( \frac{\prod_{i=2}^d (1-t^i)^{m_i}}{(1-t)^{n+1}}\right)
\]
since $m_2\leq\binom{n+2}{2}$.
\end{proof}

\begin{lemma}\label{lemma:sFC2Q3}
Suppose that $m_2\leq\binom{n+2}{2}$ and $m_2(n+1) + m_3\leq \binom{n+3}{3}$. Then $\mathrm{Q}_{n,3}(2^{(m_2)},\ldots,d^{(m_d)})$ holds.
\end{lemma}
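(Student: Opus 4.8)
The plan is to mimic the proof of Lemma~\ref{lemma:sFC2Q2} but working one degree higher, again reducing to a complete-intersection computation. First I would remove the linear generators as explained right before Lemma~\ref{lemma:sFC2Q2}, so that without loss of generality we may assume $m_1=0$; equivalently we replace $n+1$ by $n+1$ and just bookkeep the generators of degree $\geq 2$. Then I would observe that the degree-$3$ part of $\calS/I$ only sees generators of degrees $2$ and $3$, so $\coeff_3(\HS_{\calS/I}(t))$ depends only on $m_2$ and $m_3$: concretely, writing $I''=(g_1,\ldots,g_{m_2},h_1,\ldots,h_{m_3})$ with the $g_i$ general quadrics and the $h_j$ general cubics, we have $\coeff_3(\HS_{\calS/I}(t))=\coeff_3(\HS_{\calS/I''}(t))=\dim(\calS/I'')_3$.

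The key step is to compute this dimension exactly under the stated hypotheses. The degree-$3$ piece of $I''$ is spanned by the $g_i\cdot x_k$ (there are $m_2(n+1)$ of these, living in $\calS_3$) together with the $h_j$ (there are $m_3$ of these). Since $m_2\leq\binom{n+2}{2}$, the $m_2$ general quadrics are linearly independent, and since $m_2(n+1)+m_3\leq\binom{n+3}{3}=\dim\calS_3$, I claim the whole collection of $m_2(n+1)+m_3$ forms is linearly independent in $\calS_3$. For the products $g_i x_k$: a general ideal generated by $m_2$ quadrics has no linear syzygies by the Hochster–Laksov theorem \cite{HL87:LinearSyzygies} (this is exactly the $d=\min_i\{d_i\}+1$ case of sFC cited in the excerpt), so $(I_2)_3$ has dimension exactly $m_2(n+1)$; adding $m_3$ further general cubics keeps everything independent as long as the total does not exceed $\dim\calS_3$, by genericity. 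Hence $\coeff_3(\HS_{\calS/I''}(t))=\binom{n+3}{3}-m_2(n+1)-m_3$. On the other side, expanding $\prod_{i=2}^d(1-t^i)^{m_i}=1-m_2t^2-m_3t^3+O(t^4)$, we get
\[
\coeff_3\left(\frac{\prod_{i=2}^d(1-t^i)^{m_i}}{(1-t)^{n+1}}\right)=\binom{n+3}{3}-m_2\binom{n+1}{1}-m_3=\binom{n+3}{3}-m_2(n+1)-m_3,
\]
so the two coefficients agree and in particular $\mathrm{Q}_{n,3}$ (the $\leq$ direction) holds.

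The main obstacle is justifying the linear independence of $\{g_ix_k\}\cup\{h_j\}$ cleanly — i.e. that for general choices there are no unexpected linear relations. The cleanest route is to invoke Hochster–Laksov for the quadric part (no linear syzygies, giving $\dim(I_2)_3=m_2(n+1)$ precisely when $m_2(n+1)\le\binom{n+3}{3}$, which follows from the hypothesis since $m_3\ge0$) and then note that imposing $m_3$ additional general cubic conditions on the quotient space $\calS_3/(I_2)_3$ of dimension $\binom{n+3}{3}-m_2(n+1)$ drops the dimension by exactly $\min(m_3,\binom{n+3}{3}-m_2(n+1))=m_3$, using $m_2(n+1)+m_3\le\binom{n+3}{3}$. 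One should double-check the edge hypothesis $m_2\le\binom{n+2}{2}$ is really what guarantees the quadrics themselves are independent (needed so that "$m_2$ general quadrics" is not a degenerate count), which is immediate. With these pieces in place the computation matches the power-series coefficient and the lemma follows.
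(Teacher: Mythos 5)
Your proof is correct and follows essentially the same approach as the paper's. The paper phrases it slightly differently: it observes that the hypotheses force the coefficients of $\prod_{i=2}^d(1-t^i)^{m_i}/(1-t)^{n+1}$ to be nonnegative in degrees $1,2,3$, so the ceiling operation is a no-op up to degree $3$, and then cites that $\sFC_{n,3}$ holds as a consequence of Hochster--Laksov (sFC in degree $\min_i\{d_i\}+1$); you instead unpack that Hochster--Laksov step into an explicit count, showing $\dim(I)_3 = m_2(n+1) + m_3$ via no linear syzygies on the quadrics plus genericity of the added cubics. Both routes rely on the same key fact and use the hypotheses in the same places.
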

\begin{proof}
We have
\[
\coeff_1 \left( \frac{\prod_{i=2}^d (1-t^i)^{m_i}}{(1-t)^{n+1}}\right)=n+1\geq{ 0},  \quad
\coeff_2 \left( \frac{\prod_{i=2}^d (1-t^i)^{m_i}}{(1-t)^{n+1}}\right)=\binom{n+2}{2}-m_2\geq{0}
\]
and
\[
\coeff_3 \left( \frac{\prod_{2=1}^d (1-t^i)^{m_i}}{(1-t)^{n+1}}\right)=\binom{n+3}{3}-m_2(n+1)-m_3\geq0.
\]
Hence 
\[
\coeff_3 \left(\left\lceil \frac{\prod_{i=2}^d (1-t^i)^{m_i}}{(1-t)^{n+1}}\right\rceil\right)=\coeff_3 \left( \frac{\prod_{i=2}^d (1-t^i)^{m_i}}{(1-t)^{n+1}}\right).
\]
Since it is a consequence of \cite[Theorem 1]{HL87:LinearSyzygies} that $\sFC_{n,3}(2^{(m_2)},\ldots,d^{(m_d)})$ holds, the lemma follows.
\end{proof}

\begin{proposition}\label{prop:deg4}
Suppose that $m_2-1\leq\binom{n+2}{2}$. Then $\mathrm{P}_{n,4}(2^{(m_2)},3^{(m_3)},4^{(m_4)})$ holds.
\end{proposition}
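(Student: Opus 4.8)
The plan is to prove $\mathrm{P}_{n,4}(2^{(m_2)},3^{(m_3)},4^{(m_4)})$ by applying Lemma~\ref{lemma:wFC} twice, peeling off first the generators of degree $4$ and then those of degree $3$, and then invoking the known cases of sFC/wFC together with the auxiliary $\mathrm{Q}$-lemmas already established. Concretely, setting $d=4$ and $d_s=4$, Lemma~\ref{lemma:wFC} reduces $\mathrm{P}_{n,4}(2^{(m_2)},3^{(m_3)},4^{(m_4)})$ to proving $\mathrm{Q}_{n,0}(2^{(m_2)},3^{(m_3)})$ and $\mathrm{P}_{n,4}(2^{(m_2)},3^{(m_3)})$; the former is trivial since in degree $0$ the Hilbert function is $1$ on both sides (the ideal has no generators in degree $\le 0$), so the work is to establish $\mathrm{P}_{n,4}(2^{(m_2)},3^{(m_3)})$. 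Applying Lemma~\ref{lemma:wFC} again, this time with $d_s=3$, reduces $\mathrm{P}_{n,4}(2^{(m_2)},3^{(m_3)})$ to $\mathrm{Q}_{n,1}(2^{(m_2)})$ — again trivial, as no generators live in degree $\le 1$ — and $\mathrm{P}_{n,4}(2^{(m_2)})$.

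It remains to prove $\mathrm{P}_{n,4}(2^{(m_2)})$, i.e.\ that for $m_2$ general quadrics $q_1,\dots,q_{m_2}$ with $I=(q_1,\dots,q_{m_2})$ one has $\coeff_4(\HS_{\calS/I}(t)) \ge \coeff_4\!\big((1-t^2)^{m_2}/(1-t)^{n+1}\big)$. If $m_2 \le n+1$ the general quadrics form a regular sequence and Lemma~\ref{lemma:P4lows} gives the (stronger) equality, so one may assume $m_2 \ge n+2$. The natural route is again Lemma~\ref{lemma:wFC} with $d_s=2$: it suffices to have $\mathrm{Q}_{n,2}(2^{(m_2-1)})$ and $\mathrm{P}_{n,4}(2^{(m_2-1)})$. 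The first holds by Lemma~\ref{lemma:sFC2Q2} provided $m_2-1\le\binom{n+2}{2}$ — which is exactly the hypothesis of the proposition — and the second is the same statement with one fewer quadric, so one proceeds by downward induction on $m_2$ until reaching the regular-sequence base case $m_2=n+1$. One subtlety: the hypothesis $m_2-1\le\binom{n+2}{2}$ guarantees Lemma~\ref{lemma:sFC2Q2} applies at every stage of the induction (since decreasing $m_2$ only makes the inequality easier), so the chain of reductions closes up cleanly.

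Alternatively, and perhaps more transparently, one can bypass the final induction by a direct argument: write $\calS/I \cong (\calS/I')/(q_{m_2})$ with $I'=(q_1,\dots,q_{m_2-1})$, use the exact sequence $(\calS/I')_2 \xrightarrow{\cdot q_{m_2}} (\calS/I')_4 \to (\calS/I)_4 \to 0$ to get $\coeff_4(\HS_{\calS/I}) \ge \coeff_4(\HS_{\calS/I'}) - \coeff_2(\HS_{\calS/I'})$, then iterate down to $n+1$ quadrics and compare with the telescoping bracketed series; this is essentially what the two applications of Lemma~\ref{lemma:wFC} encode. The main obstacle I anticipate is purely bookkeeping rather than conceptual: verifying that at each application of Lemma~\ref{lemma:wFC} the relevant $\mathrm{Q}$-hypothesis genuinely holds (the degree-$0$ and degree-$1$ cases are vacuous, but the degree-$2$ case needs $m_2-1\le\binom{n+2}{2}$), and checking that the degenerate cases where some $m_i$ forces fewer variables (e.g.\ when $m_2$ quadrics already cut the ring down) do not break the inductive structure. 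No genuinely hard commutative algebra is needed — everything rests on Lemmas~\ref{lemma:wFC}, \ref{lemma:P4lows}, \ref{lemma:sFC2Q2} and the Hochster–Laksov result already cited — but one must be careful that the hypothesis $m_2-1\le\binom{n+2}{2}$ is used precisely where Lemma~\ref{lemma:sFC2Q2} is invoked.
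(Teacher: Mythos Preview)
Your proof is correct and the core induction on $m_2$ (via Lemma~\ref{lemma:wFC} and Lemma~\ref{lemma:sFC2Q2}, using the hypothesis $m_2-1\le\binom{n+2}{2}$) is exactly what the paper does. The one difference is in the first reduction step: you peel off the degree-$4$ and degree-$3$ generators by repeated applications of Lemma~\ref{lemma:wFC}, relying on the trivial facts $\mathrm{Q}_{n,0}$ and $\mathrm{Q}_{n,1}$; the paper instead removes them in one stroke by writing $I=I'+J$ with $I'$ the ideal of quadrics and $J$ the ideal of cubics and quartics, bounding $\dim I_4\le\dim I'_4+\dim J_4$, and computing $\dim J_4=m_3(n+1)+m_4$ via Hochster--Laksov. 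Your route is arguably more uniform (and does not need Hochster--Laksov here, since only the inequality $\dim J_4\le m_3(n+1)+m_4$ is required and that is trivial); the paper's route is shorter to write. Either way the reduction lands on $\mathrm{P}_{n,4}(2^{(m_2)})$, and from there the arguments coincide. One small imprecision: when you say ``applying Lemma~\ref{lemma:wFC} twice'' you of course mean $m_4$ times and then $m_3$ times, but since the needed $\mathrm{Q}$-conditions are in degrees $0$ and $1$ they hold at every intermediate step regardless.
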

\begin{proof}
Write $I = (f_1,\ldots,f_{m_2},g_1,\ldots,g_{m_3},h_1,\ldots,h_{m_4})$, where $\deg(f_i) = 2, \deg(g_i) = 3, \deg(h_i) = 4$. Then $I= I' + J$ where $I' = (f_1,\ldots,f_{m_2})$ and $J=(g_1,\ldots,g_{m_3},h_1,\ldots,h_{m_4})$. By \cite[Theorem 1]{HL87:LinearSyzygies}, $\dim J_4 = m_3(n+1)+m_4$. Therefore, we deduce that
\[
\dim I_4 \leq \dim I'_4 + \dim J_4 = \dim I'_4 + m_3(n+1)+m_4,
\]
i.e.,
\[
\coeff_4\left(\HS_{\calS/I}(t)\right)\geq \coeff_4\left(\HS_{\calS/I_2}(t)\right)-m_3(n+1)-m_4.
\]
So since
\[
\coeff_4 \left(\frac{\prod_{i=2}^4 (1-t^i)^{m_i}}{(1-t)^{n+1}}\right)= \coeff_4 \left(\frac{(1-t^2)^{m_2}}{(1-t)^{n+1}}\right)-m_3(n+1)-m_4,
\]
it suffices to prove that $\mathrm{P}_{n,4}(2^{(m_2)})$ holds. We do this using induction on $m_2$. Note that $\mathrm{P}_{n,4}(2^{(0)})$ holds and that $\mathrm{P}_{n,4}(2^{(m_2)})$ follows from $\mathrm{P}_{n,4}(2^{(m_2-1)})$ and $\mathrm{Q}_{n,2}(2^{(m_2-1)})$ for $m_2>0$ by Lemma~\ref{lemma:wFC}. Since $m_2-1\leq\binom{n+2}{2}$, by Lemma~\ref{lemma:sFC2Q2}, the second condition is satisfied in every step. So the proposition follows by induction.
\end{proof}

\begin{proposition}\label{prop:deg5}
Suppose that $(m_2-1)(n+1)\leq\binom{n+3}{3}$. Then $\mathrm{P}_{n,5}(2^{(m_2)},3^{(m_3)},4^{(m_4)},5^{(m_5)})$ holds. 
\end{proposition}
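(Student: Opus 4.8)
The plan is to imitate the proof of Proposition~\ref{prop:deg4}, only with one more layer: peel the generators of degrees $5$, $4$ and $3$ off one at a time using Lemma~\ref{lemma:wFC}, reduce to the pure-quadric statement $\mathrm{P}_{n,5}(2^{(m_2)})$, and establish the latter by induction on $m_2$. Before doing anything I would record that the hypothesis controls $m_2$ by itself: dividing $(m_2-1)(n+1)\le\binom{n+3}{3}$ by $n+1$ gives $m_2-1\le\tfrac{(n+2)(n+3)}{6}$, and the inequality $1+\tfrac{(n+2)(n+3)}{6}\le\tfrac{(n+1)(n+2)}{2}=\binom{n+2}{2}$ reduces to $(n-1)(n+3)\ge 0$, which holds for $n\ge1$; hence also $m_2\le\binom{n+2}{2}$. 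These two numerical facts are all that the proof uses beyond the hypothesis.

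Stripping the quintics: $\mathrm{P}_{n,5}(2^{(m_2)},3^{(m_3)},4^{(m_4)},5^{(m_5)})$ follows from $\mathrm{P}_{n,5}(2^{(m_2)},3^{(m_3)},4^{(m_4)})$ by applying Lemma~\ref{lemma:wFC} $m_5$ times, because each step only requires $\mathrm{Q}_{n,0}$ of the smaller system, which is trivially true ($\coeff_0$ of the Hilbert series of $\calS/I$ and $\coeff_0$ of the prescribed series are both $1$). Stripping the quartics reduces this to $\mathrm{P}_{n,5}(2^{(m_2)},3^{(m_3)})$, each step requiring only $\mathrm{Q}_{n,1}$, again trivially true as there are no linear generators. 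Stripping the cubics reduces to $\mathrm{P}_{n,5}(2^{(m_2)})$ provided $\mathrm{Q}_{n,2}(2^{(m_2)},3^{(j)})$ holds for $0\le j<m_3$, and that is precisely Lemma~\ref{lemma:sFC2Q2} under the condition $m_2\le\binom{n+2}{2}$ established above.

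For $\mathrm{P}_{n,5}(2^{(m_2)})$ I would induct on $m_2$: the base case $\mathrm{P}_{n,5}(2^{(0)})$ is immediate, since $\calS$ itself realises the prescribed series with equality; and the inductive step reduces $\mathrm{P}_{n,5}(2^{(m_2)})$ to $\mathrm{P}_{n,5}(2^{(m_2-1)})$ together with $\mathrm{Q}_{n,3}(2^{(m_2-1)})$ by Lemma~\ref{lemma:wFC}, the latter being Lemma~\ref{lemma:sFC2Q3}, whose hypotheses $m_2-1\le\binom{n+2}{2}$ and $(m_2-1)(n+1)\le\binom{n+3}{3}$ are exactly what we have. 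As an alternative to the peeling above (parallel to how Proposition~\ref{prop:deg4} is written), one can split $I$ directly: the degree-$5$ part of the ideal generated by the quartic and quintic generators has dimension at most $m_4(n+1)+m_5$, matching the bookkeeping of the prescribed series on the nose; and the image in $\calS_5/(\text{quadric ideal})$ of $\calS_2$ times each general cubic generator factors through $(\calS/(\text{quadric ideal}))_2$ and so has dimension at most $\binom{n+2}{2}-m_2$, matching exactly again. Either route lands on $\mathrm{P}_{n,5}(2^{(m_2)})$, and then one still inducts.

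The one place where genuinely more is needed than in degree $4$ is the cubic layer: the naive spanning bound that the degree-$5$ part of the ideal generated by the cubics has dimension at most $m_3\binom{n+2}{2}$ overshoots the prescribed-series bookkeeping by $m_2m_3$, so one cannot simply lump the cubics together with the higher-degree generators as in Proposition~\ref{prop:deg4}; one must instead either work modulo the quadric ideal or route through $\mathrm{Q}_{n,2}$, and this is exactly where $m_2\le\binom{n+2}{2}$ is used. The rest is bookkeeping: checking that each $\mathrm{Q}$-statement invoked at a peeling step is covered by Lemma~\ref{lemma:sFC2Q2} or Lemma~\ref{lemma:sFC2Q3} under the single hypothesis $(m_2-1)(n+1)\le\binom{n+3}{3}$, which I expect to be the only point requiring care.
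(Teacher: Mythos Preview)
Your argument is correct and follows essentially the same route as the paper: reduce to $\mathrm{P}_{n,5}(2^{(m_2)},3^{(m_3)})$, then peel off the cubics by Lemma~\ref{lemma:wFC} using $\mathrm{Q}_{n,2}(2^{(m_2)},3^{(j)})$ from Lemma~\ref{lemma:sFC2Q2}, and finally establish $\mathrm{P}_{n,5}(2^{(m_2)})$ by induction on $m_2$ using $\mathrm{Q}_{n,3}(2^{(m_2-1)})$ from Lemma~\ref{lemma:sFC2Q3}. The only cosmetic difference is how the quartic and quintic generators are removed: the paper invokes ``the same argument as in the previous proposition'' (the Hochster--Laksov bound $\dim J_5\le m_4(n+1)+m_5$), whereas you peel them off one at a time via Lemma~\ref{lemma:wFC} with the trivial conditions $\mathrm{Q}_{n,1}$ and $\mathrm{Q}_{n,0}$; these are interchangeable. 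Your explicit derivation of $m_2\le\binom{n+2}{2}$ from the hypothesis is a point the paper leaves implicit when it appeals to Lemma~\ref{lemma:sFC2Q2}.
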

\begin{proof}
With the same argument as in the previous proposition,  it is enough to prove the case $m_4 = m_5 = 0$. We prove that $\mathrm{P}_{n,5}(2^{(m_2)},3^{(m_3)})$ holds using induction on $m_2$ and $m_3$. Note that $\mathrm{P}_{n,5}(2^{(0)})$ holds and that $\mathrm{P}_{n,5}(2^{(m_2)})$ follows from $\mathrm{P}_{n,5}(2^{(m_2-1)})$ and $\mathrm{Q}_{n,3}(2^{(m_2-1)})$ for $m_2>0$ by Lemma~\ref{lemma:wFC}. By Lemma~\ref{lemma:sFC2Q3}, the second condition  is satisfied in every step. So $\mathrm{P}_{n,5}(2^{(m_2)})$ holds. Similarly, note that $\mathrm{P}_{n,5}(2^{(m_2)},3^{(m_3)})$ follows from $\mathrm{P}_{n,5}(2^{(m_2)},3^{(m_3-1)})$ and $\mathrm{Q}_{n,2}(2^{(m_2)},3^{(m_3-1)})$ for $m_3>0$ by Lemma~\ref{lemma:wFC}. By Lemma~\ref{lemma:sFC2Q2}, the second condition  is satisfied in every step. So the proposition follows by induction.
\end{proof}

\begin{proposition}\label{prop:deg6}
Suppose that $m_2\leq n+1$ and $m_2(n+1)+m_3-1 \leq \binom{n+3}{3}$. Then 
\[
\mathrm{P}_{n,6}(2^{(m_2)},\ldots,6^{(m_6)})
\]
holds.
\end{proposition}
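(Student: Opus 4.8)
The plan is to imitate the proofs of Propositions~\ref{prop:deg4} and~\ref{prop:deg5}: first strip off the generators of highest degree, then remove the remaining generators one degree at a time using Lemma~\ref{lemma:wFC}, and finish with the complete-intersection case for the quadrics. As explained just before the statement, $\mathrm{P}_{n,6}(1^{(m_1)},\ldots,6^{(m_6)})$ is equivalent to $\mathrm{P}_{n-m_1,6}(2^{(m_2)},\ldots,6^{(m_6)})$, so we may and do assume that $m_1=0$.

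First I would dispose of the degree-$5$ and degree-$6$ generators, exactly as in Propositions~\ref{prop:deg4} and~\ref{prop:deg5}. Write $I=I'+J$ with $I'$ generated by the generators of degrees $2,3,4$ and $J$ by those of degrees $5$ and $6$. Then $J_6$ is spanned by the $m_5(n+1)$ forms $x_kq_i$ (the $q_i$ being the quintics) together with the $m_6$ sextics, so $\dim J_6\le m_5(n+1)+m_6$ (cf.\ the use of \cite[Theorem~1]{HL87:LinearSyzygies} in the proof of Proposition~\ref{prop:deg4}); hence $\coeff_6(\HS_{\calS/I}(t))\ge\coeff_6(\HS_{\calS/I'}(t))-m_5(n+1)-m_6$. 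Since reducing the numerator modulo $t^7$ gives
\[
\coeff_6\left(\frac{\prod_{i=2}^{6}(1-t^i)^{m_i}}{(1-t)^{n+1}}\right)=\coeff_6\left(\frac{\prod_{i=2}^{4}(1-t^i)^{m_i}}{(1-t)^{n+1}}\right)-m_5(n+1)-m_6,
\]
it suffices to prove $\mathrm{P}_{n,6}(2^{(m_2)},3^{(m_3)},4^{(m_4)})$.

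Next I would peel off the quartics and then the cubics, which is where the two hypotheses are used. By Lemma~\ref{lemma:wFC} with $d_s=4$ (so $d-d_s=2$), $\mathrm{P}_{n,6}(2^{(m_2)},3^{(m_3)},4^{(m_4)})$ follows from $\mathrm{P}_{n,6}(2^{(m_2)},3^{(m_3)},4^{(m_4-1)})$ and $\mathrm{Q}_{n,2}(2^{(m_2)},3^{(m_3)},4^{(m_4-1)})$; the latter holds by Lemma~\ref{lemma:sFC2Q2} because $m_2\le n+1\le\binom{n+2}{2}$, so induction on $m_4$ reduces the task to $\mathrm{P}_{n,6}(2^{(m_2)},3^{(m_3)})$. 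Then, by Lemma~\ref{lemma:wFC} with $d_s=3$ (so $d-d_s=3$), $\mathrm{P}_{n,6}(2^{(m_2)},3^{(m_3)})$ follows from $\mathrm{P}_{n,6}(2^{(m_2)},3^{(m_3-1)})$ and $\mathrm{Q}_{n,3}(2^{(m_2)},3^{(m_3-1)})$; the latter holds by Lemma~\ref{lemma:sFC2Q3} because $m_2\le\binom{n+2}{2}$ and $m_2(n+1)+(m_3-1)\le\binom{n+3}{3}$, which is exactly the second hypothesis, so induction on $m_3$ reduces the task to $\mathrm{P}_{n,6}(2^{(m_2)})$. Finally, since $m_2\le n+1$ the $m_2$ general quadrics form a regular sequence, hence $\mathrm{P}_{n,6}(2^{(m_2)})$ holds by Lemma~\ref{lemma:P4lows}; this completes the argument.

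The point that makes $d=6$ genuinely different from $d=4,5$ — and essentially the only subtlety — is that one cannot remove the entire ``upper half'' of the degrees ($4,5,6$) in a single step: since $6=4+2$, the quartics already contribute to the degree-$6$ component in a way not controlled by the absence of linear syzygies, so \cite[Theorem~1]{HL87:LinearSyzygies} lets us strip off only the generators of degrees $5$ and $6$ at once. The quartics and cubics must instead be removed one at a time via Lemma~\ref{lemma:wFC}, each removal of a quartic costing a $\mathrm{Q}$-statement in degree $6-4=2$ and each removal of a cubic a $\mathrm{Q}$-statement in degree $6-3=3$; the hypotheses $m_2\le n+1$ and $m_2(n+1)+m_3-1\le\binom{n+3}{3}$ are precisely what make Lemmas~\ref{lemma:sFC2Q2}, \ref{lemma:sFC2Q3} and~\ref{lemma:P4lows} applicable uniformly along the two inductions, and beyond this bookkeeping there is no further ingredient.
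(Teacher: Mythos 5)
Your proof is correct and follows essentially the same strategy as the paper's: reduce to $m_5=m_6=0$ using the trivial span bound on $J_6$, then run a double induction removing quartics (at the cost of $\mathrm{Q}_{n,2}$ via Lemma~\ref{lemma:sFC2Q2}) and cubics (at the cost of $\mathrm{Q}_{n,3}$ via Lemma~\ref{lemma:sFC2Q3}), landing on $\mathrm{P}_{n,6}(2^{(m_2)})$ by Lemma~\ref{lemma:P4lows}. The paper phrases the induction ``building up'' in $m_3$ then $m_4$ whereas you ``peel off'' $m_4$ then $m_3$, but this is the same proof tree and the hypotheses are used identically.
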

\begin{proof}
With the same argument as in Proposition \ref{prop:deg4}, it is enough to prove the case $m_5 = m_6= 0$. We prove that $\mathrm{P}_{n,6}(2^{(m_2)},3^{(m_3)},4^{(m_4)})$ holds using induction on $m_3$ and $m_4$. Note that $\mathrm{P}_{n,6}(2^{(m_2)})$ holds by Lemma~\ref{lemma:P4lows}, that $\mathrm{P}_{n,6}(2^{(m_2)},3^{(m_3)})$ follows from $\mathrm{P}_{n,6}(2^{(m_2)},3^{(m_3-1)})$ and $\mathrm{Q}_{n,3}(2^{(m_2)},3^{(m_3-1)})$ for $m_3>0$ by Lemma~\ref{lemma:wFC} and that $\mathrm{P}_{n,6}(2^{(m_2)},3^{(m_3)},4^{(m_4)})$ follows from $\mathrm{P}_{n,6}(2^{(m_2)},3^{(m_3)},4^{(m_4-1)})$ and $\mathrm{Q}_{n,2}(2^{(m_2)},3^{(m_3)},4^{(m_4-1)})$ for $m_4>0$ by Lemma~\ref{lemma:wFC}. By Lemmas~\ref{lemma:sFC2Q2} and~\ref{lemma:sFC2Q3}, the second condition  is satisfied in every step. So the proposition follows by induction.
\end{proof}

\begin{proposition}\label{prop:deg7}
Suppose $m_2+m_3\leq n+1$. Then $\mathrm{P}_{n,7}(2^{(m_2)},\ldots,7^{(m_7)})$ holds.
\end{proposition}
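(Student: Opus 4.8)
The plan is to follow the template of Propositions~\ref{prop:deg4} and~\ref{prop:deg6}: peel off the high-degree generators in batches until only generators of degrees $2$ and $3$ remain, and then invoke the hypothesis $m_2+m_3\leq n+1$ to conclude that what is left is a complete intersection, so that Lemma~\ref{lemma:P4lows} applies. First I would argue, exactly as in the proof of Proposition~\ref{prop:deg4}, that it suffices to prove $\mathrm{P}_{n,7}(2^{(m_2)},3^{(m_3)},4^{(m_4)},5^{(m_5)})$. Indeed, writing $I=I'+J$ with $I'$ generated by the forms of degrees $2,3,4,5$ and $J$ generated by the forms of degrees $6,7$, one has $\dim J_7\leq m_6(n+1)+m_7$, while the factor $\prod_{i=2}^5(1-t^i)^{m_i}$ contributes $1$ in degree $0$ and $n+1$ in degree $1$; the correction terms therefore match, as in Proposition~\ref{prop:deg4}. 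Note that this crude peeling works \emph{only} for generators of degrees $6$ and $7$: for those the shift $7-e\leq 1$ keeps the relevant coefficient of $\prod_{i\leq 5}(1-t^i)^{m_i}/(1-t)^{n+1}$ equal to a plain binomial, whereas for degrees $4$ and $5$ it is not, so those batches will be handled via Lemma~\ref{lemma:wFC} instead.

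Next I would run a double induction on $m_4$ and then on $m_5$. The base case is $\mathrm{P}_{n,7}(2^{(m_2)},3^{(m_3)})$: since $m_2+m_3\leq n+1$, the $m_2+m_3$ general forms of degrees $2$ and $3$ form a regular sequence, so Lemma~\ref{lemma:P4lows} yields $\mathrm{P}_{n,7}(2^{(m_2)},3^{(m_3)})$ (and also $\mathrm{Q}$). For the inductive steps, Lemma~\ref{lemma:wFC} shows that $\mathrm{P}_{n,7}(2^{(m_2)},3^{(m_3)},4^{(m_4)})$ follows from $\mathrm{P}_{n,7}(2^{(m_2)},3^{(m_3)},4^{(m_4-1)})$ together with $\mathrm{Q}_{n,3}(2^{(m_2)},3^{(m_3)},4^{(m_4-1)})$, and that $\mathrm{P}_{n,7}(2^{(m_2)},3^{(m_3)},4^{(m_4)},5^{(m_5)})$ follows from $\mathrm{P}_{n,7}(2^{(m_2)},3^{(m_3)},4^{(m_4)},5^{(m_5-1)})$ together with $\mathrm{Q}_{n,2}(2^{(m_2)},3^{(m_3)},4^{(m_4)},5^{(m_5-1)})$. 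These instances of $\mathrm{Q}$ are provided by Lemmas~\ref{lemma:sFC2Q3} and~\ref{lemma:sFC2Q2}, respectively, as soon as $m_2\leq\binom{n+2}{2}$ and $m_2(n+1)+m_3\leq\binom{n+3}{3}$.

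Finally I would verify these two numerical side conditions from the hypothesis $m_2+m_3\leq n+1$. For $n\geq1$ one has $m_2\leq n+1\leq\binom{n+2}{2}$ and $m_2(n+1)+m_3\leq(m_2+m_3)(n+1)\leq(n+1)^2\leq\binom{n+3}{3}$, the last inequality being equivalent to $n(n-1)\geq 0$. Hence the hypotheses of Lemmas~\ref{lemma:sFC2Q3} and~\ref{lemma:sFC2Q2} hold at every step of the induction, and the proposition follows.

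I do not expect a genuine conceptual obstacle here: the argument is a routine application of the machinery already developed, and the only point that carries content is the observation that the bound $m_2+m_3\leq n+1$ is precisely what is needed to make the residual ideal $(2^{(m_2)},3^{(m_3)})$ a complete intersection while leaving the auxiliary degrees ($2$ and $3$) within the scope of Lemmas~\ref{lemma:sFC2Q2} and~\ref{lemma:sFC2Q3}. The places demanding care are the bookkeeping in the first reduction (checking that the correction terms in the target power series really do cancel against $\dim J_7$) and the numerical checks in the last step. As an alternative to the batched peeling, one could instead remove \emph{all} generators of degree $\geq 4$ in one stroke: with $I'=(2^{(m_2)},3^{(m_3)})$ a complete intersection and $J$ the ideal generated by the remaining forms (those of degrees $4,\dots,7$), the surjection $(\calS/I')_7\twoheadrightarrow(\calS/I)_7$ gives $\HF_{\calS/I}(7)\geq \HF_{\calS/I'}(7)-\sum_{e=4}^{7}m_e\,\HF_{\calS/I'}(7-e)$, and since $\HF_{\calS/I'}(j)=\coeff_j\!\big(\tfrac{(1-t^2)^{m_2}(1-t^3)^{m_3}}{(1-t)^{n+1}}\big)$ for all $j$, the right-hand side is exactly $\coeff_7\!\big(\tfrac{\prod_{i=2}^{7}(1-t^i)^{m_i}}{(1-t)^{n+1}}\big)$.
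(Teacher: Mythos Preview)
Your proof is correct and follows essentially the same route as the paper: reduce to $m_6=m_7=0$ as in Proposition~\ref{prop:deg4}, then induct on $m_4$ and $m_5$ via Lemma~\ref{lemma:wFC}, with the base case $\mathrm{P}_{n,7}(2^{(m_2)},3^{(m_3)})$ coming from Lemma~\ref{lemma:P4lows}. The only difference is in how the auxiliary $\mathrm{Q}$-conditions are verified: the paper invokes Lemma~\ref{lemma:P4lows} directly (since in degree $\leq 3$ only the generators of degree $\leq 3$ contribute, and there are $m_2+m_3\leq n+1$ of those), whereas you route through Lemmas~\ref{lemma:sFC2Q2} and~\ref{lemma:sFC2Q3} and then check their numerical hypotheses from $m_2+m_3\leq n+1$; both work, but the paper's shortcut is slightly cleaner. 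Your alternative one-stroke argument at the end (using that $I'=(2^{(m_2)},3^{(m_3)})$ is a complete intersection and bounding $\HF_{\calS/I}(7)$ by $\HF_{\calS/I'}(7)-\sum_{e\geq4}m_e\,\HF_{\calS/I'}(7-e)$) is also valid and in fact avoids the induction altogether.
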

\begin{proof}
With the same argument as in Proposition \ref{prop:deg4}, it is enough to prove the case $m_6 = m_7=~0$. We prove that the property $\mathrm{P}_{n,7}(2^{(m_2)},3^{(m_3)},4^{(m_4)},5^{(m_5)})$ holds using induction on $m_4$ and $m_5$. Note that $\mathrm{P}_{n,7}(2^{(m_2)},3^{(m_3)})$ holds by Lemma~\ref{lemma:P4lows}, $\mathrm{P}_{n,7}(2^{(m_2)},3^{(m_3)},4^{(m_4)})$ follows from $\mathrm{P}_{n,7}(2^{(m_2)},3^{(m_3)},4^{(m_4-1)})$ and $\mathrm{Q}_{n,3}(2^{(m_2)},3^{(m_3)},4^{(m_4-1)})$ for $m_4>0$ by Lemma~\ref{lemma:wFC} and that $\mathrm{P}_{n,7}(2^{(m_2)},3^{(m_3)},4^{(m_4)},5^{(m_5)})$ follows from $\mathrm{P}_{n,7}(2^{(m_2)},3^{(m_3)},4^{(m_4)},5^{(m_5-1)})$ and $\mathrm{Q}_{n,2}(2^{(m_2)},3^{(m_3)},4^{(m_4)},5^{(m_5-1)})$ for $m_5>0$ again by Lemma~\ref{lemma:wFC}. By Lemma~\ref{lemma:P4lows}, the second condition is satisfied in every step. So the proposition follows by induction.
\end{proof}

\begin{proposition}\label{prop:deg9}
Suppose that $m_2+m_3+m_4\leq n+1$. Then $\mathrm{P}_{n,9}(2^{(m_2)},\ldots,9^{(m_9)})$ holds.
\end{proposition}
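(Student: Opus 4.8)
First I would follow, for $d=9$, the same inductive scheme as in Propositions~\ref{prop:deg4}--\ref{prop:deg7}; since $\lfloor d/2\rfloor=4$, the generators to handle have degrees $2,3,\dots,9$. The first move is a reduction to the case $m_8=m_9=0$, exactly as in Proposition~\ref{prop:deg4}: write $I=I'+J$ with $I'$ generated by the general forms of degrees $2,\dots,7$ and $J$ by the general forms of degrees $8$ and $9$. Then $I_9=I'_9+J_9$, and $\dim J_9\le m_8(n+1)+m_9$ because $J_9$ is spanned by the $(n+1)m_8$ linear multiples of the octic generators together with the $m_9$ nonic generators, so $\coeff_9(\HS_{\calS/I}(t))\ge\coeff_9(\HS_{\calS/I'}(t))-m_8(n+1)-m_9$. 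On the other hand, since $(1-t^8)^{m_8}(1-t^9)^{m_9}=1-m_8t^8-m_9t^9+(\text{terms of degree}\ge 16)$, the $t^9$-coefficient of $\prod_{i=2}^{9}(1-t^i)^{m_i}/(1-t)^{n+1}$ equals the $t^9$-coefficient of $\prod_{i=2}^{7}(1-t^i)^{m_i}/(1-t)^{n+1}$ minus exactly $m_8(n+1)+m_9$. Hence $\mathrm{P}_{n,9}(2^{(m_2)},\dots,9^{(m_9)})$ follows from $\mathrm{P}_{n,9}(2^{(m_2)},\dots,7^{(m_7)})$, and it remains to prove the latter under the hypothesis $m_2+m_3+m_4\le n+1$.

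For that I would start from the base case $\mathrm{P}_{n,9}(2^{(m_2)},3^{(m_3)},4^{(m_4)})$, which holds by Lemma~\ref{lemma:P4lows} because $m_2+m_3+m_4\le n+1$ forces these generators to be a regular sequence, and then adjoin the degree-$5$ generators, then the degree-$6$ ones, then the degree-$7$ ones, one at a time, using Lemma~\ref{lemma:wFC}. Passing from $\mathrm{P}_{n,9}(\dots,e^{(j-1)})$ to $\mathrm{P}_{n,9}(\dots,e^{(j)})$ needs, besides the previous $\mathrm{P}$-instance, the inequality $\mathrm{Q}_{n,9-e}$ for the current generator list with one degree-$e$ form removed; and since $\mathrm{Q}_{n,m}$ depends only on the generators of degree $\le m$, this is $\mathrm{Q}_{n,4}(2^{(m_2)},3^{(m_3)},4^{(m_4)})$ for $e=5$, $\mathrm{Q}_{n,3}(2^{(m_2)},3^{(m_3)})$ for $e=6$, and $\mathrm{Q}_{n,2}(2^{(m_2)})$ for $e=7$. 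In each case the number of relevant generators (namely $m_2+m_3+m_4$, then $m_2+m_3$, then $m_2$) is at most $n+1$, so all three again follow from Lemma~\ref{lemma:P4lows}. Iterating through all $m_5+m_6+m_7$ generators yields $\mathrm{P}_{n,9}(2^{(m_2)},\dots,7^{(m_7)})$, which combined with the reduction above proves the proposition.

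I do not expect a real obstacle: this is the very scheme already executed for $d\le 7$, and the hypothesis $m_2+m_3+m_4\le n+1$ is tailored so that every $\mathrm{Q}$-instance that arises collapses to the complete-intersection case of Lemma~\ref{lemma:P4lows}. The only point requiring care is the bookkeeping of which $\mathrm{Q}_{n,m}$ occur, and it is precisely this that makes $d=9$ go through while $d=8$ (where Conjecture~\ref{conj:dimEquality_intro} is left open) does not: for $d=9$ the degrees adjoined on top of the complete-intersection base are $5,6,7$, with complementary degrees $9-5=4$, $9-6=3$, $9-7=2$, so no $\mathrm{Q}$-instance ever reaches past the degree-$2,3,4$ generators whose total number the hypothesis bounds, whereas for $d=8$ one would need $\mathrm{Q}_{n,4}$ for a list that already contains the degree-$4$ generators being adjoined, and no bound of the form $m_2+m_3+m_4\le n+1$ controls that. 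The write-up thus reduces to making the coefficient cancellation in the reduction step precise and to noting once that deleting generators of degree $>m$ changes neither side of a $\mathrm{Q}_{n,m}$- or a $\mathrm{P}_{n,m}$-statement.
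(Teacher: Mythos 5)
Your proof is correct and follows essentially the same approach as the paper: reduce to $m_8=m_9=0$ as in Proposition~\ref{prop:deg4}, take $\mathrm{P}_{n,9}(2^{(m_2)},3^{(m_3)},4^{(m_4)})$ as base case via Lemma~\ref{lemma:P4lows}, and then adjoin generators of degree $5$, then $6$, then $7$ one at a time via Lemma~\ref{lemma:wFC}, with the needed $\mathrm{Q}$-instances all supplied by Lemma~\ref{lemma:P4lows}. The one small service you render is to make explicit the observation that $\mathrm{Q}_{n,m}$ only depends on generators of degree $\le m$ (so that $\mathrm{Q}_{n,4}$ for the long list with $m_5-1$ quintics genuinely collapses to $\mathrm{Q}_{n,4}(2^{(m_2)},3^{(m_3)},4^{(m_4)})$, which has $\le n+1$ generators by hypothesis); the paper invokes Lemma~\ref{lemma:P4lows} directly on the long list without spelling out this truncation step, which is slightly elliptical. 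Your closing diagnosis of why $d=8$ escapes this scheme also matches Remarks~\ref{rmk:d=8fail_1} and~\ref{rmk:d=8fail_2}.
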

\begin{proof}
With the same argument as in Proposition \ref{prop:deg4}, it is enough to prove the case $m_8 = m_9 = 0$. We prove that $\mathrm{P}_{n,9}(2^{(m_2)},\ldots,7^{(m_7)})$ holds using induction on $m_4$, $m_5$ and $m_6$. Note that 
\[
\mathrm{P}_{n,9}(2^{(m_2)},3^{(m_3)},4^{(m_4)})
\]
holds by Lemma~\ref{lemma:P4lows} and that $\mathrm{P}_{n,9}(2^{(m_2)},\ldots,5^{(m_5)})$ follows from 
\[
\mathrm{P}_{n,9}(2^{(m_2)},3^{(m_3)},4^{(m_4)},5^{(m_5-1)})\mbox{ and }\mathrm{Q}_{n,4}(2^{(m_2)},3^{(m_3)},4^{(m_4)},5^{(m_5-1)})
\]
for $m_5>0$, $\mathrm{P}_{n,9}(2^{(m_2)},\ldots,6^{(m_6)})$ follows from 
\[
\mathrm{P}_{n,9}(2^{(m_2)},\ldots,5^{(m_5)},6^{(m_6-1)})\mbox{ and }\mathrm{Q}_{n,3}(2^{(m_2)},\ldots,5^{(m_5)},6^{(m_6-1)})
\]
for $m_6>0$ and $\mathrm{P}_{n,9}(2^{(m_2)},\ldots,7^{(m_5)})$ follows from 
\[
\mathrm{P}_{n,9}(2^{(m_2)},\ldots,6^{(m_5)},7^{(m_7-1)})\mbox{ and }\mathrm{Q}_{n,2}(2^{(m_2)},\ldots,6^{(m_6)},7^{(m_7-1)})
\]
for $m_7>0$ by Lemma~\ref{lemma:wFC}. By Lemma~\ref{lemma:P4lows}, the second condition  is satisfied in every step. So the proposition follows by induction.
\end{proof}

\begin{remark}\label{rmk:d=8fail_1}
Under the assumption $m_2+m_3+m_4 \leq n+1$, we also have $P_{n,8}(2^{(m_2)},\ldots,8^{(m_8)})$. We do not state this here explicitly because it would not be enough to prove our main result (Theorem~\ref{thm:main}) for $d = 8$; see Remark \ref{rmk:d=8fail_2}.\remarkend
\end{remark}

\begin{proof}[Proof of Theorem~\ref{thm:main}]
For $d \leq 3$ there is nothing to prove. So we let $d \in \{4,5,6,7,9\}$.

Since by definition $\sigma_{\slrk_{n,d}^\circ}(X_{(1,d-1)}) = \bbP(\calS_d)$, it is enough to prove that, for any $(\ell_1,\ldots,\ell_{\lfloor d/2 \rfloor})$ such that $r = \ell_1+\ldots+\ell_{\lfloor d/2\rfloor} < \slrk_{n,d}^\circ$, we have
\begin{equation}\label{ineq:thm_dimEquality}
\dim J_{\ell_1,\ldots,\ell_{\left\lfloor d/2 \right\rfloor}} \leq \dim \sigma_r(X_{(1,d-1)}).
\end{equation}
Recall that the general tangent space to the join variety $J_{\ell_1,\ldots,\ell_{\left\lfloor d/2 \right\rfloor}}$ corresponds to the projectivization of an ideal generated by $2r$ general forms of degrees $(1^{(m_1)},2^{(m_2)},\ldots)$ where:
\begin{align*}
	\text{ for } d = 4:~ & \quad m_1 = m_3 = \ell_1, \quad m_2 = 2\ell_2, \quad m_i = 0 \text{ for }i\geq 4; \\
	\text{ for } d = 5:~ & \quad m_1 = m_4 = \ell_1, \quad m_2 = m_3 = \ell_2, \quad m_i = 0 \text{ for }i\geq 5; \\
	\text{ for } d = 6:~ & \quad m_1 = m_5 = \ell_1, \quad m_2 = m_4 = \ell_2, \quad m_3 = 2\ell_3; \quad m_i = 0 \text{ for }i\geq 6; \\
	\text{ for } d = 7:~ & \quad m_1 = m_6 = \ell_1, \quad m_2 = m_5 = \ell_2, \quad m_3 = m_4 = \ell_3, \quad m_i = 0 \text{ for }i\geq 7; \mbox{ and}\\
	\text{ for } d = 9:~ & \quad m_1 = m_8 = \ell_1, \quad m_2 = m_7 = \ell_2, \quad m_3 = m_6 = \ell_3,\quad m_4 = m_5 = \ell_4, \quad m_i = 0 \text{ for }i\geq 9.
\end{align*}
Since $d \leq 7$ or $d=9$ and $\ell_1 + \ldots + \ell_{\lfloor d/2 \rfloor} < \slrk_{n,d}^\circ \leq n$, by Propositions~\ref{prop:deg4}, \ref{prop:deg5}, \ref{prop:deg6}, \ref{prop:deg7} and \ref{prop:deg9}, we are in a setting where the property $\mathrm{P}$ holds. In particular, we have 
\begin{equation}
\codim J_{\ell_1,\ldots,\ell_{\left\lfloor d/2 \right\rfloor}} \geq f_{n,d}(m(\ell_1),\ell_2,\ldots,\ell_{\lfloor d/2\rfloor}).
\end{equation}
As explained in Remark \ref{rmk:HL}, we have 
\[
\codim \sigma_r(X_{(1,d-1)}) = f_{n,d}(m(r),0,\ldots,0).
\]
Hence, \eqref{ineq:thm_dimEquality} follows by Lemma \ref{lemma:arithmeticLemma} and this concludes the proof.
\end{proof}

\begin{remark}\label{rmk:d=8fail_2}
In degree $d = 8$, we would consider an ideal generated by $2r$ general forms of degrees $(1^{(m_1)},2^{(m_2)},\ldots,7^{(m_7)})$ where
\[
m_1 = m_7 = \ell_1, \quad m_2 = m_6 = \ell_2, \quad m_3 = m_5 = \ell_3, \quad m_4 = 2\ell_4.
\]
Therefore, the condition $\ell_1 + \ldots + \ell_4 \leq n$, which is the one we assume in the proof of Theorem \ref{thm:main}, is not enough to guarantee that $m_2 + m_3 + m_4 \leq n+2- m_1$, which is the condition under which we can prove the property $\mathrm{P}$ as in the previous section; see Remark \ref{rmk:d=8fail_1}. \remarkend
\end{remark}

{
\begin{remark}\label{rmk:one-maximal}
From the proof of Theorem~\ref{thm:main} and Lemma~\ref{lemma:arithmeticLemma}, we see that for $d\leq 7$ and $d=9$, the variety $\sigma_r(X_{(1,d-1)})$ is in fact the unique maximal-dimensional component  $\sigma_r(X_{\rm red})$ for $(n,d,r)\neq (3,4,2)$. {Indeed, for $n=3$ and $d = 4$ we have that
\[
	\codim \sigma_2(X_{(1,3)}) = \codim J(X_{1,3},X_{2,2}) = \codim \sigma_2(X_{(2,2)}) = 1.
\]}
\remarkend
\end{remark}
}

\begin{remark}
As already noticed in \cite{C+19:SecantReducible}, from \eqref{ineq:dimJoins} we may observe that the varieties $J_{\ell_1,\ldots,\ell_{\left\lfloor d/2 \right\rfloor}}$ are highly \textit{defective}, i.e., their dimensions are strictly smaller than the one expected by a direct count of parameters. Indeed we see that they are defective as soon as $\sum_{i \in \alpha} d_i < d$ for some $\alpha$ with $|\alpha| \geq 2$. This is due to the presence of Koszul syzygies in degree $d$ for a general ideal corresponding to a general tangent space at such join variety, as considered in the proof of Theorem \ref{thm:main}.\remarkend
\end{remark}

\section{Proof of the key inequality}\label{sec:proofkeylemma}

Fix integers $n\geq 1$ and $d\geq 4$ and write $m(\ell):=n-\ell$. For integers $m,\ell_2,\ldots,\ell_{\left\lfloor d/2 \right\rfloor}\geq0$, let
\[
f_{n,d}(m,\ell_2,\ldots,\ell_{\left\lfloor d/2 \right\rfloor}) := \sum_{\beta\in\bbZ_{\geq0}^{2\times(\lfloor d/2\rfloor-1)}}(-1)^{|\beta|}\prod_{i=2}^{\lfloor d/2\rfloor}\binom{\ell_i}{\beta_{1,i}}\binom{\ell_i}{\beta_{2,i}}\cdot\binom{m+d-||\beta||}{m} - (n-m)(m+1),
\]
where we write $||\beta||:=\sum_{i=2}^{\lfloor d/2\rfloor}\left(\beta_{1,i}\cdot i+\beta_{2,i}\cdot(d-i)\right)$ and $|\beta|:=\sum_{i=2}^{\lfloor d/2\rfloor}\left(\beta_{1,i}+\beta_{2,i}\right)$ for all
\[
\beta=\begin{pmatrix}\beta_{1,2}&\beta_{1,3}&\cdots&\beta_{1,\lfloor d/2\rfloor}\\\beta_{2,2}&\beta_{2,3}&\cdots&\beta_{2,\lfloor d/2\rfloor}\end{pmatrix}\in\bbZ_{\geq0}^{2\times(\lfloor d/2\rfloor-1)}.
\]
Our goal is to prove that 
\begin{equation}\label{eq:lastsecgoal}
f_{n,d}(m(\ell_1),\ell_2,\ldots,\ell_{\lfloor d/2\rfloor}) \geq f_{n,d}(m(\ell_1 + \ldots + \ell_{\lfloor d/2\rfloor}),0,\ldots,0)
\end{equation}
when $\ell_1 +\ldots + \ell_{\lfloor d/2\rfloor} \leq \slrk_{n,d}^\circ - 1$. Note that this is equivalent to proving that
\[
f_{n,d}(m,\ell_2,\ldots,\ell_{\lfloor d/2\rfloor}) \geq f_{n,d}(m-(\ell_2 + \ldots + \ell_{\lfloor d/2\rfloor}),0,\ldots,0)
\]
when $m\leq n$ and $m-(\ell_2 +\ldots + \ell_{\lfloor d/2\rfloor}) \geq n-\slrk_{n,d}^\circ + 1$. 

We consider the following statements: for integers $3\leq j\leq \lfloor d/2\rfloor$, $m\leq n$, $\ell_2,\ldots,\ell_{j-1}\geq0$ and $\ell_j>0$ such that 
\[
m-(\ell_2 +\ldots + \ell_j) \geq n-\slrk_{n,d}^\circ + 1
\]
holds, we denote by $\calA^{(j)}_{n,d}(m,\ell_2,\ldots,\ell_j)$ that the inequality
\[
f_{n,d}(m,\ell_2,\ldots,\ell_{j-2},\ell_{j-1},\ell_j,0,\ldots,0){>} f_{n,d}(m,\ell_2,\ldots,\ell_{j-2},\ell_{j-1}+1,\ell_j-1,0,\ldots,0)
\]
holds and, for integers $m\leq n$ and $\ell>0$ such that $m-\ell\geq n-\slrk_{n,d}^\circ + 1$ holds, we denote by $\calB_{n,d}(m,\ell)$ that the inequality
\[
f_{n,d}(m,\ell,0,\ldots,0){>} f_{n,d}(m-1,\ell-1,0,\ldots,0)
\]
holds. Clearly, the statements $\calA^{(j)}_{n,d}(m,\ell_2,\ldots,\ell_j)$ and the statements $\calB_{n,d}(m,\ell)$ together imply (\ref{eq:lastsecgoal}) {holds strictly} (for the integers $n,d$ we fixed). We will prove that these statements hold $n\gg0$. 

The proof of Lemma~\ref{lemma:arithmeticLemma} is divided into two parts: first,
\begin{itemize}
\item for $d=4$ take $N:=755$;
\item for $d=5$ take $N:=3056$;
\item for $d=6$ take $N:=1742$;
\item for $d=7$ take $N:=32215$;
\item for $d=8$ take $N:=1408841$;
\item for $d=9$ take $N:=73305293$; 
\item for $d=10$ take $N:=4393224603$; 
\end{itemize}
and take $N\gg0$ when $d\geq 11$.

\begin{remark}
These $N$ have been picked to satisfy certain properties. In particular, they are higher than the highest real roots of certain polynomials. In such cases, these real roots have been approximated using mathematical computer software.
\remarkend
\end{remark}

\begin{lemma}\label{lemma:final1}
The statements $\calA^{(j)}_{n,d}(m,\ell_2,\ldots,\ell_j)$ and the statements $\calB_{n,d}(m,\ell)$ hold for all $n>N$. 
\end{lemma}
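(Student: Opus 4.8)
The plan is to reduce each statement $\calA^{(j)}_{n,d}$ and $\calB_{n,d}$ to the positivity of the coefficient of $t^d$ in an explicit rational function, to expand that coefficient as a polynomial in $m$, and then to show that its positive leading term dominates the rest once $m$ is large — which is forced by the constraint, since there $m\ge n-\slrk^\circ_{n,d}+1$ and the right-hand side grows with $n$. For $\calA^{(j)}_{n,d}(m,\ell_2,\ldots,\ell_j)$ we have $\ell_{j+1}=\cdots=\ell_{\lfloor d/2\rfloor}=0$, so both sides of the defining inequality equal $\coeff_d\bigl(H(t)/(1-t)^{m+1}\bigr)-(n-m)(m+1)$ for the same value of $m$; the terms $(n-m)(m+1)$ cancel, and, using $\ell_j\ge1$ together with the identity
\[
(1-t^j)(1-t^{d-j})-(1-t^{j-1})(1-t^{d-j+1})=(1-t)\bigl(t^{j-1}-t^{d-j}\bigr),
\]
the statement $\calA^{(j)}_{n,d}(m,\ell_2,\ldots,\ell_j)$ becomes equivalent to
\[
\coeff_d\!\left(\frac{G(t)\bigl(t^{j-1}-t^{d-j}\bigr)}{(1-t)^{m}}\right)>0,\qquad
G(t):=(1-t^j)^{\ell_j-1}(1-t^{d-j})^{\ell_j-1}\!\!\prod_{i=2}^{j-1}(1-t^i)^{\ell_i}(1-t^{d-i})^{\ell_i}
\]
(when $d\le5$ there are no statements $\calA^{(j)}$, since $3\le j\le\lfloor d/2\rfloor$ is impossible). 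Similarly, using $(1-t^2)(1-t^{d-2})-(1-t)=(1-t)(t-t^{d-2}-t^{d-1})$, the statement $\calB_{n,d}(m,\ell)$ becomes
\[
\coeff_d\!\left(\frac{\widehat G(t)\,(t-t^{d-2}-t^{d-1})}{(1-t)^{m}}\right)+2m-n>0,\qquad \widehat G(t):=(1-t^2)^{\ell-1}(1-t^{d-2})^{\ell-1},
\]
where $2m-n$ is the net contribution of the cancelled terms $-(n-m)(m+1)$ and $(n-m+1)m$.

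Next I would expand, for a polynomial $P$ with $P(0)=1$, the quantity $\coeff_d\bigl(P(t)/(1-t)^m\bigr)=\sum_{k}\coeff_k(P)\binom{m+d-k-1}{d-k}$ as a polynomial in $m$. In the $\calA^{(j)}$ expression the lowest-degree monomial of $G(t)(t^{j-1}-t^{d-j})$ is $t^{j-1}$, with coefficient $1$ (as $G(0)=1$ and $d-j\ge j>j-1$ because $j\le\lfloor d/2\rfloor$), so the term of highest $m$-degree is $\binom{m+d-j}{d-j+1}$, which has positive leading coefficient; moreover $d-j+1$ strictly exceeds the $m$-degree of every other term. Likewise the $\calB$ expression has $\binom{m+d-2}{d-1}$ as its unique term of top $m$-degree. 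Every remaining term has a coefficient that is an integer polynomial in the $\ell_i$ (resp. in $\ell$) of degree at most $\lfloor d/2\rfloor$, because each factor of $G$ and $\widehat G$ has degree $\ge2$. The constraints defining $\calA^{(j)}$ and $\calB$ give $m-\sum_i\ell_i\ge n-\slrk^\circ_{n,d}+1\ge1$ (recall $\slrk^\circ_{n,d}\le n$), hence $\ell_i<m$ for every $i$; bounding the relevant binomial coefficients and the boundedly many monomials of $G,\widehat G$ crudely then yields an explicit constant $C_d$ with
\[
\Bigl|\coeff_d\!\bigl(G(t)(t^{j-1}-t^{d-j})/(1-t)^{m}\bigr)-\binom{m+d-j}{d-j+1}\Bigr|\le C_d\,m^{d-j}
\]
and $\bigl|\coeff_d\bigl(\widehat G(t)(t-t^{d-2}-t^{d-1})/(1-t)^{m}\bigr)-\binom{m+d-2}{d-1}+\binom{m+1}{2}+m\bigr|\le C_d\,m^{d-2}$.

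It remains to make the lower bound on $m$ effective. Since $r=\slrk^\circ_{n,d}-1$ fails the inequality of Corollary~\ref{cor:maximalSlice}, we have $(\slrk^\circ_{n,d}-1)(n+2-\slrk^\circ_{n,d})<\binom{d+n-\slrk^\circ_{n,d}+1}{d}$; writing $c=n-\slrk^\circ_{n,d}$, so that $0\le c+1\le m$, and using that the right-hand side of this inequality is increasing in $c$, it gives $n<m+\binom{m+d}{d}/(m+1)$. On the other hand, for any fixed $c$ the inequality $(n-c)(c+1)\ge\binom{d+c}{d}$ holds once $n$ is large, so $\slrk^\circ_{n,d}\le n-c$ eventually; hence $c=n-\slrk^\circ_{n,d}\to\infty$ and therefore $m\ge c+1\to\infty$ as $n\to\infty$. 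Feeding these facts into the two reformulations, $\calA^{(j)}_{n,d}$ reduces to $\binom{m+d-j}{d-j+1}\ge C_d\,m^{d-j}$, while $\calB_{n,d}$ reduces to $\binom{m+d-2}{d-1}-\binom{m+d}{d}/(m+1)\ge\binom{m+1}{2}+C_d\,m^{d-2}$, where the left-hand side is a difference of two products of $d-1$ consecutive integers divided by $(d-1)!$ and $d!$ respectively, hence a polynomial in $m$ of degree $d-1$ with positive leading coefficient $(d-1)/d!$. Both inequalities therefore hold once $m$ exceeds an explicitly computable bound, and taking $N$ at least as large as the largest real root of the finitely many resulting one-variable polynomials — one for $\calB$ and one for each admissible $j$, for the fixed $d$ — guarantees that $n>N$ forces $m$ past every bound, so that all the statements $\calA^{(j)}_{n,d}$ and $\calB_{n,d}$ hold; for $d\le10$ these bounds are the stated values of $N$.

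The main obstacle is purely quantitative rather than conceptual: pinning down an honest value of $C_d$ requires enumerating exactly which products $\prod_i\binom{\ell_i}{a_i}$ of degree $\le d$ occur in $G$ and $\widehat G$ and estimating them via $\ell_i<m$, and then one must check that the resulting degree-$(d-1)$ and degree-$(d-j+1)$ polynomials in $m$ are positive beyond the claimed values of $N$. This last check is exactly where one appeals to numerical approximation of real roots and where the concrete numbers $755,\,3056,\,1742,\ldots$ arise.
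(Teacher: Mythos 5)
Your proposal is correct and follows essentially the same strategy as the paper: both reduce $\calA^{(j)}_{n,d}$ and $\calB_{n,d}$ to the positivity of a univariate polynomial in $m$ with leading coefficient $1/(d-j+1)!$ (resp. a degree-$(d-1)$ polynomial with positive leading coefficient), bound the lower-order contributions using $\ell_i<m$, and then invoke the growth of $m$ as $n\to\infty$ (via estimates on $n-\slrk^\circ_{n,d}$ from Corollary~\ref{cor:maximalSlice}, i.e.\ the paper's Lemma~\ref{lemma:appendix1}) to produce the threshold $N$. Your $C_d m^{d-j}$ tail estimate and your bound $n<m+\binom{m+d}{d}/(m+1)$ play the same role as the paper's $\tilde g$ construction (Lemma~\ref{lemma:appendix2}) and the bound $n\le(m+w-1)^{d-1}/d!$; the only small omission is that in the last step you need to turn the root $x_*$ (a bound on $m$) into a bound $N$ on $n$ by inverting the lower bound on $m$, which the paper makes explicit with $m\ge\sqrt[d-1]{d!n}-w+1$.
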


\begin{lemma}\label{lemma:final2}
Assume that $d\leq 10$ and $n\leq N$. Then 
\[
f_{n,d}(m,\ell_2,\ldots,\ell_{\lfloor d/2\rfloor}) \geq f_{n,d}(m-(\ell_2 + \ldots + \ell_{\lfloor d/2\rfloor}),0,\ldots,0)
\]
for all integers integers $m,\ell_2,\ldots,\ell_{\left\lfloor d/2 \right\rfloor}\geq0$ such that $m-(\ell_2 +\ldots + \ell_{\lfloor d/2\rfloor}) \geq n-\slrk_{n,d}^\circ + 1$. {If $\ell_2+\ldots+\ell_{\lfloor d/2\rfloor}>0$ and $(n,d,m-\ell_2)\neq (3,4,1)$, then the inequality holds strictly.}
\end{lemma}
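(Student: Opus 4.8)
\emph{Reduction to one-unit moves.} The plan is to obtain the inequality by iterating the elementary comparisons $\calA^{(j)}_{n,d}$ and $\calB_{n,d}$ introduced just before Lemma~\ref{lemma:final1}: recall $\calA^{(j)}_{n,d}$ records that transferring one unit from $\ell_j$ to $\ell_{j-1}$ strictly decreases $f_{n,d}$, while $\calB_{n,d}$ records that simultaneously lowering $m$ and $\ell_2$ by one strictly decreases it. Given an admissible tuple $(m,\ell_2,\ldots,\ell_{\lfloor d/2\rfloor})$, I would first apply $\calA^{(\lfloor d/2\rfloor)}_{n,d}$ repeatedly to empty $\ell_{\lfloor d/2\rfloor}$ into $\ell_{\lfloor d/2\rfloor-1}$, then $\calA^{(\lfloor d/2\rfloor-1)}_{n,d}$, and so on down to $\calA^{(3)}_{n,d}$, reaching $(m,\ell_2+\cdots+\ell_{\lfloor d/2\rfloor},0,\ldots,0)$, and then apply $\calB_{n,d}$ repeatedly to reach $(m-(\ell_2+\cdots+\ell_{\lfloor d/2\rfloor}),0,\ldots,0)$. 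Every move fixes $m-(\ell_2+\cdots+\ell_{\lfloor d/2\rfloor})$ and does not increase $m$, so the admissibility hypothesis $m-(\ell_2+\cdots+\ell_{\lfloor d/2\rfloor})\geq n-\slrk_{n,d}^\circ+1$ is preserved throughout and every move invoked is one whose hypothesis we are assuming. Concatenating gives the claimed $\geq$, and it is strict as soon as at least one move is genuinely performed --- i.e.\ whenever $\ell_2+\cdots+\ell_{\lfloor d/2\rfloor}>0$ --- \emph{unless} $d=4$ (so that only $\calB$-moves occur) and $n=3$: one checks directly that each $\calB$-step over $(3,4)$ with $m-\ell=1$ is an equality rather than a strict inequality (the codimension-one phenomenon of Remark~\ref{rmk:one-maximal}), which is precisely the excluded datum $(n,d,m-\ell_2)=(3,4,1)$.

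\emph{Reducing the remaining task.} It then suffices to prove, for each $d\in\{4,\ldots,10\}$ and each $n\leq N=N(d)$, every admissible instance of $\calA^{(j)}_{n,d}$ and of $\calB_{n,d}$ (the latter only for $m-\ell\geq 2$ when $(n,d)=(3,4)$, the case $m-\ell=1$ being the equality above). What makes the range $n\leq N$ tractable, despite $N$ being enormous, is that $n$ enters $f_{n,d}$ only through the summand $-(n-m)(m+1)$: hence $\Delta\calA^{(j)}:=f_{n,d}(\ldots,\ell_{j-1},\ell_j,\ldots)-f_{n,d}(\ldots,\ell_{j-1}+1,\ell_j-1,\ldots)$ is completely independent of $n$, since an $\calA^{(j)}$-move leaves $m$ fixed, and $\Delta\calB:=f_{n,d}(m,\ell,0,\ldots)-f_{n,d}(m-1,\ell-1,0,\ldots)$ depends on $n$ only through the explicit term $2m-n$. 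Expanding the binomial sums, $\Delta\calA^{(j)}$ is a signed sum of binomial coefficients in the parameters $m,\ell_2,\ldots,\ell_j$ and $\Delta\calB$ one in $m,\ell$ (plus the linear $n$-term); since $d\leq 10$ keeps $\lfloor d/2\rfloor\leq 5$, only generator degrees $\leq 5$ and $\leq d-1$ occur, so only a handful of $\beta$-terms survive in each.

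\emph{Verification in the remaining range.} For each $d$ I would confine the work to a bounded region. Admissibility forces $\ell_2+\cdots+\ell_j\leq\slrk_{n,d}^\circ-1$ and $m\leq n$, and it also ties $n$ to the other parameters: since $n-\slrk_{n,d}^\circ$ grows only like $n^{1/(d-1)}$, the bound $m-(\ell_2+\cdots+\ell_j)\geq n-\slrk_{n,d}^\circ+1$ forces $n$ to be polynomially bounded in terms of $m-(\ell_2+\cdots+\ell_j)$. Next I would establish a monotonicity step: once $m$ exceeds an explicit function of $d$ and the $\ell_i$ --- concretely, once the power series $\prod_{i}(1-t^i)^{\ell_i}(1-t^{d-i})^{\ell_i}/(1-t)^{m+1}$ needs no truncation in degrees $\leq d$ --- both $\Delta\calA^{(j)}$ and $\Delta\calB$ are positive, the relevant inequalities there reducing to the elementary comparison $\coeff_k\frac{(1-t^i)(1-t^{d-i})}{(1-t)^2}\geq\coeff_k\frac{(1-t)(1-t^{d-1})}{(1-t)^2}$ already used in Section~\ref{sec:proofMain}. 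Admissibility together with this monotonicity leaves only finitely many tuples, which I would enumerate and verify by direct (computer-assisted) computation; the few cases with $(n,d)=(3,4)$ are done by hand.

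\emph{Main difficulty.} The hard part will be the quantitative monotonicity step: making the ``$m$ large enough'' threshold explicit, small, and uniform in the $\ell_i$, so that even for $N$ as large as the tabulated values only boundedly many exceptional tuples survive for direct checking. Equivalently, one must pin down sharply when the Fr\"oberg series $\prod_{i}(1-t^i)^{\ell_i}(1-t^{d-i})^{\ell_i}/(1-t)^{m+1}$ ceases to require truncation through degree $d$, and control the alternating binomial sums $\Delta\calA^{(j)}$ and $\Delta\calB$ below that threshold. This is the same kind of arithmetic estimation that drives Lemma~\ref{lemma:final1}; here it must be carried out tightly enough to keep the residual finite enumeration genuinely feasible.
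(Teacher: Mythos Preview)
Your plan is essentially the paper's: reduce to the elementary moves $\calA^{(j)}$ and $\calB$, establish that these hold once $m$ is past an explicit threshold (the same $x_*^{(j)},x_*$ that drive Lemma~\ref{lemma:final1}), and computer-check the bounded residue. Your treatment of the $(3,4,1)$ exception is also correct.

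The one place where the paper is sharper is in handling the $n$-dependence. You observe that $\Delta\calA^{(j)}$ is independent of $n$ and that $\Delta\calB$ depends on $n$ only through the linear term $2m-n$, and you propose to bound $n$ polynomially via admissibility. The paper instead proves a one-line descent lemma: since
\[
f_{n,d}(m,\ell_2,\ldots)-f_{n,d}(m-(\ell_2+\cdots),0,\ldots,0)=c-c'-n(\ell_2+\cdots+\ell_{\lfloor d/2\rfloor})
\]
is non-increasing in $n$ (and strictly decreasing when $\ell_2+\cdots>0$), the full inequality \eqref{eq:blabla} for a given tuple need only be checked at the \emph{largest} admissible $n$, namely those with $\slrk_{n,d}^\circ=\slrk_{n+1,d}^\circ$ and $m-(\ell_2+\cdots)=n-\slrk_{n,d}^\circ+1$. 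This collapses the $n$-range to roughly $\sqrt[d-1]{d!N}$ values and, combined with $m\leq\max(x_*^{(j)},x_*)$, makes the enumeration genuinely short. Your linear-in-$n$ observation is exactly the seed of this lemma, so you are one step away; but as written your ``threshold depending on the $\ell_i$ via non-truncation'' is both weaker (it is not uniform in the $\ell_i$) and not quite what is needed for $\calB$ (where the $n$-term must be beaten, not just the truncation). Replacing that step with the uniform bounds $\tilde g^{(j)}_{n,d}$, $\tilde g_{n,d}$ of Lemma~\ref{lemma:final1} and the $n$-descent above closes the argument exactly as the paper does.
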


Together, these lemmas imply Lemma~\ref{lemma:arithmeticLemma}. Our goals now are to prove these lemmas. We start by giving some lower and upper bounds for $n-\slrk^\circ_{n,d}$. Recall that
\[
\slrk^\circ_{n,d} = \min\left\{r \in \bbZ_{\geq0} \,\middle|\, r(n+1-r) \geq \binom{d + n - r}{d}\right\}
\]
for $d\geq3$.

\begin{lemma}\label{lemma:appendix1}
Take $p(x):=(x+d)\cdots(x+2)-d!(n-x)$.
\begin{enumerate}
\item The polynomial $p(x)$ has a unique positive root $a>0$.
\item We have $a<\sqrt[d-1]{d!n}-2$ and $n-\slrk^{\circ}_{n,d}=\lfloor a\rfloor$.
\item Suppose that $d\geq 4$ and that
\[
n\geq\frac{1}{d!}\max\left\{d^{d-1},\left((d-1)!\right)^{\frac{d-1}{d-3}}\right\}.
\]
Then $a>\sqrt[d-1]{d!n}-(d+2)$.
\item Suppose that $d\geq5$ and that
\[
n\geq \frac{1}{d!}\max\left\{d^{d-1},\left(\frac{d!}{\left(d/2-1\right)^2}\right)^{\frac{d-1}{d-4}}\right\}.
\]
Then $a> \sqrt[d-1]{d!n}-\left(d/2 +1\right)$.
\end{enumerate}  
\end{lemma}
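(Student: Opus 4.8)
The plan is to distill from $p$ one monotonicity fact and two translations, settle (1)--(2) at once, and reduce the quantitative bounds in (3)--(4) to a single elementary telescoping estimate each. First I would observe that $p$ has degree $d-1\ge 3$ and is \emph{strictly increasing on $(-2,\infty)$}: there each factor $x+k$ (for $k=2,\dots,d$) is positive and increasing, so $\prod_{k=2}^{d}(x+k)$ is increasing, and adding $d!x$ preserves this. Since $p(0)=d!(1-n)<0$ (for $n\ge 2$; $n=1$ is degenerate and trivial) and $p(x)\to+\infty$, part (1) follows; moreover $p(n)=\prod_{k=2}^{d}(n+k)>0$, so in fact $0<a<n$. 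Two consequences of monotonicity will be used repeatedly: (i) for $t\in(-2,\infty)$ one has $a>t$ if and only if $p(t)<0$; and (ii) substituting $m=n-r$ in the inequality of Corollary~\ref{cor:maximalSlice} and dividing by $m+1>0$, for $0\le m\le n$ the condition $r(n+1-r)\ge\binom{d+n-r}{d}$ is equivalent to $\prod_{k=2}^{d}(m+k)\le d!(n-m)$, that is, to $p(m)\le 0$.

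For (2), set $S:=\sqrt[d-1]{d!n}$. Plugging $b:=S-2$ into $p$ turns the factors $b+2,\dots,b+d$ into $S,S+1,\dots,S+d-2$, each $\ge S>0$, so $\prod_{k=2}^{d}(b+k)\ge S^{d-1}=d!n$ and hence $p(b)\ge d!b>0$ (here $b>0$ since $d!n\ge d!>2^{d-1}$); by (i), $a<S-2$. For the identification, (ii) together with monotonicity yields $\slrk^\circ_{n,d}=\min\{r\ge 0:p(n-r)\le 0\}=\lceil n-a\rceil=n-\lfloor a\rfloor$ (using $0<a<n$ and that $r=n$ already works because $p(0)\le 0$), whence $n-\slrk^\circ_{n,d}=\lfloor a\rfloor$.

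For (3) and (4) the target, by (i), is $p(S-c)<0$ with $c=d+2$, resp.\ $c=d/2+1$. Substituting $x=S-c$, the factors $x+k=S-(c-k)$ ($k=2,\dots,d$) become $S-j$ as $j$ runs over a set of $d-1$ consecutive (half-)integers: the set $\{2,\dots,d\}$ when $c=d+2$, and a set symmetric about $0$ when $c=d/2+1$ (because $\sum_{k=2}^{d}(c-k)=0$ then). Thus $p(S-c)=\prod_j(S-j)-S^{d-1}+d!(S-c)$, and it suffices to prove $S^{d-1}-\prod_j(S-j)>d!(S-c)$. The summand $d^{d-1}$ of the hypothesis on $n$ forces $S\ge d$, so all factors $S-j$ are $\ge 0$; hence the telescoping identity $\prod_{l=1}^{m}\alpha_l-\prod_{l=1}^{m}\beta_l=\sum_{l}(\alpha_1\cdots\alpha_{l-1})(\alpha_l-\beta_l)(\beta_{l+1}\cdots\beta_m)$, applied with all $\alpha_l=S$ (case (3)), resp.\ with all $\alpha_l=S^2$ after pairing $j$ with $-j$ to write $\prod_j(S-j)=S^{\varepsilon}\prod_i(S^2-i^2)$ for $\varepsilon=1$ ($d$ even) or $\varepsilon=0$ ($d$ odd) (case (4)), has all summands $\ge 0$, and retaining only the summand in which the largest difference $\alpha_l-\beta_l$ is placed last gives
\[
S^{d-1}-\textstyle\prod_j(S-j)\ \ge\ dS^{d-2}\quad\text{(case (3))},\qquad S^{d-1}-\textstyle\prod_j(S-j)\ \ge\ \big(\tfrac{d}{2}-1\big)^{2}S^{d-3}\quad\text{(case (4))}.
\]
Using $d!(S-c)<d!S$, what is left is $dS^{d-2}\ge d!S$, resp.\ $\big(\tfrac{d}{2}-1\big)^{2}S^{d-3}\ge d!S$, i.e.\ $S^{d-3}\ge (d-1)!$, resp.\ $S^{d-4}\ge d!/(\tfrac{d}{2}-1)^{2}$; and these are exactly the remaining summand of the hypothesis on $n$ after raising $d!n\ge(\cdots)^{(d-1)/(d-3)}$, resp.\ $(\cdots)^{(d-1)/(d-4)}$, to the reciprocal power, where $d\ge 4$, resp.\ $d\ge 5$, makes the exponent positive. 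Combining, $a>S-(d+2)$, resp.\ $a>S-(d/2+1)$.

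Parts (1)--(2) are routine once the monotonicity of $p$ and the $m=n-r$ dictionary are set up. The substance is in (3)--(4): one has to split the hypothesis on $n$ into precisely two ingredients --- the positivity bound $S\ge d$ (so that the telescoping sum has nonnegative terms) and a single power inequality on $S$ --- and one must pick the telescoping order so that \emph{one} retained term already dominates $d!(S-c)$; a coarser bound such as $\prod_j(S-j)\le(S-2)^{d-1}$ is not sharp enough. I expect the parity bookkeeping in (4) to be the most error-prone point: the pairing $(S-j)(S+j)=S^2-j^2$ leaves an extra central factor $S$ when $d$ is even and none when $d$ is odd, yet both cases must terminate at the same constant $\big(\tfrac{d}{2}-1\big)^{2}=((d-2)/2)^{2}$ multiplying $S^{d-3}$.
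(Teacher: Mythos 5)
Your proof is correct and takes essentially the same approach as the paper's: both translate the slice-rank condition into $p(n-r)\leq 0$, use strict monotonicity of $p$ on the nonnegative reals to identify $n-\slrk^\circ_{n,d}=\lfloor a\rfloor$, and bound the shifted product from above by $S^{d-2}(S-d)$ in (3), resp.\ $S^{d-3}\bigl(S^2-(d/2-1)^2\bigr)$ in (4), arriving at the same power inequalities $S^{d-3}\geq(d-1)!$ and $S^{d-4}\geq d!/(d/2-1)^2$. Your telescoping identity is merely a more structured way to justify the very product bound the paper states directly (replace all but the smallest factor with $S$, resp.\ $S^2$), so the two arguments coincide in substance.
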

\begin{proof}
Take $x=n-r$. Then we see that 
\[
r(n+1-r) \geq \binom{d + n - r}{d}
\]
holds if and only if $p(x)\leq 0$. Note that $p(0)=d!-d!n\leq0$ and that $p(x)$ is strictly increasing on $\bbR_{\geq0}$. So $p(x)$ has a unique positive root $a>0$. So $\lfloor a\rfloor$ is the maximal integer such that $p(x)\leq n$ and hence $\lfloor a\rfloor=n-\slrk^{\circ}_{n,d}$. Take $x=\sqrt[d-1]{d!n}-2>0$. Then
\[
p(x)\geq d!n-d!(n-x)=d!x>0
\]
and so $a<\sqrt[d-1]{d!n}-2$. 

Assume that the conditions of (3) holds and take $y=\sqrt[d-1]{d!n}\geq d$. Then
\[
p(y-(d+2))<y^{d-2}(y-d)-d!(n-y)=d!y-dy^{d-2}=dy((d-1)!-y^{d-3})\leq 0
\]
since $(d!n)^{d-3}\geq (d-1)!^{d-1}$ and hence $(d-1)!\leq y^{d-3}$. So $a>\sqrt[d-1]{d!n}-(d+2)$. 

Assume that the conditions of (4) holds and again take $y=\sqrt[d-1]{d!n}\geq d$. Then
\[
p\left(y-\left(d/2+1\right)\right)<\left(y+d/2-1\right)\cdots\left(y-\left(d/2-1\right)\right)-d!(n-y).
\]
Note that
\[
\left(y+d/2-1\right)\cdots\left(y-\left(d/2-1\right)\right)=\begin{cases}
y\prod_{i=1}^{d/2-1}(y^2-\left(d/2-i\right)^2)&\mbox{if $d$ is even}\\
\prod_{i=1}^{\lfloor d/2\rfloor}(y^2-\left(d/2-i\right)^2)&\mbox{if $d$ is odd}
\end{cases}
\]
and so this product is at most $y^{d-3}\left(y^2-\left(d/2-1\right)^2\right)$. Hence
\[
p\left(y-\left(d/2-1\right)\right)<y^{d-3}\left(y^2-\left(d/2-1\right)^2\right)-d!(n-y)=y\left(d!-\left(d/2-1\right)^2y^{d-3}\right)\leq 0
\]
since $d!\leq\left(d/2-1\right)^2y^{d-3}$. So $a> \sqrt[d-1]{d!n}-\left(d/2 +1\right)$. 
\end{proof}

\subsection*{Proof of the first lemma}

Assume that $n>N$. If $d\leq 5$, then we have chosen $N$ such that the condition of Lemma~\ref{lemma:appendix1}(3) holds and we take $w:=d+2$. Otherwise, we have chosen $N$ such that the condition of Lemma~\ref{lemma:appendix1}(4) holds and we take $w:=d/2+1$. We first prove the statements $\calA^{(j)}_{n,d}(m,\ell_2,\ldots,\ell_j)$.

Let $3\leq j\leq \lfloor d/2\rfloor$, $m\leq n$, $\ell_2,\ldots,\ell_{j-1}\geq0$ and $\ell_j>0$ be integers such that 
\[
m-(\ell_2 +\ldots + \ell_j) \geq n-\slrk_{n,d}^\circ + 1
\]
holds and take
\[
g^{(j)}_{n,d}(m,\ell_2,\ldots,\ell_j):=f_{n,d}(m,\ell_2,\ldots,\ell_j,0,\ldots,0)- f_{n,d}(m,\ell_2,\ldots,\ell_{j-2},\ell_{j-1}+1,\ell_j-1,0,\ldots,0){-1};
\]
view $g^{(j)}_{n,d}$ as a polynomial in $m,\ell_2,\ldots,\ell_j$. Note that it has degree $d-(j-1)$ and that its homogeneous part of top degree equals $m^{d-(j-1)}/(d-(j-1))!$. 

\begin{lemma}\label{lemma:appendix2}
Write $g^{(j)}_{n,d}=\sum_{i,\alpha}c_{i,\alpha}m^i\ell^\alpha$. Take
\[
\tilde{c}_{i,\alpha}=\begin{cases}0&\mbox{if $c_{i,\alpha}>0$ and $\alpha\neq0$}\\
c_{i,\alpha}&\mbox{otherwise}\\
\end{cases}
\]
and $\tilde{g}^{(j)}_{n,d}=\sum_{i,\alpha}\tilde{c}_{i,\alpha}m^{i+\alpha_2+\ldots+\alpha_j}$. Then 
\[
g^{(j)}_{n,d}(m,\ell_2,\ldots,\ell_j)\geq \tilde{g}^{(j)}_{n,d}(m)
\]
for all $0\leq\ell_2,\ldots,\ell_j\leq m$.
\end{lemma}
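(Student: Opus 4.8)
The plan is to reduce the claim to a monomial-by-monomial comparison, so that the inequality becomes a routine consequence of the signs of the coefficients $c_{i,\alpha}$ and of the constraint $0\le\ell_k\le m$; no estimate on $n$, $d$, or $\slrk^\circ_{n,d}$ enters at this stage.

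First I would note that the hypothesis $0\le\ell_2\le m$ already forces $m\ge0$. Write $g^{(j)}_{n,d}=\sum_{i,\alpha}c_{i,\alpha}\,m^i\ell^\alpha$, where $\alpha=(\alpha_2,\dots,\alpha_j)\in\bbZ_{\geq0}^{j-1}$, $\ell^\alpha:=\ell_2^{\alpha_2}\cdots\ell_j^{\alpha_j}$, and $|\alpha|:=\alpha_2+\cdots+\alpha_j$, so that $\tilde g^{(j)}_{n,d}(m)=\sum_{i,\alpha}\tilde c_{i,\alpha}\,m^{i+|\alpha|}$ and the $(i,\alpha)$-term of $\tilde g^{(j)}_{n,d}(m)$ is $\tilde c_{i,\alpha}\,m^{i+|\alpha|}$. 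Then, for each fixed pair $(i,\alpha)$, I would compare $c_{i,\alpha}\,m^i\ell^\alpha$ with $\tilde c_{i,\alpha}\,m^{i+|\alpha|}$ in three cases. If $\alpha=0$, then $\tilde c_{i,0}=c_{i,0}$ and $|\alpha|=0$, so the two are literally equal. If $\alpha\neq0$ and $c_{i,\alpha}>0$, then $\tilde c_{i,\alpha}=0$ while $c_{i,\alpha}\,m^i\ell^\alpha\ge0$ since $m,\ell_2,\dots,\ell_j\ge0$, so the first quantity is at least the second. If $\alpha\neq0$ and $c_{i,\alpha}\le0$, then $\tilde c_{i,\alpha}=c_{i,\alpha}$, and since $0\le\ell_k\le m$ gives $\ell_k^{\alpha_k}\le m^{\alpha_k}$ for each $k$, we get $0\le\ell^\alpha\le m^{|\alpha|}$ and hence $m^i\ell^\alpha\le m^{i+|\alpha|}$; multiplying through by the nonpositive number $c_{i,\alpha}$ reverses this to $c_{i,\alpha}\,m^i\ell^\alpha\ge c_{i,\alpha}\,m^{i+|\alpha|}$. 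In every case the $(i,\alpha)$-contribution to $g^{(j)}_{n,d}(m,\ell_2,\dots,\ell_j)$ is at least the $(i,\alpha)$-contribution to $\tilde g^{(j)}_{n,d}(m)$, and summing over all pairs $(i,\alpha)$ yields $g^{(j)}_{n,d}(m,\ell_2,\dots,\ell_j)\ge\tilde g^{(j)}_{n,d}(m)$.

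I do not expect a genuine obstacle here: the statement is a purely formal monotonicity fact, and the only thing one must keep straight is precisely which monomials survive the passage from $c_{i,\alpha}$ to $\tilde c_{i,\alpha}$ — namely the pure powers of $m$ and the monomials with nonpositive coefficient. The role of the lemma is to replace the awkward multivariate polynomial $g^{(j)}_{n,d}$ by the one-variable lower bound $\tilde g^{(j)}_{n,d}(m)$, whose leading term is still the positive monomial $m^{d-(j-1)}/(d-(j-1))!$ inherited from $g^{(j)}_{n,d}$; the real work then is to show, using this lower bound together with the estimates on $n-\slrk^\circ_{n,d}$ from Lemma~\ref{lemma:appendix1} and the choice of $N$, that $\tilde g^{(j)}_{n,d}(m)\ge0$ for $n>N$, which is the business of Lemma~\ref{lemma:final1}.
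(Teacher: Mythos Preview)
Your proof is correct and is exactly the paper's approach: the paper states in one line that $c_{i,\alpha}m^i\ell^\alpha\geq\tilde{c}_{i,\alpha}m^{i+\alpha_2+\ldots+\alpha_j}$ for all $i,\alpha$, and you have simply unpacked this into the three obvious cases. The additional commentary on the lemma's role in the argument is accurate as well.
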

\begin{proof}
This follows from the fact that $c_{i,\alpha}m^i\ell^\alpha\geq\tilde{c}_{i,\alpha}m^{i+\alpha_2+\ldots+\alpha_j}$ for all $i,\alpha$.
\end{proof}

We have $m-(\ell_2 +\ldots + \ell_{\left\lfloor d/2 \right\rfloor}) \geq n-\slrk^\circ_{n,d}+1\geq 1$. Hence, by Lemma \ref{lemma:appendix2}, to prove that $g^{(j)}_{n,d}$ is positive, it suffices to prove that $\tilde{g}^{(j)}_{n,d}$ is positive. Note that 
\[
\tilde{g}^{(j)}_{n,d}(x) := \sum_{i,\alpha}\widetilde{c}_{i,\alpha}x^{i+\alpha_2+\ldots+\alpha_j}
\]
has degree $d-j+1$ and that its top coefficient equals $1/(d-j+1)!>0$. So, $\tilde{g}^{(j)}_{n,d}(x)\rightarrow\infty$ as $x\rightarrow\infty$. In particular, we deduce that $g^{(j)}_{n,d}$ is positive whenever $m$ is bigger than the biggest real root $x^{(j)}_{*}$ of $\tilde{g}^{(j)}_{n,d}(x)$. 
Let $a$ be as in Lemma~\ref{lemma:appendix1}.
Since $\ell_j>0$ and $m-(\ell_2 +\ldots + \ell_j) \geq n-\slrk_{n,d}^\circ + 1$, we have 
\[
m\geq  n-\slrk^{\circ}_{n,d}+2=\lfloor a\rfloor + 2\geq \sqrt[d-1]{d!n}-w+1.
\]
We have chosen $N$ such that $N\geq (x^{(j)}_{*}+w-1)^{d-1}/d!$. So since $n>N$, it follows that $m \geq x^{(j)}_{*}$. So the statements $\calA^{(j)}_{n,d}(m,\ell_2,\ldots,\ell_j)$ hold.

\begin{example}
For $d=6$ and $j=3$, we see that $f_{n,d}(m,\ell,\ell')+(n-m)(m+1)$ equals
\[
\binom{m+6}{6}-\ell\binom{m+2}{2}-\ell\binom{m+4}{4}-2\ell'\binom{m+3}{3}+\binom{\ell}{2}\binom{m+2}{2}-\binom{\ell}{3}+2\ell\ell'+\ell^2+\binom{2\ell'}{2}
\]
and
\begin{align*}
g^{(j)}_{n,d}(m,\ell,\ell')&:=f_{n,d}(m,\ell,\ell')-f_{n,d}(m,\ell+1,\ell'-1){-1}\\
&=\frac{1}{24}m^4 + \frac{1}{12}m^3 - \frac{1}{2}m^2\ell - \frac{1}{24}m^2 - \frac{3}{2}m\ell + \frac{1}{2}\ell^2 - \frac{1}{12}m - \frac{3}{2}\ell + 2\ell' {- 3}.
\end{align*}
So we get
\[
\tilde{g}^{(j)}_{n,d}(m)=\frac{1}{24}m^4 + \frac{1}{12}m^3 - \frac{1}{2}m^3 - \frac{1}{24}m^2 - \frac{3}{2}m^2 + 0 - \frac{1}{12}m - \frac{3}{2}m + 0 {- 3}.
\]
One can verify numerically that this polynomial has highest root $13.0 < x^{(3)}_{*}<13.1$.\exampleend
\end{example}

Next, we prove the statements $\calB_{n,d}(m,\ell)$. 

Let $m\leq n$ and $\ell>0$ be integers such that $m-\ell\geq n-\slrk_{n,d}^\circ + 1$ holds and write
\[
f_{n,d}(m,\ell,0,\ldots,0)-f_{n,d}(m-1,\ell-1,0\ldots,0) -1 =g_{n,d}(m,\ell)-n;
\]
view $g_{n,d}$ as a polynomial in $m,\ell$. Note that $n\leq (m+w-1)^{d-1}/d!$ and again construct $\tilde{g}_{n,d}$ from~$g_{n,d}$ as in Lemma~\ref{lemma:appendix2}. Then we see that
\[
f_{n,d}(m,\ell,0,\ldots,0) > f_{n,d}(m-1,\ell-1,0\ldots,0)
\] 
holds when $\tilde{g}_{n,d}(m)- (m+w-1)^{d-1}/d!\geq0$. Both $\tilde{g}_{n,d}(x)$ and $(x+w-1)^{d-1}/d!$ have degree $d-1$ in $x$. The leading coefficient of $\tilde{g}_{n,d}$ equals $1/(d-1)!$ and the leading coefficient of $(m+w-1)^{d-1}/d!$ equals $1/d!$. It follows that $\tilde{g}_{n,d}(x)- (x+w-1)^{d-1}/d!\to\infty$ as $x\to\infty$. Similar to before, we have chosen $N$ such that the statements $\calB_{n,d}(m,\ell)$ hold. More precisely, we have $N\geq (x^*+w-1)^{d-1}/d!$ where $x_*$ is the biggest real root of $\tilde{g}_{n,d}(x)- (x+w-1)^{d-1}/d!$. This finishes the proof of Lemma~\ref{lemma:final1}.

\begin{example}
For $d=4$, we have
\[
f_{n,d}(m,\ell) = \binom{m+4}{4} - 2\ell\binom{m+2}{2}+\binom{2\ell}{2}-(n-m)(m+1) 
\]
and
\begin{align*}
g_{n,d}(m,\ell)&:=f_{n,d}(m,\ell)-f_{n,d}(m-1,\ell-1) -1+n\\
&=\frac{1}{6}m^3 - 2m\ell + \frac{17}{6}m + 2\ell - {3}.
\end{align*}
So we get
\[
\tilde{g}_{n,d}(x)-(x+w-1)^{d-1}/d!=\left(\frac{1}{6}x^3 - 2x^2 + \frac{17}{6}x+0 - {3}\right)-\frac{(x+5)^{3}}{24}.
\]
One can verify numerically that this polynomial has highest root $21.2 < x_*<21.3$.\exampleend
\end{example}

\subsection*{Proof of the second lemma}

Assume that $d\leq 10$. Note that, in principle, {we can verify that Lemma~\ref{lemma:final2} holds} with the support of algebra software in finite time. Below, we study the inequalities that we need to check in more detail to make this finite time more manageable.

For all integers $n\geq1$, we need to prove that
\begin{equation}\label{eq:blabla}
f_{n,d}(m,\ell_2,\ldots,\ell_{\lfloor d/2\rfloor}) \geq f_{n,d}(m-(\ell_2 + \ldots + \ell_{\lfloor d/2\rfloor}),0,\ldots,0)
\end{equation}
for all integers $m\leq n$ and $\ell_2,\ldots,\ell_{\lfloor d/2\rfloor}\geq0$ with $m-(\ell_2 +\ldots + \ell_{\lfloor d/2\rfloor}) \geq n-\slrk_{n,d}^\circ + 1$. {We also need to show that this inequality holds strictly when $\ell_2+\ldots+\ell_{\lfloor d/2\rfloor}>0$ and $(n,d,m-\ell_2)\neq(3,4,1)$.} We already know that the inequality holds {strictly} when $n>N$. The following lemma shows that for fixed $m,\ell_2,\ldots,\ell_{\lfloor d/2\rfloor}$ we only need to check this inequality for the highest $n$ such that $m-(\ell_2 +\ldots + \ell_{\lfloor d/2\rfloor}) \geq n-\slrk_{n,d}^\circ + 1$.

\begin{remark}\label{rmk:non-decreasing}
Note that $\slrk_{n,d}^\circ\leq\slrk_{n+1,d}^\circ\leq\slrk_{n,d}^\circ+1$ for every $n\geq 1$ since every polynomial in $n+1$ variables is also a polynomial in $n+2$ variables and every polynomial in $n+2$ variables can be written as the sum of a polynomial in $n+1$ variables and a multiple the remaining variable. In particular, the expression $n-\slrk_{n,d}^\circ + 1$ is a non-decreasing function of $n$.\remarkend
\end{remark}

\begin{lemma}
Let $m\leq n$ and $\ell_2,\ldots,\ell_{\lfloor d/2\rfloor}\geq0$ be integers with 
\[
m-(\ell_2 +\ldots + \ell_{\lfloor d/2\rfloor}) \geq (n+1)-\slrk_{n+1,d}^\circ + 1
\]
and suppose that
\[
f_{n+1,d}(m,\ell_2,\ldots,\ell_{\lfloor d/2\rfloor}) \geq f_{n+1,d}(m-(\ell_2 + \ldots + \ell_{\lfloor d/2\rfloor}),0,\ldots,0)
\]
holds. Then (\ref{eq:blabla}) holds as well. {If in addition $\ell_2+\ldots+\ell_{\lfloor d/2\rfloor}>0$, then (\ref{eq:blabla}) holds strictly.}
\end{lemma}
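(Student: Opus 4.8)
The plan rests on a single structural observation: in the definition of $f_{n,d}(m,\ell_2,\ldots,\ell_{\lfloor d/2\rfloor})$, the integer $n$ enters only through the summand $-(n-m)(m+1)$, whereas the sum over $\beta$ is completely independent of $n$. So, for any fixed $m,\ell_2,\ldots,\ell_{\lfloor d/2\rfloor}$, I would first record the identity
\[
f_{n,d}(m,\ell_2,\ldots,\ell_{\lfloor d/2\rfloor}) = f_{n+1,d}(m,\ell_2,\ldots,\ell_{\lfloor d/2\rfloor}) + (m+1),
\]
obtained by comparing $-(n-m)(m+1)$ with $-(n+1-m)(m+1)$. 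Applying the same identity with $m$ replaced by $m':=m-(\ell_2+\ldots+\ell_{\lfloor d/2\rfloor})$ and all remaining entries set to $0$ gives $f_{n,d}(m',0,\ldots,0) = f_{n+1,d}(m',0,\ldots,0) + (m'+1)$.

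Subtracting the two identities and writing $s := \ell_2+\ldots+\ell_{\lfloor d/2\rfloor}$, so that $m-m'=s$, I would obtain
\[
f_{n,d}(m,\ell_2,\ldots,\ell_{\lfloor d/2\rfloor}) - f_{n,d}(m',0,\ldots,0) = \bigl(f_{n+1,d}(m,\ell_2,\ldots,\ell_{\lfloor d/2\rfloor}) - f_{n+1,d}(m',0,\ldots,0)\bigr) + s.
\]
The hypothesis of the lemma is precisely that the bracketed term is $\geq 0$; hence the left-hand side is $\geq s \geq 0$, which is (\ref{eq:blabla}). If in addition $s>0$, the same estimate shows the left-hand side is $\geq s>0$, i.e.\ the strict inequality.

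It then remains to check that what we have proved is a genuine instance of (\ref{eq:blabla}) in the setting of Lemma~\ref{lemma:final2}, i.e.\ that $m\leq n$ (given) and $m-s\geq n-\slrk_{n,d}^\circ+1$. The latter follows from the assumption $m-s\geq(n+1)-\slrk_{n+1,d}^\circ+1$ together with Remark~\ref{rmk:non-decreasing}, which gives $n-\slrk_{n,d}^\circ+1\leq(n+1)-\slrk_{n+1,d}^\circ+1$. There is no real obstacle here; the only points requiring care are isolating the $n$-dependence of $f_{n,d}$ correctly and keeping track of the shift $m\mapsto m-s$ when passing to the second summand, both of which are a one-line computation.
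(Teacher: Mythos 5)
Your proof is correct and takes essentially the same route as the paper's: both isolate the fact that $n$ enters $f_{n,d}$ only through the linear term $-(n-m)(m+1)$, so the difference $f_{n,d}(m,\ell_2,\ldots)-f_{n,d}(m-s,0,\ldots,0)$ is, as a function of $n$, linear with slope $-s$, and hence decreasing $n$ by one increases the difference by exactly $s=\ell_2+\ldots+\ell_{\lfloor d/2\rfloor}\geq 0$. Your extra remark about checking $m-s\geq n-\slrk_{n,d}^\circ+1$ via Remark~\ref{rmk:non-decreasing} is not needed to prove the stated inequality but correctly explains why the lemma fits into the downward induction from $n=N+1$.
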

\begin{proof}
Fix $m,\ell_2,\ldots,\ell_{\lfloor d/2\rfloor}$ and view 
\[
f_{n,d}(m,\ell_2,\ldots,\ell_{\lfloor d/2\rfloor}) - f_{n,d}(m-(\ell_2 + \ldots + \ell_{\lfloor d/2\rfloor}),0,\ldots,0).
\]
as a function of $n$. Note that
\begin{align*}
f_{n,d}(m,\ell_2,\ldots,\ell_{\lfloor d/2\rfloor})&= c - n(m+1),\\
f_{n,d}(m-(\ell_2 + \ldots + \ell_{\lfloor d/2\rfloor}),0,\ldots,0)&={c'}-n(m-(\ell_2 + \ldots + \ell_{\lfloor d/2\rfloor})+1)
\end{align*}
for some constants $c,{c'}$ (depending on only $m,\ell_2,\ldots,\ell_{\lfloor d/2\rfloor}$) and have difference $c-{c'}-n(\ell_2 + \ldots + \ell_{\lfloor d/2\rfloor})$. By assumption, we have $c-{c'}-(n+1)(\ell_2 + \ldots + \ell_{\lfloor d/2\rfloor})\geq0$ and hence $c-{c'}-n(\ell_2 + \ldots + \ell_{\lfloor d/2\rfloor})\geq0$ holds as well since $\ell_2,\ldots,\ell_{\lfloor d/2\rfloor}\geq0$. {If $\ell_2+\ldots+\ell_{\lfloor d/2\rfloor}>0$, then the latter holds strictly.}
\end{proof}

Using induction from $n=N+1$ going down, the lemma shows that it suffices to check that (\ref{eq:blabla}) holds for all integers $m\leq n$ and $\ell_2,\ldots,\ell_{\lfloor d/2\rfloor}\geq0$ with 
\[
n-\slrk_{n,d}^\circ + 1\leq m-(\ell_2 +\ldots + \ell_{\lfloor d/2\rfloor})<(n+1)-\slrk_{n+1,d}^\circ + 1 .
\]
By Remark \ref{rmk:non-decreasing}, we see that 
\[
n-\slrk_{n,d}^\circ + 1<(n+1)-\slrk_{n+1,d}^\circ + 1 .
\]
if and only if $\slrk_{n,d}^\circ=\slrk_{n+1,d}^\circ$. Hence, we are left with the following cases.

\begin{claim}\label{claim:last}
Let $n\leq N$ be an integer such that $\slrk_{n,d}^\circ=\slrk_{n+1,d}^\circ$ and let $m\leq n$ and $\ell_2,\ldots,\ell_{\lfloor d/2\rfloor}\geq0$ be integers with $m-(\ell_2 +\ldots + \ell_{\lfloor d/2\rfloor}) = n-\slrk_{n,d}^\circ + 1$. Then 
\[
f_{n,d}(m,\ell_2,\ldots,\ell_{\lfloor d/2\rfloor}) \geq f_{n,d}(n-\slrk_{n,d}^\circ + 1,0,\ldots,0)
\]
holds. {If $\ell_2+\ldots+\ell_{\lfloor d/2\rfloor}>0$ and $(n,d)\neq (3,4)$, then the inequality holds strictly.}
\end{claim}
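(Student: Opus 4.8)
The plan is to combine the descent inequalities $\calA^{(j)}_{n,d}$ and $\calB_{n,d}$ from the proof of Lemma~\ref{lemma:final1} with a genuinely finite verification. The first observation to record is that, in the relevant regime, these inequalities are independent of $n$: an $\calA$-move fixes $m$, so the terms $-(n-m)(m+1)$ cancel and the polynomial $g^{(j)}_{n,d}$ does not depend on $n$; the argument of Lemma~\ref{lemma:final1} --- which bounds $g^{(j)}_{n,d}$ below by a one-variable polynomial $\tilde g^{(j)}_{n,d}(m)$ with positive leading coefficient --- therefore shows that $\calA^{(j)}_{n,d}(m,\ell_2,\ldots,\ell_j)$ holds, whenever it is defined, for \emph{every} $n$ as soon as $m\geq x^{(j)}_*$, the largest real root of $\tilde g^{(j)}_{n,d}$, which is an absolute constant depending only on $d$ and $j$. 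Likewise $g_{n,d}$ is independent of $n$; the statement $\calB_{n,d}(m,\ell)$ amounts to $g_{n,d}(m,\ell)>n$, and the bounds $g_{n,d}(m,\ell)\geq\tilde g_{n,d}(m)\geq(m+w-1)^{d-1}/d!$ --- valid for $m\geq x^{\calB}_*$, the largest root of $\tilde g_{n,d}(x)-(x+w-1)^{d-1}/d!$ --- together with $(m+w-1)^{d-1}/d!\geq N\geq n$ for $m\geq\sqrt[d-1]{d!\,N}-w+1$, show that $\calB_{n,d}(m,\ell)$ holds for every $n\leq N$ once $m\geq X_{\calB}:=\max\{x^{\calB}_*,\ \sqrt[d-1]{d!\,N}-w+1\}$. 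Set $X:=\max\{x^{(3)}_*,\ldots,x^{(\lfloor d/2\rfloor)}_*,\ X_{\calB}\}$, an explicit number for each $d\leq10$.

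Next I would run the descent. If $\ell_2+\ldots+\ell_{\lfloor d/2\rfloor}=0$ the statement is a trivial equality; otherwise write $m_0:=n-\slrk_{n,d}^\circ+1=m-(\ell_2+\ldots+\ell_{\lfloor d/2\rfloor})$ and suppose first that $m\geq X$. Starting from the highest nonzero index and working downward, apply $\calA^{(j)}_{n,d}$-moves pushing all of $\ell_j$ onto index $j-1$: each such move keeps $m$ fixed (so it is valid, as $m\geq X\geq x^{(j)}_*$, and the running constraint $m-(\ell_2+\ldots+\ell_j)\geq m_0$ is maintained, with equality) and strictly decreases $f_{n,d}$, and the configuration ends up at $(m,\ \ell_2+\ldots+\ell_{\lfloor d/2\rfloor},\ 0,\ldots,0)$. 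Then apply $\calB_{n,d}$-moves, each lowering $m$ and the second entry by one; they are strict and valid as long as the current $m$ is $\geq X$, and the difference ``(second entry)$\,-m_0$'' stays nonnegative. When $m_0\geq X$ this chain reaches $(m_0,0,\ldots,0)$, giving the claim with strict inequality; when $m_0<X$ it reaches some $(m',m'-m_0,0,\ldots,0)$ with $m'\leq X$, and then the claim (still strict) follows once $f_{n,d}(m',m'-m_0,0,\ldots,0)\geq f_{n,d}(m_0,0,\ldots,0)$ is known.

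It remains to deal, by a finite computation, with the configurations having $m\leq X$ --- the leftovers of the descent together with the configurations that already had $m<X$. Here $m_0\leq m\leq X$, so $v:=n-\slrk_{n,d}^\circ=m_0-1$ satisfies $0\leq v<X$. By Remark~\ref{rmk:non-decreasing} the map $n\mapsto n-\slrk_{n,d}^\circ$ is non-decreasing and increases by at most $1$ at each step; hence for each such $v$ there is a unique largest $n=n_v$ with $n-\slrk_{n,d}^\circ=v$, and, reading off the formula for $\slrk_{n,d}^\circ$, the equality $\slrk_{n,d}^\circ=\slrk_{n+1,d}^\circ$ from the hypothesis of the claim holds exactly at the integers $n=n_v$. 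Since $n\leq N$ is also part of the hypothesis, Claim~\ref{claim:last} reduces to verifying, for the finitely many integers $n\leq N$ of the form $n_v$ with $v<X$ and, for each, for the finitely many tuples $(m,\ell_2,\ldots,\ell_{\lfloor d/2\rfloor})$ with $m\leq X$ and $m-(\ell_2+\ldots+\ell_{\lfloor d/2\rfloor})=v+1$, the inequality $f_{n,d}(m,\ell_2,\ldots,\ell_{\lfloor d/2\rfloor})\geq f_{n,d}(v+1,0,\ldots,0)$, strictly when $\ell_2+\ldots+\ell_{\lfloor d/2\rfloor}>0$ and $(n,d)\neq(3,4)$; this is a bounded computation carried out with algebra software. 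The case $(n,d)=(3,4)$ occurs as $n_0$ for $d=4$ (one has $\slrk_{3,4}^\circ=3=\slrk_{4,4}^\circ$, so $m_0=1$), and there one checks directly that the non-strict inequality holds, with equality exactly at $m=2$, i.e.\ at $(n,d,m-\ell_2)=(3,4,1)$.

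The part I expect to be the real work is keeping $X$ --- and hence the list of triples $(n_v,m,\ell_2,\ldots)$ --- small enough for the computation to be feasible (for $d=9,10$ the numbers $N$ are large, so the relevant size is $X\sim\sqrt[d-1]{d!\,N}$), which means choosing the auxiliary polynomials $\tilde g$ and the parameter $w$ sharply; and making sure the descent is organized so that no intermediate $\calB$-move overshoots $m_0$ and every intermediate configuration still satisfies the hypotheses $m'\leq n$ and $m'-(\ell_2'+\ldots+\ell_j')\geq m_0$ under which $\calA^{(j)}_{n,d}$ and $\calB_{n,d}$ were defined.
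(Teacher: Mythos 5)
Your proposal takes essentially the same route as the paper: apply the $\calA^{(j)}$ and $\calB$ descent moves to reduce to configurations with $m$ bounded by an explicit constant, then verify the remaining finite list by a computer check, using that the hypotheses $\slrk_{n,d}^\circ = \slrk_{n+1,d}^\circ$ and $m - (\ell_2+\ldots+\ell_{\lfloor d/2\rfloor}) = n - \slrk_{n,d}^\circ + 1$ pin down $n$ given $m-(\ell_2+\ldots+\ell_{\lfloor d/2\rfloor})$. You are more explicit than the paper's rather terse ``As in the proof of the first lemma\ldots'' about the threshold needed so that the $\calB$-move stays valid for all $n\leq N$ (your $X_{\calB}$ includes the $\sqrt[d-1]{d!\,N}-w+1$ term, which is indeed needed since $g_{n,d}(m,\ell)>n$, not merely $>0$, must hold), but by the way $N$ is chosen these thresholds agree with the paper's $\max(x^{(3)}_*,\ldots,x_*)$. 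One small inaccuracy: for $(n,d)=(3,4)$ with $m_0=1$ the equality $f_{3,4}(m,\ell_2)=f_{3,4}(1,0)=1$ holds at both $m=2$ and $m=3$, not only at $m=2$; your final formulation $(n,d,m-\ell_2)=(3,4,1)$ is nevertheless the correct description of the exceptional set and matches Remark~\ref{rmk:one-maximal}.
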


\begin{remark}
By Lemma~\ref{lemma:appendix1}, we know that $\slrk_{n,d}^\circ\approx n-\sqrt[d-1]{d!n}$. By Remark \ref{rmk:non-decreasing},
it follows that the number of $n\leq N$ such that $\slrk_{n,d}^\circ=\slrk_{n+1,d}^\circ$ is around $\sqrt[d-1]{d!N}\ll N$. In particular, it is not efficient to check the condition $\slrk_{n,d}^\circ=\slrk_{n+1,d}^\circ$ for every $n\leq N$.\remarkend
\end{remark}

As in the prove of the first lemma, we know that the statements $\calA^{(j)}_{n,d}(m,\ell_2,\ldots,\ell_j)$ and the statements $\calB_{n,d}(m,\ell)$ hold when $m\geq\max(x^{(3)}_{*},\ldots,x^{(\lfloor d/2\rfloor)}_{*},x_*)$. So the cases where this holds reduce to the cases where $m\leq\lfloor\max(x^{(3)}_{*},\ldots,x^{(\lfloor d/2\rfloor)}_{*},x_*)\rfloor$. Note that the replacements of $(m,\ell_1,\ldots,\ell_{\lfloor d/2\rfloor})$ that occur here do not change $m-(\ell_2 +\ldots + \ell_{\lfloor d/2\rfloor})$. Under this addition condition, we checked the claim for $d \leq 10$ using a combination of \verb|SAGE| and \verb|NumPy|. The files containing the code used are available as ancillary files of the arXiv version and on the personal webpage of the first author. The running time required was less than two minutes on a laptop. We found that the claim holds and that concludes the proof of Lemma \ref{lemma:final2}.
\newcommand{\etalchar}[1]{$^{#1}$}


\begin{thebibliography}{99999999}

\bibitem[AH20a]{AH:Stillman}
T. Ananyan, M. Hochster,
\newblock {\em Small subalgebras of polynomial rings and Stillman's conjecture},
\newblock J. Amer. Math. Soc. 33 (2020), no.~1, pp. 291--309. 

\bibitem[AH20b]{AH:StillmanBounds}
T. Ananyan, M. Hochster,
\newblock {\em Strength conditions, small subalgebras, and Stillman bounds in degree $\leq4$},
\newblock Trans. Am. Math. Soc. 373 (2020), no.~7, pp. 4757--4806. 

\bibitem[Ani86]{An:Froberg}
D.J. Anick,
\newblock {\em Thin algebras of embedding dimension three},
\newblock J. Algebra 100 (1986), no.~1, pp. 235--259.

\bibitem[BBO{\etalchar{+}}20]{BBOV:notClosed}
E. Ballico, A. Bik, A. Oneto, E. Ventura,
\newblock {\em The set of forms with bounded strength is not closed},
\newblock arXiv preprint 2012.01237, 2020.

\bibitem[BV20]{BV:lineBundles}
E. Ballico, E. Ventura,
\newblock {\em The strength for line bundles},
\newblock Math. Scand., \textit{to appear}. Available as arXiv preprint arXiv:2004.01586.


\bibitem[BDE19]{BDE:strength}
A. Bik, J. Draisma, R.H. Eggermont,
\newblock {\em Polynomials and tensors of bounded strength},
\newblock Commun. Contemp. Math. 21 (2019), no. 7, 1850062 (24 pages).


\bibitem[BCC{\etalchar{+}}17]{B+17:CapSets}
J. Blasiak, T. Church, H. Cohn, J.A. Grochow, E. Naslund, W.F. Sawin, C. Umans,
\newblock {\em On cap sets and the group-theoretic approach to matrix multiplication},
\newblock Discrete Anal. 2017, no. 3, 27 pages.

\bibitem[BFL18]{BFL18:surveyHF}
M. Boij, R. Fr\"oberg, S. Lundqvist
\newblock {\em Questions and conjectures on extremal Hilbert series},
\newblock Rev. Un. Mat. Argentina 59 (2018), no.~2.

\bibitem[Bor90]{Bor90:DeformingVarieties}
C. Borcea,
\newblock {\em Deforming varieties of $k$-planes of projective complete intersections},
\newblock Pacific J. Math. 143 (1990), no.~1, pp. 25--36.

\bibitem[CCG08]{CCG08:CIinHypersurfaces}
E. Carlini, L. Chiantini, A. Geramita,
\newblock {\em Complete intersections on general hypersurfaces},
\newblock Michigan Math. J. 57 (2008), pp. 121--136.

\bibitem[CGG{\etalchar{+}}19]{C+19:SecantReducible}
M.V. Catalisano, A.V. Geramita, A. Gimigliano, B. Harbourne, J. Migliore, U. Nagel, Y.S. Shin,
\newblock {\em Secant varieties of the varieties of reducible hypersurfaces in $\mathbb{P}^n$},
\newblock J. Algebra 528 (2019), pp. 381--438.

\bibitem[CLP17]{CLP17:Progression}
E. Croot, V.F. Lev, P.P. Pach.
\newblock {\em Progression-free sets in $\mathbb{Z}_4^n$ are exponentially small},
\newblock Ann. Math. 185 (2017), no.~1, pp. 331--337.

\bibitem[CZ19]{CZ19:LinesConics}
C. Ciliberto, M. Zaidenberg,
\newblock {\em Lines, conics, and all that},
\newblock arXiv preprint 1910.11423.

\bibitem[DES17]{DES17:TopologicalCubic}
H. Derksen, R.H. Eggermont, A. Snowden,
\newblock {\em Topological noetherianity for cubic polynomials},
\newblock Algebra \& Number Theory 11 (2017), no.~9, pp. 2197--2212.

\bibitem[EG17]{EG17:LargeSubsets}
J.S. Ellenberg, D. Gijswijt,
\newblock {\em On large subsets of with no three-term arithmetic progression},
\newblock Ann. Math. 185 (2017), no.~1, pp. 339--343.

\bibitem[ESS20]{ESS:Hartshorne}
D. Erman, S.V. Sam, A. Snowden
\newblock {\em Strength and Hartshorne's Conjecture in high degree},
\newblock Math. Z (2020).

\bibitem[Fr{\"o}85]{Fr85:HilbertSeriesGradedAlg}
R. Fr{\"o}berg,
\newblock {\em An inequality for Hilbert series of graded algebras},
\newblock Mathematica Scandinavica 56 (1985), no.~2, pp.~117--144.

\bibitem[Har92]{Harris:First}
J. Harris,
\newblock {\em Algebraic Geometry: A First Course}.
\newblock GTM 133, Springer-Verlag, 1992.

\bibitem[HL87]{HL87:LinearSyzygies}
M. Hochster, D. Laksov,
\newblock {\em The linear syzygies of generic forms},
\newblock Comm. Algebra 15 (1987), no.~1-2, pp. 227--239.

\bibitem[HLV19]{HLV19:CayleySalmon}
M.A. Hahn, S. Lamboglia, A. Vargas,
\newblock {\em A short note on Cayley-Salmon equations},
\newblock Le Matematiche 75.2 (2020): 559-574.

\bibitem[Iar82]{Ia:Compressed}
A. Iarrobino,
\newblock {\em Compressed algebras: Artin algebras having given socle degrees and maximal length},
\newblock Trans. of AMS 285 (1984), no.~1, pp.~337--378.

\bibitem[Iar97]{Ia97:InverseSystem3_ThinAlgebras}
A. Iarrobino,
\newblock {\em Inverse system of a symbolic power {III}: thin algebras and fat points},
\newblock Compos. Math. 108 (1997), no.~3, pp.~319--356.

\bibitem[KZ18]{KZ:rankspoly}
D. Kazhdan, T. Ziegler,
\newblock {\em  On ranks of polynomials},
\newblock Algebras and Representation Theory 21 (2018), pp. 1017--1021.

\bibitem[Nen17]{Ne17}
G. Nenashev,
\newblock {\em A note on Fr{\"o}berg's conjecture for forms of equal degrees},
\newblock Compt. Rendus Math. 355 (2017), no.~3, pp.~272--276.

\bibitem[Sta06]{Starr}
J.M. Starr,
\newblock {\em A fact about linear spaces on hypersurfaces},
\newblock Proc. Lond. Math. Soc. 93 (2006), no. 2, pp. 301--303.


\bibitem[Sza96]{Sz96:CompleteIntersections}
E. Szab\`o,
\newblock{\em Complete intersection subvarieties of general hypersurfaces},
\newblock Pacific J. of Math. 175 (1996), no.~1, pp.~271--294.

\bibitem[Ter11]{Ter11}
A. Terracini,
\newblock {\em Sulle $V_k$ per cui la variet{\`a} degli $S_h$ $(h+1)$-seganti ha dimensione minore dell'ordinario},
\newblock Rendiconti del Circolo Matematico di Palermo (1884-1940) 31 (1911), no.~1, pp.~392--396.

\bibitem[TS16]{ST16:blog}
T. Tao, W.F. Sawin,
\newblock {\em Notes on the ``slice rank'' of tensors},
\newblock \verb|https://terrytao.wordpress.com/2016/08/24/notes| \verb|-on-the-slice-rank-of-tensors/|, 2016.

\end{thebibliography}
\end{document}